\documentclass[11pt,a4paper,reqno]{amsart}
\usepackage{vmargin,color}
\usepackage[latin1]{inputenc}
\usepackage{amsmath,amsfonts,amsthm,epsfig,graphicx,amssymb}
\usepackage{esint}
\usepackage{caption}
\newcommand{\A}{\mathcal A}

\newcommand{\E}{\mathcal{E}}
\newcommand{\F}{\mathcal F}

\renewcommand{\H}{\mathcal{H}}

\newcommand{\Sc}{\mathcal S}

\newcommand{\ee}{\mathrm{e}}

\newcommand{\N}{\mathbb{N}}

\newcommand{\R}{\mathbb{R}}
\renewcommand{\SS}{\mathbb{S}}

\newcommand{\Om}{\Omega}
\renewcommand{\S}{\Sigma}
\renewcommand{\a}{\alpha}

\newcommand{\g}{\gamma}
\newcommand{\de}{\delta}
\newcommand{\e}{\varepsilon}
\renewcommand{\k}{\kappa}
\renewcommand{\l}{\lambda}
\newcommand{\s}{\sigma}

\newcommand{\om}{\omega}
\newcommand{\vphi}{\varphi}
\newcommand{\Lip}{{\rm Lip}}

\newcommand{\Div}{{\rm div}\,}
\newcommand{\Id}{{\rm Id}\,}
\newcommand{\dist}{{\rm dist}}

\newcommand{\spt}{{\rm spt}}

\newcommand{\weakstar}{\stackrel{*}{\rightharpoonup}}

\newcommand{\pa}{\partial}

\newcommand{\cc}{\subset\subset}

\newcommand{\cl}{\mathrm{cl}\,}

\newcommand{\C}{\mathcal{C}}

\newcommand{\KK}{\mathcal{K}}


\theoremstyle{plain}
\newtheorem{theorem}{Theorem}[section]
\newtheorem{lemma}[theorem]{Lemma}

\newtheorem*{theorem*}{Theorem}
\newtheorem*{corollary*}{Corollary}

\theoremstyle{definition}

\newtheorem{example}[theorem]{Example}

\newtheorem{remark}[theorem]{Remark}

\newtheorem*{notation*}{Notation}

\numberwithin{equation}{section}
\numberwithin{figure}{section}
\newcommand{\id}{{\rm id}\,}

\pagestyle{plain}

\setcounter{tocdepth}{1}

\title{Plateau's problem as a singular limit \\ of capillarity problems}

\author{Darren King, Francesco Maggi, and Salvatore Stuvard}
\address{Department of Mathematics, The University of Texas at Austin, 2515 Speedway, Stop C1200, Austin TX 78712-1202, USA}
\medskip
\email{king@math.utexas.edu}
\medskip

\email{maggi@math.utexas.edu}
\medskip

\email{stuvard@math.utexas.edu}

\begin{document}

\begin{abstract}
{\rm Soap films at equilibrium are modeled, rather than as surfaces, as regions of small total volume through the introduction of a capillarity problem with a homotopic spanning condition. This point of view introduces a length scale in the classical Plateau's problem, which is in turn recovered in the vanishing volume limit. This approximation of area minimizing hypersurfaces leads to an energy based selection principle for Plateau's problem, points at physical features of soap films that are unaccessible by simply looking at minimal surfaces, and opens several challenging questions.}
\end{abstract}

\maketitle

\tableofcontents

\section{Introduction}

\subsection{Overview}\label{section overview} The theory of minimal surfaces with prescribed boundary data provides the basic model for soap films hanging from a wire frame: given an $(n-1)$-dimensional surface $\Gamma\subset\R^{n+1}$ without boundary, one seeks $n$-dimensional surfaces $M$ such that
\begin{equation}
  \label{minimal surfaces}
  H_M=0\,,\qquad\pa M=\Gamma\,,
\end{equation}
where $H_M$ is the mean curvature of $M$ (and $n=2$ in the physical case). A limitation of \eqref{minimal surfaces} as a physical model is that, in general, \eqref{minimal surfaces} may be non-uniquely solvable, including unstable (and thus, not related to observable soap films) solutions. Area minimization can be used to construct stable (and thus, physical) solutions, providing a strong motivation for the study of {\it Plateau's problem}; see \cite{coldingminiBOOK}. Here we are concerned with a more elementary physical limitation of \eqref{minimal surfaces}, namely, the absence of a length scale: if $M$ solves \eqref{minimal surfaces} for $\Gamma$, then $t\,M$ solves \eqref{minimal surfaces} for $t\,\Gamma$, no matter how large $t>0$ is.

\medskip

Following \cite{maggiscardicchiostuvard}, we introduce a length scale in the modeling of soap films by thinking of them as regions $E\subset\R^{n+1}$ with small volume $|E|=\e$. At equilibrium, the isotropic pressure at a point $y$ interior to the liquid but immediately close to its boundary $\pa E$ is
\begin{equation}
  \label{balance of p 1}
  p(y)=p_0+\s\,\vec{H}_{\pa E}(y)\cdot\nu_E(y)\,,
\end{equation}
where $p_0$ is the atmospheric pressure, $\s$ is the surface tension, $\nu_E$ the outer unit normal to $E$, and $\vec{H}_{\pa E}$ the mean curvature vector of $\pa E$; at the same time, for any two points $y,z$ inside the film we have
\begin{equation}
  \label{balance of p 2}
  p(y)-p(z)=\rho\,g\,(z-y)\cdot e_{n+1}\,,
\end{equation}
where $\rho$ is the density of the fluid, $g$ the gravity of Earth and $e_{n+1}$ is the vertical direction. In the absence of gravity, \eqref{balance of p 1} and \eqref{balance of p 2} imply that $H_E=\vec{H}_{\pa E}\cdot\nu_E$ is {\it constant} along $\pa E$. A heuristic analysis shows that if $\pa E$ is representable, locally, by the two graphs $\{x\pm (h(x)/2)\,\nu_M(x):x\in M\}$ defined by a positive function $h$ over an ideal mid-surface $M$, then $H_M$ should be {\it small, but non-zero} (even in the absence of gravity); see \cite[Section 2]{maggiscardicchiostuvard}. As it is well-known, one cannot prescribe non-vanishing mean curvature with arbitrarily large boundary data, see, e.g. \cite{giustiINV,duzaarfuchs}. Hence this point of view can potentially capture physical features of soap films that are not accessible by modeling them as minimal surfaces.

\medskip

The goal of this paper is starting the analysis of the variational problem playing for \eqref{balance of p 1} and \eqref{balance of p 2} the role that Plateau's problem plays for \eqref{minimal surfaces}. The new aspect is not in the energy minimized, but in the boundary conditions under which the minimization occurs. Indeed, the equivalence between the constancy of $H_E$ and the balance equations \eqref{balance of p 1} and \eqref{balance of p 2}, leads us to work in the classical framework of Gauss' capillarity model for liquid droplets in a container. Given an open set $\Om\subset\R^{n+1}$ (the container), the surface tension energy\footnote{For simplicity, we are setting to zero the adhesion coefficient with the container; see, e.g. \cite{Finn}.} of a droplet occupying the open region $E\subset\Om$ is given by
\[
\s\,\H^n(\Om\cap\pa E)\,,
\]
where $\H^n$ denotes $n$-dimensional Hausdorff measure (surface area if $n=2$, length if $n=1$). In the case of soap films hanging from a wire frame $\Gamma$, we choose as container $\Om$ the set
\[
\Om=\R^{n+1}\setminus I_\de(\Gamma)\,,
\]
corresponding to the complement of the ``solid wire'' $I_\de(\Gamma)$, where $I_\de$ denotes the closed $\de$-neighborhood of a set. The minimization of $\H^n(\Om\cap\pa E)$ among open sets $E\subset\Om$ with $|E|=\e$ leads indeed to finding minimizers whose boundaries have constant mean curvature. However, these boundaries will not resemble soap films at all, but will rather consist of  small ``droplets'' sitting at points of maximal curvature for $I_\de(\Gamma)$; see
\begin{figure}
  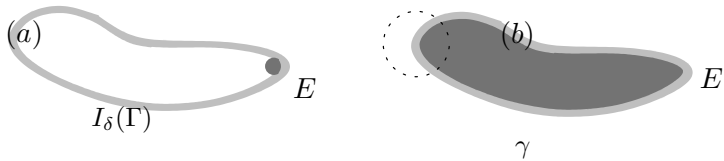\caption{{\small Minimizers of the capillarity problem in the unusual container $\Om$ consisting of the complement of a $\de$-neighborhood $I_\de(\Gamma)$ of a curve $\Gamma$ (depicted in light gray). The shape of $E$ is drastically different depending on whether or not a homotopic spanning condition is prescribed: (a) without a $\C$-spanning condition, we observe tiny droplets sitting near points of maximal mean curvature of $\pa\Om$; (b) with a $\C$-spanning condition, small rounds droplets will not be admissible, and a different region of the energy landscape is explored; minimizers are now expected to stretch out and look like soap films.}}\label{fig span}
\end{figure}
Figure \ref{fig span}, and \cite{baylerosales,fall,maggimihaila} for more information.

\medskip

To observe soap films, rather than droplets, we must require that $\pa E$ stretches out to span $I_\de(\Gamma)$. To this end, we exploit a beautiful idea introduced by Harrison and Pugh in \cite{harrisonpughACV}, as slightly generalized in \cite{DLGM}. The idea is fixing a {\bf spanning class}, i.e. a homotopically closed\footnote{By this we mean that if $\gamma_0,\gamma_1$ are smooth embeddings of $\SS^1$ into $\Omega$, $\g_0\in\C$, and there exists a continuous map $f:[0,1]\times\SS^1\to\Omega$ with $f(t,\cdot)=\gamma_t$ for $t=0,1$, then $\gamma_1\in\C$.} family $\C$ of smooth embeddings of $\SS^1$ into $\Om=\R^{n+1}\setminus I_\de(\Gamma)$, and to say\footnote{Notice that, in stating condition \eqref{spanning condition}, the symbol $\gamma$ denotes the subset $\gamma(\SS^1)\subset\Omega$. We are following here the same convention set in \cite{DLGM}.} that a relatively closed set $S\subset\Om$ is {\bf $\C$-spanning $I_\de(\Gamma)$} if
\begin{equation}
  \label{spanning condition}
  S\cap\g\ne\emptyset\qquad\forall \g\in\C\,.
\end{equation}
Given a choice of $\C$, we have a corresponding version of Plateau's problem
\begin{equation}
  \label{ell intro}
  \ell=\inf\Big\{\H^n(S):\mbox{$S$ is relatively closed in $\Om$, and $S$ is $\C$-spanning $I_\de(\Gamma)$}\Big\}\,,
\end{equation}
as illustrated in Figure
\begin{figure}
  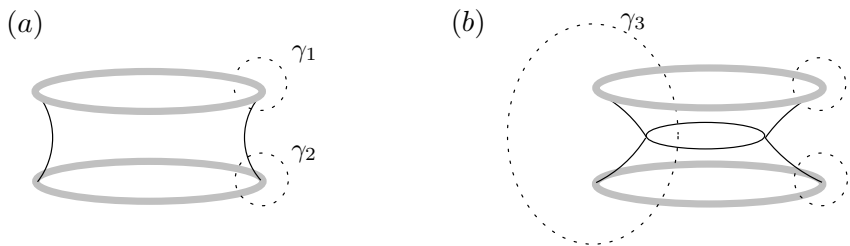\caption{{\small The variational problem \eqref{ell intro} with $\Gamma$ given by two parallel circles centered on the same axis at a mutual distance smaller than their common radius. Different choices of $\C$ lead to different minimizers $S$ in $\ell$: (a) if $\C$ is generated by the loops $\g_1$ and $\g_2$, then $S$ is the area minimizing catenoid; (b) if we add to $\C$ the homotopy class of $\g_3$, then $S$ is the {\it singular} area minimizing catenoid, consisting of two catenoidal necks, meeting at equal angles along a circle of $Y$-points bounding a ``floating'' disk. Such singular catenoid cannot be approximated in energy by smooth surfaces: hence the choice of casting $\ell$ in a class of non-smooth surfaces.}}\label{fig hpc}
\end{figure}
\ref{fig hpc}. The variational problem $\psi(\e)$ studied here is thus a reformulation of $\ell$ as a capillarity problem with a homotopic spanning condition, namely:
\begin{equation}\nonumber
  \psi(\e)=\inf\Big\{\H^n(\Om\cap\pa E):\mbox{$E\subset\Om$, $|E|=\e$, $\Om\cap\pa E$ is $\C$-spanning $I_\de(\Gamma)$}\Big\}\,,\qquad\e>0\,.
\end{equation}
We now give {\it informal} statements of our main results (e.g., we make no mention to singular sets or comment on reduced vs topological boundaries); see section \ref{section main statements} for the formal ones.

\medskip

\noindent {\bf Existence of generalized minimizers and Euler-Lagrange equations (Theorem \ref{thm lsc} and Theorem \ref{thm basic regularity})}:
      {\it There always exists a generalized minimizer $(K,E)$ for $\psi(\e)$: that is, there exists a set $K\subset\Om$, relatively closed in $\Om$ and $\C$-spanning $I_\de(\Gamma)$, and there exists an open set $E\subset\Om$ with $\Om\cap\pa E\subset K$ and $|E|=\e$, such that
      \[
      \psi(\e)=\F(K,E)=2\,\H^n(K\setminus\pa E)+\H^n(\Om\cap\pa E)\,.
      \]
      Moreover, $(K,E)$ minimizes $\F$ with respect to all its diffeomorphic images: in particular, $\Om\cap\pa E$ has constant mean curvature $\l\in\R$ and $K\setminus\pa E$ has zero mean curvature.}

\medskip

\noindent {\bf Convergence to the Plateau's problem (Theorem \ref{thm convergence as eps goes to zero})}:
      {\it We always have $\psi(\e)\to 2\,\ell$ when $\e\to 0^+$, and if $(K_j,E_j)$ are generalized minimizers for $\psi(\e_j)$ with $\e_j\to 0^+$, then, up to extracting subsequences, we can find a minimizer $S$ for $\ell$ with
      \[
      2\,\int_{K_j\setminus\pa E_j}\vphi+\int_{\pa E_j}\vphi\to2\,\int_S\vphi\qquad\forall \vphi\in C^0_c(\Om)\,,
      \]
      as $j\to\infty$; in other words, generalized minimizers in $\psi(\e_j)$ with $\e_j\to 0^+$ converge as Radon measures to minimizers in the Harrison-Pugh formulation of Plateau's problem.}


\begin{example}[Volume and thickness in the non-collapsed case]\label{example two points}
  {\rm Let $\Gamma$ consists of two points at distance $r$ in the plane, or of an $(n-1)$-sphere of radius $r$ in $\R^{n+1}$. For $\e$ small enough, $\psi(\e)$ should admit a unique generalized minimizer $(K,E)$, consisting of two almost flat spherical caps meeting orthogonally along the torus $I_\de(\Gamma)$ (so that $K=\pa E$ and collapsing does not occur); see Figure \ref{fig example23}-(a). In general, we expect that {\it when all the minimizers $S$ in $\ell$ are smooth, then generalized minimizers in $\psi(\e)$ are not collapsed, and, for small $\e$, $K=\pa E$ is a two-sided approximation of $S$, with $H_{E}=\psi'(\e)\to 0$ and
  \begin{equation}
    \label{psi eps quadro expected}
    \psi(\e)=2\,\ell+C\,\e^2+{\rm o}(\e^2)\,,\qquad\mbox{as $\e\to 0^+$}\,,
  \end{equation}
  for a positive $C$}. This insight is consistent with the idea (see \cite{maggiscardicchiostuvard}) that {\it almost minimal surfaces} arise in studying soap films with a thickness. In particular, {\it volume and thickness will be directly related} in terms of the geometry of $\Gamma$. Sending $\e\to 0^+$ with $\Gamma$ fixed or, equivalently, considering $t\,\Gamma$ for large $t$ at $\e$ fixed, will make the thickness decrease until it reaches a threshold below which we do not expect soap films to be stable. A critical thickness can definitely be identified with the characteristic length scale of the molecules of surfactant, below which the model stops making sense. But depending on temperatures, actual soap films with even larger thicknesses should burst out due to the increased probability of fluctuations towards unstable configurations.}
\end{example}

\begin{example}[Volume and thickness in the collapsed case]\label{example tripe sing}
  {\rm At small volumes, and in presence of singularities in the minimizers of $\ell$, collapsing is energetically convenient, and allows $\psi(\e)$ to approximate $2\,\ell$ from below. If $\Gamma\subset\R^2$ consists of the three vertices of an equilateral triangle, for small $\de$ the unique minimizer of $\ell$ consists of a $Y$-configuration. For small $\e$, we expect generalized minimizers $(K,E)$ of $\psi(\e)$ to be collapsed, see Figure
  \begin{figure}
  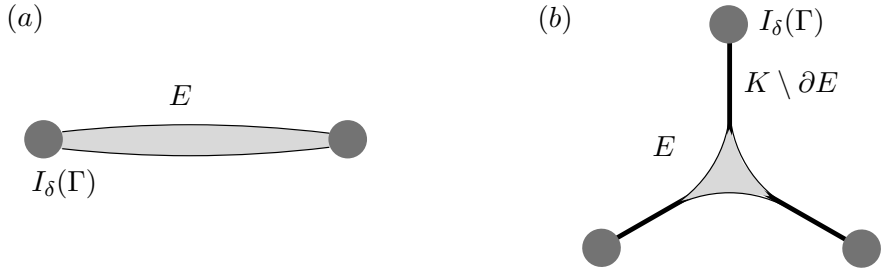\caption{{\small (a) If $\Gamma$ consists of two points, then the minimizer is not collapsed, and is bounded by two very flat circular arcs; (b) when $\Gamma$ consists of the vertices of an equilateral triangle, the generalized minimizer is indeed collapsed. The three segments defining $K\setminus\pa E$ are depicted in bold, and $E$ is a negatively curved curvilinear triangle nested around the singular point of the unique minimizer of $\ell$.}}\label{fig example23}
  \end{figure}
  \ref{fig example23}-(b): there, $E$ is a curvilinear triangle made up of three circular arcs whose length is ${\rm O}(\sqrt\e)$, and whose (negative) curvature is ${\rm O}(1/\sqrt{\e})$. The thickness of an actual soap film in this configuration should thus be considerably larger near the singularity than along the collapsed region, and the volume and the thickness of the film are somehow {\it independent} geometric quantities. This suggests, in presence of singularities, the need for introducing a second length scale in the model. A possibility is replacing the sharp interface energy $\H^n(\Om\cap\pa E)$ with a diffused interface energy, like the Allen-Cahn energy
  \[
  \E_\eta(u)=\eta\,\int_\Om|\nabla u|^2+\frac1\eta\int_\Om W(u)\,,\qquad \eta>0\,,
  \]
  for a double-well potential with $\{W=0\}=\{-1,1\}$. We expect $\{u>0\}$ to (approximately) coincide with the union of a curvilinear triangle of area $\e$ with three stripes having the collapsed segments as their mid-sections, and of width $\eta\,|\log\eta|$; cf. with \cite{delpino1}.}
\end{example}

\begin{example}[Capillarity as a selection principle for Plateau's problem]\label{example selection}
  {\rm The following statement holds (as a heuristic principle): {\it Generalized minimizers of $\psi(\e)$ converge to those minimizers of Plateau's problem \eqref{ell intro} with larger singular set, and when no singular minimizers are present, they select those whose second fundamental form has maximal $L^2$-norm}. Since the second part of this selection principle is justified by standard second variation arguments, we illustrate the first part only. In
  \begin{figure}
    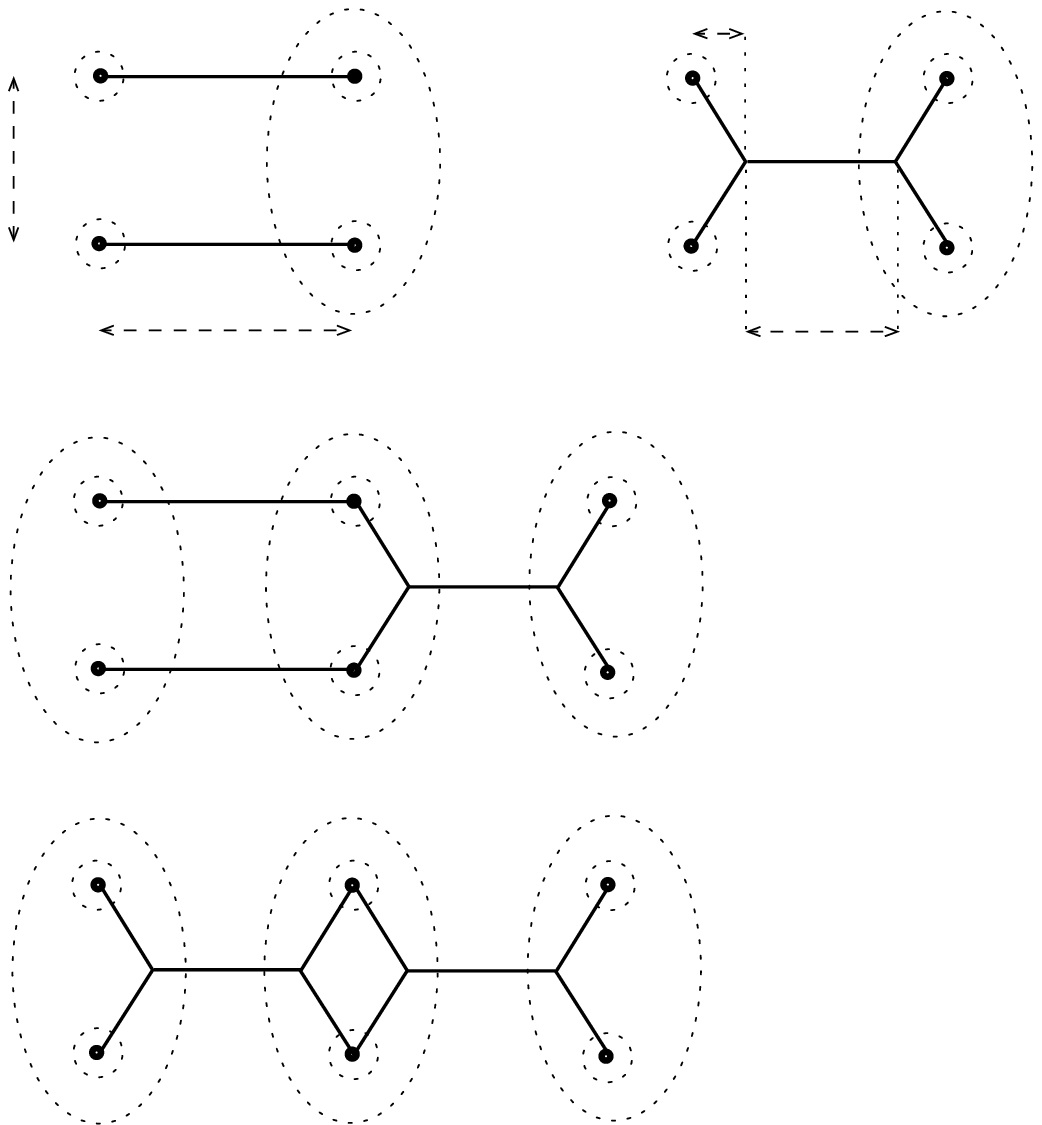\caption{{\small (a) and (b): a four points configuration $\Gamma$ with a choice of $\C$ such that $\ell$ admits two minimizers, one with and one without singularities; (c) and (d): a six points configuration $\Gamma$ with a choice of $\C$ such that $\ell$ admits many minimizers, possibly with a variable number of singularities; here we have depicted two of them, including the one with four singular points that is selected by the $\psi(\e)$ problems.}}\label{fig example}
  \end{figure}
  Figure \ref{fig example}, $\Gamma$ is either given by four or by six points, that are suitably spaced so that $\ell$ has different minimizers. As $\e\to 0^+$, $\psi(\e)$ selects those $\ell$-minimizers with singularities over the ones without singularities; and when more minimizers with singularities are present, it selects the ones with the largest number of singularities. Indeed, the approximation of a smooth minimizer in $\ell$ will require an energy cost larger than $2\,\ell$. At the same time, each time a singularity is present, minimizers of $\psi(\e)$ can save length in the approximation, thus paying less than $2\,\ell$ in energy, and the more the singularities, the bigger the gain. To check this claim, pick $N$ singularities, and denote by $\e_i$ the volume placed near the $i$-th singularity and by $r_i$ the radius of the three circular arcs enclosing $\e_i$. Each wetted singularity has area $c_1\,r_i^2$, while the total relaxed energy of the approximating configuration is $\F=2\,\ell-c_2\,\sum_{i=1}^N\,r_i$. Minimizing under the constraint $\e=c_1\,\sum_{i=1}^N\,r_i^2$, we must take $r_i=\sqrt{\e/N c_1}$, thus finding
  \[
  \psi(\e)=2\,\ell-c_2\,\sqrt{\frac{\e\,N_{{\rm max}}}{c_1}}\,,
  \]
  if $N_{{\rm max}}$ is the maximal number of singularities available among minimizers of $\ell$. This example suggests that (in every dimension) {\it in the presence of singular minimizers of $\ell$, one should have}
  \begin{equation}
    \label{psip primo meno infinito}
  \mbox{$\psi'(\e)\to-\infty$ as $\e\to 0^+$}\,.
  \end{equation}
  This is of course markedly different from what we expect to be the situation when $\ell$ has only smooth minimizers, see \eqref{psi eps quadro expected}. We finally notice that a selection principle for the capillarity model (without homotopic spanning conditions) via its Allen-Cahn approximation has been recently obtained by Leoni and Murray, see \cite{leonimurray1,leonimurray2}.  }
\end{example}

\subsection{Statements of the results}\label{section main statements} We now give a more technical introduction to our paper, with precise statements, more bibliographical references, and comments on the proofs.

\medskip

\noindent {\bf Plateau's problem with homotopic spanning}: We fix a compact set $W\subset\R^{n+1}$ (the ``wire frame'') and denote the region accessible by the soap film as
\[
\Om=\R^{n+1}\setminus W\,.
\]
The typical case we have in mind is $W=I_\de(\Gamma)$, as discussed in section \ref{section overview}, but this is not necessary. We fix a {\bf spanning class} $\C$, that is a non-empty family of smooth embeddings of $\SS^1$ into $\Om$ which is closed by homotopy in $\Om$.  We assume that $W$ and $\C$ are such that the {\bf Plateau's problem defined by $\C$}
\begin{equation}
  \label{plateau problem}
  \ell=\inf\big\{\H^n(S):S\in\Sc\big\}
\end{equation}
is such that\footnote{The condition $\ell<\infty$ clearly implies that no $\g\in\C$ is homotopic to a constant map.} $\ell<\infty$. Here, for the sake of brevity, we have introduced
\[
\Sc=\big\{S\subset\Om:\mbox{$S$ is relatively closed in $\Om$ and $S$ is $\C$-spanning $W$}\big\}\,.
\]
As proved in \cite{harrisonpughACV,DLGM}, if $\ell<\infty$, then there exists a compact, $\H^n$-rectifiable set $S$ such that $\H^n(S)=\ell$; see also \cite{harrisonJGA,davidshouldwe,fangAPISA,hpOPEN,hpCOHOMO,dPdRghira,delederosaghira,friedKP,harrisonpughGENMETH,FangKola,derosaSIAM} for related existence results. In addition, $S$ minimizes $\H^n$ with respect to Lipschitz  perturbations of the identity localized in $\Om$, so that: (i) $S$ is a classical minimal surface outside of an $\H^n$-negligible, relatively closed set in $\Om$ by \cite{Almgren76}; (ii) if $n=1$, $S$ consists of finitely many segments, possibly meeting in three equal angles at singular $Y$-points in $\Om$; (iii) if $n=2$, $S$ satisfies {\bf Plateau's laws} by \cite{taylor76}: namely, $S$ is locally diffeomorphic either to a plane, or to a cone $Y=T^1\times\R$, or to a cone $T^2$, where $T^n$ is the cone over the origin defined by the $(n-1)$-dimensional faces of a regular tetrahedron in $\R^{n+1}$. The validity of Plateau's laws in this context makes \eqref{plateau problem} more suitable when one is motivated by physical considerations: indeed, minimizers of the codimension one Plateau's problem in the class of rectifiable currents are necessarily smooth if $n\le 6$. Although smoothness is desirable for geometric applications, it creates an {\it a priori} limitation when studying actual soap films; see also \cite{davidshouldwe,harrisonpughACV,DLGM}.

\medskip

\noindent {\bf The capillarity problem and the relaxed energy}: Next, we give a precise formulation of the capillarity problem $\psi(\e)$ at volume $\e>0$, which is defined as
\begin{equation}
  \label{psi eps}
  \psi(\e)=\inf\Big\{\H^n(\Om\cap\pa E):\mbox{$E\in\E$, $|E|=\e$, $\Om\cap\pa E$ is $\C$-spanning $W$}\Big\}\,.
\end{equation}
Here we have introduced the family of sets
\begin{equation}\label{class E}
  \E=\Big\{E\subset\Om:\,\mbox{$E$ is an open set and $\pa E$ is $\H^n$-rectifiable}\Big\}\,.
\end{equation}
If $E\in\E$, then $\pa E$ is $\H^n$-finite and covered by countably many Lipschitz images of $\R^n$ into $\R^{n+1}$. Thus, $E$ is of finite perimeter in $\Om$ by a classical result of Federer, and its (distributional) perimeter $P(E;U)$ in an open set $U\subset\Om$ is equal to $\H^n(U\cap\pa^* E)$, where $\pa^*E$ is the reduced boundary of $E$ (notice that, in general, $P(E;U)\le\H^n(U\cap\pa E)$). The relaxed energy $\F$ is defined by
\begin{eqnarray*}
\F(K,E;U)&=&\H^n(U\cap\pa^*E)+2\,\H^n\big(U\cap(K\setminus\pa^*E)\big)\,,\qquad \mbox{$U\subset\Om$ open}\,,
\\
\F(K,E)&=&\F(K,E;\Om)\,,
\end{eqnarray*}
on every pair $(K,E)$ in the family $\KK$ given by
\[
\begin{split}
  \KK=
  \Big\{(K,E):\,&\mbox{$E\subset\Om$ is open with $\Om\cap\cl(\pa^*E)=\Om\cap\pa E\subset K$\,,}
  \\
  &\mbox{$K\in\Sc$ and $K$ is $\H^n$-rectifiable in $\Om$}\Big\}\,.
\end{split}
\]
By the requirement $K\in\Sc$, $K$ is $\C$-spanning $W$, while $\Om\cap\pa E$, which is always a subset of $K$, may be not be $\C$-spanning $W$; we expect this when collapsing occurs, see Figure \ref{fig example23}.

\medskip

\noindent {\bf Assumptions on $\Om$}: We make two main geometric assumptions on $W$ and $\C$. Firstly, in constructing a system of volume-fixing variations for a given minimizing sequence of $\psi(\e)$ (see step two of the proof of Theorem \ref{thm lsc}) we shall assume that
\begin{equation}
  \label{hp on W and C 0}
  \mbox{$\exists\,\tau_0>0$ such that, for every $\tau<\tau_0$, $\R^{n+1}\setminus I_{\tau}(W)$ is connected}\,.
\end{equation}
This is compatible with the idea that, in the physical case $n=2$, $W$ represents a ``solid wire''. Secondly, to verify the finiteness of $\psi(\e)$ (see step one in the proof of Theorem \ref{thm lsc}), we require that
\begin{eqnarray}
  \label{hp on W and C}
  \mbox{$\exists\,\eta_0>0$ and a minimizer $S$ in $\ell$ s.t. $\g\setminus I_{\eta_0}(S)\ne\emptyset$ for every $\g\in\C$}\,.
\end{eqnarray}
This is clearly a generic situation, which (thanks to the convex hull property of stationary varifolds) is implied, for example, by the much more stringent condition that $\g\setminus Z\ne\emptyset$ for every $\g\in\C$ where $Z$ is the closed convex hull of $W$. Finally, we shall also assume that ``$\pa\Om=\pa W$ is smooth'': by this we mean that locally near each $x\in\pa\Om$, $\Om$ can be described as the epigraph of a smooth function of $n$-variables.

\bigskip

\noindent {\bf Existence of minimizers and Euler-Lagrange equations}: Our first main result is the existence of generalized minimizers of $\psi(\e)$.

\begin{theorem}[Existence of generalized minimizers]\label{thm lsc}
  Let $\ell<\infty$, $\pa W$ be smooth and let \eqref{hp on W and C 0} and \eqref{hp on W and C} hold. If $\{E_j\}_j$ is a minimizing sequence for $\psi(\e)$, 
  then there exists a pair $(K,E)\in\KK$ with $|E|=\e$ such that, up to possibly extracting subsequences, and up to possible modifications of each $E_j$ outside a large ball containing $W$ (with both operations resulting in defining a new minimizing sequence for $\psi(\e)$, still denoted by $\{E_j\}_j$), we have that
  \begin{equation}\label{mininizing seq conv to gen minimiz}
    \begin{split}
    &\mbox{$E_j\to E$ in $L^1(\Om)$}\,,
    \\
    &\H^n\llcorner(\Om\cap\pa E_j)\weakstar \theta\,\H^n\llcorner K\qquad\mbox{as Radon measures in $\Om$}\,,
    \end{split}
  \end{equation}
  as $j\to\infty$, where $\theta:K\to\R$ is an upper semicontinuous function with
  \begin{equation}
    \label{theta density}
    \mbox{$\theta= 2$ $\H^n$-a.e. on $K\setminus\pa^*E$},\qquad\mbox{$\theta=1$ on $\Om\cap\pa^*E$}\,.
  \end{equation}
  Moreover, $\psi(\e)=\F(K,E)$ and, for a suitable constant $C$, $\psi(\e)\le 2\,\ell+C\,\e^{n/(n+1)}$.
\end{theorem}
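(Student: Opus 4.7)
I would split the argument into three main tasks: (i) establishing the quantitative upper bound $\psi(\e)\le 2\ell+C\e^{n/(n+1)}$ via an explicit competitor, (ii) extracting a compact minimizing sequence with the correct volume (through modifications at infinity plus a volume-fixing perturbation), and (iii) identifying the weak-$*$ limit of the boundary measures as $\theta\,\H^n\llcorner K$ with the claimed density, together with the inheritance of the $\C$-spanning condition.

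\emph{Upper bound.} Hypothesis \eqref{hp on W and C} supplies a minimizer $S$ of $\ell$ together with a distance $\eta_0$ such that every $\g\in\C$ pokes outside $I_{\eta_0}(S)$. A natural competitor is a two-sided tubular thickening $E_t=\{x\in\Om:\dist(x,S)<t\}$ with $t\sim\e/(2\ell)$. Its boundary in $\Om$ consists essentially of two almost parallel copies of $S$ (which remain $\C$-spanning $W$ for $t$ small, by a short homotopy of each $\g$ across the missed neighborhood), giving $|E_t|\sim 2\ell\,t=\e$ and $\H^n(\Om\cap\pa E_t)\le 2\ell+O(t)$. Correcting the volume to be exactly $\e$ by a localized Lipschitz deformation yields $\psi(\e)\le 2\ell+C\e^{n/(n+1)}$.

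\emph{Compactness and volume fixing.} For a minimizing sequence $\{E_j\}_j$ one has $P(E_j;\Om)\le\psi(\e)+o(1)$, so BV compactness delivers $E_j\to E$ in $L^1_{\loc}(\Om)$. To avoid loss of mass at infinity, choose $R$ with $W\subset B_R$, and use a Fubini/slicing argument on spheres $\pa B_{R_j}$ with $R_j\in(R,2R)$ to select radii on which the trace of $E_j$ has small $\H^n$-measure; then replace $E_j$ outside $B_{R_j}$ by a round ball of volume $|E_j\setminus B_{R_j}|$ far from $W$. Since $W\subset B_R$, the spanning condition is untouched, and the perimeter cost is $o(1)$. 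If the resulting limit still has $|E|\ne\e$, one invokes \eqref{hp on W and C 0}: the connectedness of $\R^{n+1}\setminus I_\tau(W)$ for small $\tau$ lets us anchor a finite collection of disjoint balls with compactly supported flows in $\Om$ whose volume derivatives span a neighborhood of $0$; these diffeomorphisms adjust the volume of each $E_j$ to $\e$ at vanishing perimeter cost while preserving the homotopy class of any $\g\in\C$. After these two operations, one may assume $1_{E_j}\to 1_E$ in $L^1(\Om)$ with $|E|=\e$ and $\H^n\llcorner(\Om\cap\pa E_j)\weakstar\mu$ for some finite Radon measure $\mu$ in $\Om$.

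\emph{Structure of $\mu$ and spanning.} Set $K=\Om\cap\spt\mu$. Perimeter lower semicontinuity gives $\mu\ge\H^n\llcorner\pa^*E$, whence $\Om\cap\pa^*E\subset K$. For the $\C$-spanning property of $K$, I would adapt the slicing/perturbation argument of De Lellis--Ghiraldin--Maggi: if some $\g\in\C$ missed $K$, it would also miss $\pa E_j$ for $j$ large (by an open-set argument using that $\spt\mu$ contains all accumulation points of $\pa E_j$), contradicting $\C$-spanning of $\Om\cap\pa E_j$. Rectifiability of $K$ and the dichotomy $\theta\in\{1,2\}$ would be extracted from a monotonicity/blow-up analysis: at points $x\in K$ where the $E_j$ converge to $E$ with $x\in\pa^*E$, one sheet of $\pa E_j$ limits with density $1$; at points in $K\setminus\pa^*E$, two sheets collapse and cancel in the $L^1$ sense, producing density $2$ and accounting for the doubling in $\F$. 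Upper semicontinuity of $\theta$ then follows from standard density estimates, and combining this with $\mu\ge\H^n\llcorner\pa^*E$ gives $\F(K,E)\le\liminf_j\H^n(\Om\cap\pa E_j)=\psi(\e)$; the reverse inequality is automatic since $(K,E)\in\KK$ with $|E|=\e$ forces $\F(K,E)\ge\psi(\e)$ by the argument that any such pair can be approximated by admissible sets for $\psi(\e)$.

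The main obstacle I anticipate is the final step, and in particular reconciling the weak-$*$ Radon measure limit with a clean rectifiable/integer-multiplicity structure ($\theta\in\{1,2\}$) \emph{and} passing the homotopic spanning condition onto the possibly larger set $K$ in the presence of collapse. This is precisely the mechanism by which the doubling factor enters the relaxed energy $\F$, and it lies genuinely beyond the Harrison--Pugh/DLGM existence theory for \eqref{plateau problem}. The volume-fixing step is also delicate, since one must guarantee that the constructed flows actually change the volume (hence the nondegeneracy encoded in \eqref{hp on W and C 0}) without interfering with the spanning constraint across the whole minimizing sequence.
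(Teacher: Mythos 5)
Your outline shares the broad architecture of the paper (explicit competitor for the upper bound, nucleation and volume-fixing perturbations, weak-$*$ compactness of the boundary measures, density analysis), but there are two gaps that are not cosmetic: each one defeats a key chain of implications.

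\textbf{The reverse inequality $\F(K,E)\ge\psi(\e)$ is not automatic.} You close the argument with ``the reverse inequality is automatic since $(K,E)\in\KK$ with $|E|=\e$ forces $\F(K,E)\ge\psi(\e)$ by the argument that any such pair can be approximated by admissible sets for $\psi(\e)$.'' The pair $(K,E)$ is \emph{not} in the competition class for $\psi(\e)$: only $K$ is $\C$-spanning, and $\Om\cap\pa E$ may fail to span when collapsing occurs. Approximating a generic $(K',E')\in\KK$ in energy by $\psi(\e)$-admissible open sets (e.g.\ by taking $U_{\eta}(K'\cup E')\setminus I_{\eta}(K'\cap E')$) is exactly the point the paper explicitly declines to prove in Remark~\ref{remark mink}, calling it delicate even for subclasses of $\KK$. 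The paper does not pursue this route at all. Instead it proves the equality $\psi(\e)=\F(K,E)$ from the other direction, by showing that $\mu_j(\Om)\to\mu(\Om)$ (no leak of area to $\pa\Om$ via the boundary-pushing diffeomorphisms of Lemma~\ref{lemma close by Lipschitz at boundary}, no leak at infinity via exterior cup competitors) and then establishing the matching \emph{upper} bounds $\theta\le 1$ on $\pa^*E$ and $\theta\le 2$ $\H^n$-a.e.\ on $K\setminus\pa^*E$ via slab competitors, so that $\mu(\Om)=\F(K,E)$ exactly. Your sketch gives only the lower bounds on $\theta$; without slab competitors (or a substitute) the equality $\F(K,E)=\psi(\e)$ does not follow from what you have.

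\textbf{The spanning of $K$ does not follow from an open-set argument.} Your claim that ``if some $\g\in\C$ missed $K$, it would also miss $\pa E_j$ for $j$ large'' because ``$\spt\mu$ contains all accumulation points of $\pa E_j$'' is false: under weak-$*$ convergence of $\mu_j$ the supports of $\mu_j$ can be much larger than $\spt\mu$, and a curve $\g$ disjoint from $K$ may still hit every $\pa E_j$ on a set of vanishing measure. Spanning of the limit is a genuinely combinatorial fact (Lemma~\ref{statement K spans}, i.e.\ \cite[Step~2, Theorem~4]{DLGM}) that rests on the local structure of $\g\cap\cl(B_r(x))$ and the density lower bound, not on convergence of supports. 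Relatedly, your treatment of $\theta\ge 2$ on $K\setminus\pa^*E$ (``two sheets collapse and cancel'') is a correct heuristic but is exactly where the boundary structure must be used; the paper has to introduce a dedicated collapsing lemma (Lemma~\ref{lemma llb}, step two) whose argument exploits that $\pa E_j$ are topological boundaries of sets, not generic $\H^n$-finite sets. Neither fact follows from the DLGM existence theory alone.

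In addition, the density lower bound and monotonicity in the paper rest on cup and cone competitors built as \emph{boundaries} of open sets and fed through the uniform volume-fixing variation inequality; the proposal's ``monotonicity/blow-up analysis'' and ``localized Lipschitz deformation'' are too unspecific to carry the same weight, especially since cone competitors of a set $E$ (as opposed to a set $K\in\Sc$) may fail to span unless one works with the approximating diffeomorphisms of Lemma~\ref{lemma cone competitor}.
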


\begin{remark}
  {\rm Whenever $(K,E)\in\KK$ is such that $|E|=\e$, $\F(K,E)=\psi(\e)$ and there exists a minimizing sequence $\{E_j\}_j$ for $\psi(\e)$ which converges to $(K,E)$ as in \eqref{mininizing seq conv to gen minimiz}, we say that $(K,E)$ is a {\bf generalized minimizer of $\psi(\e)$}. We say that $(K,E)$ is {\bf collapsed} if $K\setminus\pa E\ne\emptyset$. If $(K,E)$ is not collapsed, then $E$ is a (standard) minimizer of $\psi(\e)$.}
\end{remark}

Next, we derive the Euler-Lagrange equations for a generalized minimizer and apply Allard's theorem.

\begin{theorem}[Euler-Lagrange equation for generalized minimizers]\label{thm basic regularity}
 Let $\ell<\infty$, $\pa W$ be smooth and let \eqref{hp on W and C 0} and \eqref{hp on W and C} hold. If $(K,E)$ is a generalized minimizer of $\psi(\e)$ and $f:\Om\to\Om$ is a diffeomorphism such that $|f(E)|=|E|$, then
 \begin{equation}
   \label{minimality KE against diffeos}
   \F(K,E)\le\F(f(K),f(E))\,.
 \end{equation}
 In particular:
 \begin{enumerate}
   \item[(i)] there exists $\l\in\R$ such that
  \begin{equation}
    \label{stationary main}
    \l\,\int_{\pa^*E}X\cdot\nu_E\,d\H^n=\int_{\pa^*E}\Div^K\,X\,d\H^n+2\,\int_{K\setminus\pa^*E}\Div^K\,X\,d\H^n\,,
  \end{equation}
  for every $X\in C^1_c(\R^{n+1};\R^{n+1})$ with $X\cdot\nu_\Om=0$ on $\pa\Om$, where $\Div^K$ denotes the tangential divergence along $K$;
  \item[(ii)] there exists $\Sigma\subset K$, closed and with empty interior in $K$, such that $K\setminus\Sigma$ is a smooth hypersurface, $K\setminus(\Sigma\cup\pa E)$ is a smooth embedded minimal hypersurface, $\H^n(\Sigma\setminus\pa E)=0$, $\Om\cap(\pa E\setminus\pa^*E)\subset \Sigma$ has empty interior in $K$, and $\Om\cap\pa^*E$ is a smooth embedded hypersurface with constant scalar (w.r.t. $\nu_E$) mean curvature $\l$.
 \end{enumerate}
\end{theorem}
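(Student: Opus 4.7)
The plan has three stages: first establish the inequality \eqref{minimality KE against diffeos} against diffeomorphisms of $\Om$ isotopic to the identity (which suffices for deducing (i)); next, extract (i) as the first variation along one-parameter families; finally, recast (i) as the stationarity of an integer rectifiable varifold and invoke Allard-type regularity to obtain (ii). For the comparison inequality, fix the minimizing sequence $\{E_j\}_j$ from Theorem \ref{thm lsc} converging to $(K,E)$. Two basic facts drive the argument: (a) $\Om\cap\pa f(E_j)=f(\Om\cap\pa E_j)$ is $\C$-spanning $W$, because for any $\g\in\C$ the isotopy of $f$ to $\id_\Om$ makes $f^{-1}(\g)$ homotopic to $\g$ in $\Om$, hence $f^{-1}(\g)\in\C$ by homotopic closedness, and so it meets $\Om\cap\pa E_j$; (b) $f$ has bounded Jacobian, so $|f(E_j)\triangle f(E)|\le C\,|E_j\triangle E|\to 0$ and $|f(E_j)|\to|f(E)|=\e$.

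Next, restore the volume constraint by composing with a compactly supported volume-fixing diffeomorphism. Choose $Y\in C^1_c(\R^{n+1};\R^{n+1})$ tangent to $\pa\Om$ with $\spt Y$ in a ball $B\cc\Om$ disjoint from $K\cup f(K)$ and such that $\int_{\pa^*E\cap B}Y\cdot\nu_E\ne 0$; this is possible because $\Om$ is connected by \eqref{hp on W and C 0} and $\pa^*E\cap\Om$ is non-empty. Let $\{g_s\}_s$ be the flow of $Y$. The implicit function theorem yields $s_j={\rm O}(|\,|f(E_j)|-\e\,|)\to 0$ with $g_{s_j}(f(E_j))\in\E$, $|g_{s_j}(f(E_j))|=\e$, and $\Om\cap\pa g_{s_j}(f(E_j))$ still $\C$-spanning $W$ (by applying (a) to the composition $g_{s_j}\circ f$, itself isotopic to $\id_\Om$). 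Hence
\[
\psi(\e)\le\limsup_j\H^n\big(\Om\cap\pa g_{s_j}(f(E_j))\big)\,.
\]
The area formula expresses the right-hand side as an integral of a continuous functional of the tangent planes to $\Om\cap\pa E_j$, and the weak-$\ast$ convergence $\H^n\llcorner(\Om\cap\pa E_j)\weakstar\theta\,\H^n\llcorner K$ together with the multiplicity identities \eqref{theta density} identifies the limit with $\F(f(K),f(E))$. Combined with $\F(K,E)=\psi(\e)$, this gives \eqref{minimality KE against diffeos}.

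For (i), given $X\in C^1_c(\R^{n+1};\R^{n+1})$ tangent to $\pa\Om$, the flow $\{f_t^X\}$ consists of diffeomorphisms isotopic to $\id_\Om$. Compose with the volume-fixing flow $\{g_s^Y\}$ above and pick $s(t)$ with $s(0)=0$ so that $|g_{s(t)}^Y(f_t^X(E))|=\e$; by \eqref{minimality KE against diffeos}, $t\mapsto\F(g_{s(t)}^Y(f_t^X(K)),g_{s(t)}^Y(f_t^X(E)))$ has a minimum at $t=0$. Differentiating at $t=0$ and rearranging yields \eqref{stationary main} with $\l$ a constant depending on $Y$; independence of $\l$ from $Y$ follows by varying $X$. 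For (ii), \eqref{stationary main} means that $V=\theta\,\H^n\llcorner K$ is an integer rectifiable varifold in $\Om$ with locally bounded generalized mean curvature: zero on $K\setminus\pa^*E$ and equal to $\l\,\nu_E$ on $\Om\cap\pa^*E$. Allard's regularity theorem applied separately to the two pieces gives smoothness away from a singular set $\Sigma$, which is closed by upper semicontinuity of $\theta$, nowhere dense by Allard, and $\H^n$-negligible off $\pa E$ by the standard partial regularity theory for stationary integer rectifiable varifolds.

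The main obstacle will be the limit identification $\limsup_j\H^n(\Om\cap\pa g_{s_j}(f(E_j)))=\F(f(K),f(E))$ in the second paragraph: the area formula applied to $f$ reduces the left-hand side to a limit of integrals against continuous functions of the tangent planes to $\pa E_j$, which must be matched carefully against the weighted weak-$\ast$ limit measure $\theta\,\H^n\llcorner K$; the collapsed part $K\setminus\pa^*E$ must acquire multiplicity $2$ precisely because $\theta=2$ there, and this requires propagating the density information from Theorem \ref{thm lsc} through the change of variables. Preserving the $\C$-spanning property under the volume-fixing perturbation is a secondary subtlety, handled by observing that $g_{s_j}\circ f$ is still isotopic to $\id_\Om$ in $\Om$ and then reapplying the homotopy argument of fact (a).
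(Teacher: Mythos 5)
Your plan fails at exactly the step you flag as ``the main obstacle,'' and the failure is not merely technical but structural: the weak-$\ast$ convergence $\H^n\llcorner(\Om\cap\pa E_j)\weakstar\theta\,\H^n\llcorner K$ established in Theorem \ref{thm lsc} is convergence of \emph{Radon measures}, which controls integrals of continuous functions of the base point but carries no information about the approximate tangent planes to $\pa E_j$. The area formula gives
\begin{equation*}
\H^n\bigl(\Om\cap\pa f(E_j)\bigr)=\int_{\Om\cap\pa E_j}J^{\pa E_j}f\,d\H^n\,,
\end{equation*}
and the integrand $J^{\pa E_j}f(x)$ depends on the approximate tangent plane $T_x(\pa E_j)$, not just on $x$. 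Without \emph{varifold} convergence $V_{E_j}\to V_K$ (which is precisely what is not available here, and is indeed generally false along a minimizing sequence, since $\pa E_j$ may fold or oscillate near the collapsed part of $K$), you cannot conclude that this integral tends to $\int_K J^Kf\,\theta\,d\H^n=\F(f(K),f(E))$. So the phrase ``propagating the density information ... through the change of variables'' names the gap without filling it.

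The paper's proof circumvents this by never pushing $\pa E_j$ forward directly. Instead it first replaces $E_j$ locally, near a fine covering $\{B_{r_i}(x_i)\}$ of a good compact set $K^*\subset K$, by the slab competitors $F_j^{x_i}$ of Lemma \ref{lemma slab competitor}. The payoff is \eqref{the important remark}/\eqref{the important remark citato}: inside each slab $S_{\tau,r_i}^{x_i}$, the competitor's boundary has, in the $\eta\to0^+,\,j\to\infty$ limit, almost all of its $\H^n$-measure concentrated where its tangent plane equals $T_{x_i}K$, with the correct multiplicity $\theta(x_i)\in\{1,2\}$. Once the tangent planes are pinned down, the tangential Jacobian $J^{\pa F_j}f$ is controlled up to $\om_*$-errors by $J^{T_{x_i}K}f(x_i)$, and summing over the covering (using the monotonicity-based bound $\sum_i r_i^n\le C$) yields \eqref{basic Fj stima}, i.e.\ $\limsup\H^n(\Om\cap\pa f(F_j))\le\F(f(K),f(E))+C\tau$. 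Volume is then restored by the uniform volume-fixing variations from step three of Theorem \ref{thm lsc}, defined relative to $f(E)$, not by a fixed flow chosen in advance. To repair your proposal you would essentially have to reconstruct this slab-replacement step: the flat slabs are what convert Radon-measure control into tangent-plane control, and there is no shortcut that uses the diffeomorphism $f$ directly on the raw minimizing sequence.

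Two secondary remarks. First, your argument that $f$ preserves the $\C$-spanning condition relies on $f$ being isotopic to the identity; the theorem is stated for arbitrary diffeomorphisms and the paper handles the spanning issue only through Lemma \ref{statement spanning is close by Lipschitz maps}, which concerns compactly supported bi-Lipschitz maps. If you pursue your route you should either restrict to $f$ in the identity component (which suffices for deducing \eqref{stationary main}) or justify spanning-preservation separately. Second, your extraction of (i) from \eqref{minimality KE against diffeos} via an auxiliary volume-fixing field $Y$, and of (ii) from (i) via Allard's theorem, match the paper's arguments and are fine.
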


\begin{remark}\label{remark mink}
  {\rm Although we do not pursue this point here, we mention that we would expect $(K,E)$ to be a proper minimizer of $\F$ among pairs $(K',E')\in\KK$ with $|E'|=\e$ (and not just when $K'=f(K)$ for a diffeomorphism $f$, as proved in \eqref{minimality KE against diffeos}). To show this we would need to approximate in energy a generic $(K',E')$ by competitors $\{F_j\}_j$ for $\psi(\e)$. The natural {\it ansatz} for this approximation would be taking $F_j=U_{\eta_j}(K'\cup E')\setminus I_{\eta_j}(K'\cap E')$ for $\eta_j\to 0^+$, where $U_\eta$ denotes the {\it open} $\eta$-neighborhood of a set. The convergence of this approximation is delicate, and can be made to work by elaborating on the ideas contained in \cite{ambrosiocolevilla,villa} at least for $(K',E')$ in certain subclasses of $\KK$.}
\end{remark}

\begin{remark}
  {\rm Theorem \ref{thm basic regularity} points at two interesting free boundary problems. The first problem concerns the size and properties of $\pa E\setminus\pa^*E$, which is the transition region between constant and zero mean curvature; similar free boundary problems (on graphs rather than on unconstrained surfaces) have been considered, e.g., in \cite{cjk,caffasilvasavinDOUBLE,caffasilvasavinDOUBLE2}. The second problem concerns the wetted region $\pa\Om\cap\pa E$, which could either be $\H^n$-negligible or not, recall Figure \ref{fig example23}: in the former case, $\pa\Om\cap\pa E$ should be $(n-1)$-dimensional, while in the latter case $\pa\Om\cap\pa E$ should be a set of finite perimeter inside $\pa\Om$, and Young's law $\nu_\Om\cdot\nu_E=0$ should hold at generic boundary points of $\pa\Om\cap\pa E$ relative to $\pa\Om$; see for example \cite{dephilippismaggiCAP-ARMA,dephilippismaggiCAP-CRELLE}.}
\end{remark}

\noindent {\bf Convergence towards Plateau's problem}: The next theorem establishes the nature of Plateau's problem $\ell$ as the singular limit of the capillarity problems $\psi(\e)$ as $\e\to0^+$.

\begin{theorem}
  [Plateau's problem as a singular limit of capillarity problems]\label{thm convergence as eps goes to zero} If $\ell<\infty$, $\pa W$ be smooth, and \eqref{hp on W and C 0} and \eqref{hp on W and C} hold, then $\psi$ is lower semicontinuous on $(0,\infty)$ and
  \begin{equation}
    \label{convergence of optimal value}
    \lim_{\e\to 0^+}\psi(\e)=2\,\ell\,.
  \end{equation}
  In addition, if $\{(K_h,E_h)\}_h$ is a sequence of generalized minimizers of $\psi(\e_h)$ for $\e_h\to 0^+$ as $h\to\infty$, then there exists a minimizer $S$ in $\ell$ such that, up to extracting subsequences and as $h\to\infty$,
  \begin{equation}
    \label{weak star convergence of gen minimizers}
      \H^n\llcorner(\Om\cap\pa^*E_h)+2\,\H^n\llcorner(K_h\setminus\pa^*E_h)\weakstar\,2\,\H^n\llcorner S\,,\qquad\mbox{as Radon measures in $\Om$}\,.
  \end{equation}
\end{theorem}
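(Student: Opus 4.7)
The explicit thickening construction in Theorem~\ref{thm lsc} already delivers $\psi(\e)\le 2\ell+C\e^{n/(n+1)}$, hence $\limsup_{\e\to 0^+}\psi(\e)\le 2\ell$. To obtain the matching lower bound and the measure convergence \eqref{weak star convergence of gen minimizers}, I would fix a sequence of generalized minimizers $(K_h,E_h)$ of $\psi(\e_h)$ with $\e_h\to 0^+$ and set $\mu_h:=\H^n\llcorner(\Om\cap\pa^*E_h)+2\,\H^n\llcorner(K_h\setminus\pa^*E_h)$, a Radon measure on $\Om$ of total mass $\mu_h(\Om)=\psi(\e_h)\le 2\ell+o(1)$. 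Standard Radon measure compactness extracts a subsequence with $\mu_h\weakstar\mu$, and I let $S:=\spt(\mu)\cap\Om$, observing at once that $\mu(\Om)\le\liminf_h\psi(\e_h)$. The remaining goal is to show that $S$ is an $\ell$-minimizer and that $\mu=2\,\H^n\llcorner S$.

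\textbf{Density lower bound at the limit.} I would first prove that $\mu$ has $n$-density at least $2$ at $\H^n$-a.e.\ point of $S$. Where $K_h\setminus\pa^*E_h$ concentrates, this is automatic from the multiplicity built into $\mu_h$. On the remaining portion, where $\pa^*E_h$ concentrates, I would exploit $|E_h|=\e_h\to 0$: a localized isoperimetric analysis forces any cluster of $\pa^*E_h$ carrying positive $\H^n$ mass inside a small ball to bound thin bubbles whose two opposing sheets lie at vanishing distance, collapsing in the weak-$*$ limit onto a single rectifiable piece of $S$ with combined multiplicity $2$. Federer's density theorem then yields $2\,\H^n(S)\le\mu(\Om)$.

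\textbf{Persistence of the spanning condition.} The main obstacle is to show $S\in\Sc$. Suppose by contradiction that $S\cap\g=\emptyset$ for some $\g\in\C$: compactness of $\g$ and relative closedness of $S$ in $\Om$ give $r>0$ with $I_r(\g)\cap S=\emptyset$, so $\mu(I_r(\g))=0$ and, by weak-$*$ convergence on open sets, $\mu_h(I_{r/2}(\g))\to 0$. I would then run the slicing/Sard argument of Harrison--Pugh \cite{harrisonpughACV}, as reformulated in \cite{DLGM} for homotopically closed spanning classes: the smallness of $\H^n(K_h\cap I_{r/2}(\g))$ together with the $\H^n$-rectifiability of $K_h$ produces, via a coarea-type averaging over a tubular foliation, a small deformation of $\g$ to a loop $\g'$, homotopic to $\g$ inside $I_{r/2}(\g)\subset\Om$ (hence $\g'\in\C$), that entirely misses $K_h$ for $h$ large. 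This contradicts the $\C$-spanning property of $K_h$ and is, in my view, the delicate technical step.

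\textbf{Conclusion and lower semicontinuity of $\psi$.} Combining the previous steps yields
\[
 2\ell\le 2\,\H^n(S)\le\mu(\Om)\le\liminf_h\psi(\e_h)\le\limsup_h\psi(\e_h)\le 2\ell\,,
\]
so $S$ is a minimizer in $\ell$, all inequalities are equalities, $\mu=2\,\H^n\llcorner S$, and $\psi(\e_h)\to 2\ell$. A Urysohn argument upgrades this to \eqref{convergence of optimal value} and \eqref{weak star convergence of gen minimizers}. Finally, lower semicontinuity of $\psi$ at any fixed $\e>0$ is obtained by running the same compactness-plus-spanning scheme on minimizers $(K_h,E_h)$ of $\psi(\e_h)$ with $\e_h\to\e$: extracting $E_h\to E$ in $L^1$ with $|E|=\e$, and passing the spanning condition to the limit via the argument above, one gets a pair $(K,E)\in\KK$ admissible for $\psi(\e)$ with $\F(K,E)\le\mu(\Om)\le\liminf_h\psi(\e_h)$, hence $\psi(\e)\le\liminf_h\psi(\e_h)$.
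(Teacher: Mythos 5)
The proposal has the right overall shape — extract $\mu_h\weakstar\mu$, prove a density-$\ge 2$ bound on $S=\Om\cap\spt\mu$, pass the spanning condition to the limit, and compare with the upper bound — but it omits the central technical obstruction that the paper flags explicitly at the end of the introduction: \emph{at vanishing volume there are no non-trivial limit sets around which to nucleate volume-fixing variations}. Every lower density and rectifiability estimate on $\mu$ (both the $\mu(B_r(x))\ge c\,r^n$ bound and the monotonicity needed for Preiss) comes from comparing $E_{h,j}$ with its cup and cone competitors, and these local variations change volume. In Theorem~\ref{thm lsc} that volume loss is repaired through the uniform volume-fixing diffeomorphisms built from the nucleation lemma, but those require $|E_{h,j}|\ge\e_0/2>0$ along the sequence; here $|E_h|=\e_h\to 0$, so that step collapses. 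Your ``localized isoperimetric analysis'' claiming that thin bubbles collapse onto a multiplicity-2 rectifiable set is exactly the \emph{conclusion} of the collapsing Lemma~\ref{lemma llb}, but that lemma has hypotheses \eqref{llb1}–\eqref{llb2} which are themselves derived from the cup/cone comparisons you have not justified. As written there is no mechanism in your argument to obtain these estimates.

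The paper's resolution is to promote lower semicontinuity of $\psi$ on $(0,\infty)$ — which you treat as a tail-end side remark — into the main tool for the lower bound. Once $\psi$ is l.s.c.\ on $(0,\infty)$, any local competitor $F_{h,j}$ with $|F_{h,j}|\to\s_h>0$ satisfies $\psi(\s_h)\le\H^n(\Om\cap\pa F_{h,j})+{\rm o}(1)$, and the inequality $\limsup_h[\psi(\e_h)-\psi(\s_h)]\le 0$ then plays the role that the strict volume constraint played in Theorem~\ref{thm lsc}. This is what the paper calls ``absolute minimality at vanishing volume,'' and it is the key substitute for volume-fixing variations. A second subtlety you miss is that this argument only works at points $x$ for which $|E_h\setminus B_r(x)|>0$ for infinitely many $h$; the paper introduces the function $g(x)$ measuring this radius and shows $\{g=-\infty\}$ is at most a single point, outside of which the density estimates hold. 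That exceptional point is not a technicality — it is exactly where all of $E_h$ can shrink to (as in the Example~\ref{example tripe sing} triple junction), and it has to be excised before applying Preiss' theorem.

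Finally, the contradiction argument you sketch for persistence of the spanning condition is both vague and unnecessary: since $\spt\mu_h=K_h$ and $\mu_h\weakstar\mu$ with a uniform lower density bound, Lemma~\ref{statement K spans} (imported directly from \cite{DLGM}) gives the $\C$-spanning property of $\Om\cap\spt\mu$ immediately, with no slicing or Sard argument required.
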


\begin{remark}
  {\rm The behavior of $\psi(\e)-2\,\ell$ as $\e\to 0^+$ is expected to depend heavily on whether minimizers of $\ell$ have or do not have singularities, as noticed in \eqref{psi eps quadro expected} and \eqref{psip primo meno infinito}. In particular, we expect $\psi'(\e)\to 0^+$ only in special situations: when this happens, we have a vanishing mean curvature approximation of Plateau's problem which is related to Rellich's conjecture, see e.g. \cite{breziscoronRELL}.}
\end{remark}

\begin{remark}\label{remark L1}
  {\rm The Hausdorff convergence of $K_h$ to $S$ is not immediate (nor is the convergence in varifolds sense). Given \eqref{weak star convergence of gen minimizers}, Hausdorff convergence would follow from an area lower bound on $K_h$. In turn, this could be deduced (thanks to area monotonicity) from a uniform $L^p$-bound, for some $p>n$, on the mean curvature vectors $\vec{H}_{V_h}$ of the integer varifolds $V_h$ supported on $K_h$, with multiplicity $2$ on $K_h\setminus\pa^*E_h$, and multiplicity $1$ on $\pa^*E_h$. Notice however that, by \eqref{stationary main}, if $\l_h$ is the Lagrange multiplier of $(K_h,E_h)$, then $\vec{H}_{V_h}=\l_h\,\nu_{E_h}\,1_{\pa^*E_h}$,  so that, even when $n=1$, the only uniform $L^p$-bound that can hold is the one with $p=1$; see Example \ref{example tripe sing}.}
\end{remark}

\noindent {\bf Proofs}: We approach Theorem \ref{thm lsc} with the method introduced in \cite{DLGM} to solve \eqref{plateau problem}, which is now briefly summarized. The idea in \cite{DLGM} is considering a minimizing sequence $\{S_j\}_j$ for $\ell$, which (up to extracting subsequences) immediately leads to a sequence of Radon measures $\mu_j=\H^n\llcorner S_j\weakstar\mu$ as Radon measures in $\Om$, with $S=\spt\,\mu$ $\C$-spanning $W$. By comparing $S_j$ with its cup competitors $S_j'$, see
\begin{figure}
  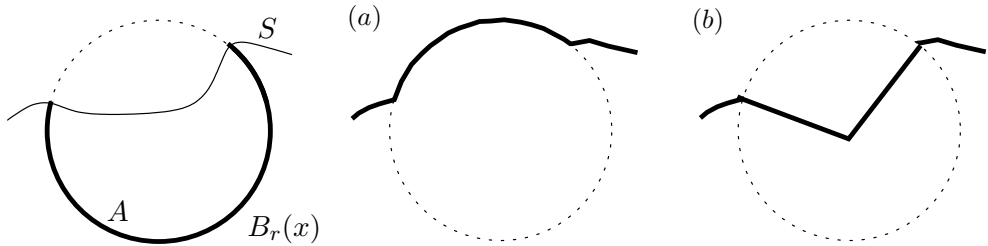\caption{{\small (a) the cup competitor of a set $S$ in $B_r(x)$ relative to an $\H^n$-maximal connected component $A$ of $\pa B_r(x)\setminus S$; (b) the cone competitor of $S$ in $B_r(x)$.}}\label{fig dlgm}
\end{figure}
Figure \ref{fig dlgm}-(a), and then letting $j\to\infty$, it is shown that $\mu(B_r(x))\ge \theta_0(n)\,r^n$ for every $x\in\spt\,\mu$; by comparing $S_j$ with its cone competitors $S_j'$, and then letting $j\to\infty$, it is proved that $r^{-n}\,\mu(B_r(x))$ is increasing in $r$. By Preiss' theorem \cite{preiss,DeLellisNOTES} it follows that $\mu=\theta\,\H^n\llcorner S$ and that $S$ is $\H^n$-rectifiable. Finally, spherical isoperimetry and a geometric argument imply that $\theta\ge 1$ $\H^n$-a.e. on $S$, which in turn suffices to conclude that $S$ is a minimizer in $\ell$ since, by lower semicontinuity,  $\H^n(S)\le\mu(\Om)\le\liminf_j\mu_j(\Om)=\ell$, and because $S$ is in the competition class of $\ell$.

Adapting this approach to a minimizing sequence $\{E_j\}_j$ for $\psi(\e)$ requires the introduction of new ideas. First, cup and cone competitors for $\{E_j\}_j$ have to be defined as {\it boundaries}, a feature that requires taking into consideration two kind of cup competitors, and that also leads to other difficulties. Second, local variations need to be compensated by volume-fixing variations, which must be uniform along the elements of the minimizing sequence. At this stage, we can prove that $\mu_j=\H^n\llcorner(\Om\cap\pa E_j)\weakstar\mu=\theta\,\H^n\llcorner K$ for an $\H^n$-rectifiable set $K$ which is $\C$-spanning $W$. The same argument as in \cite{DLGM} shows that $\theta\ge 1$, and the lower bound $\theta\ge 2$ $\H^n$-a.e. on $K\setminus\pa^*E$ requires a further elaboration which takes into account that we are considering the convergence of boundaries. We cannot conclude that $\F(K,E)=\psi(\e)$ just by lower semicontinuity because clearly $(K,E)$ is not in the competition class of $\psi(\e)$. We thus improve lower semicontinuity by some non-concentration estimates: at infinity, at the boundary and by folding against $K$. The latter are the most interesting ones, and they require a careful comparison argument based on the introduction of a third kind of competitors, called slab competitors. The construction of the various competitors is discussed in section \ref{section five competitors}, while the proof of Theorem \ref{thm lsc} is contained in section \ref{section existence of generalized minimizers}. Slab competitors are also used in the delicate proof of \eqref{minimality KE against diffeos}, whose starting point are some ideas originating in \cite{depauwHardt}, as further developed in \cite{DLGM} when addressing the formulation of Plateau's problem for David's sliding minimizers; see section \ref{section theorem basic regularity}. Finally, in section \ref{section convergence to plateau} we prove Theorem \ref{thm convergence as eps goes to zero}: the main difficulty, explained there in more detail, is that, at vanishing volume, we have no non-trivial local limit sets to be used for constructing uniform volume-fixing variations.

\bigskip

\noindent {\bf Structure of generalized minimizers:} Theorem \ref{thm lsc}, Theorem \ref{thm basic regularity} and Theorem \ref{thm convergence as eps goes to zero} lay the foundations to study the properties of generalized minimizers of $\psi(\e)$. The most intriguing questions are concerned with the relations between the properties of minimizers in Plateau's problem $\ell$, like the presence or the absence of singularities, and the properties of minimizers in $\psi(\e)$ at small $\e$: collapsing vs non-collapsing and the sign of $\l$, limiting behavior of $\l$ as $\e\to 0^+$, dimensionality of the wetted part of the wire, etc. This is of course a very large set of problems, which will require further investigations. In the companion paper \cite{kms2}, we start this kind of study by proving that collapsed minimizers have non-positive Lagrange multipliers, deduce from this property that they satisfy the convex hull property, and lay the ground for the forthcoming paper \cite{kms3}, where we further investigate the regularity of the collapsed set $K\setminus\pa^*E$.

\bigskip

\noindent {\bf Acknowledgement:} We thank an anonymous referee for several useful remarks that helped us improving the quality of the paper. Antonello Scardicchio has contributed with many inspiring discussions to the physical background of this work. This work was completed during the Spring 2019 while FM was first a member of IAS in Princeton, through support from the Charles Simonyi Endowment, and then a visitor of ICTP in Trieste.  All the authors were supported by the NSF grants DMS-1565354, DMS-RTG-1840314 and DMS-FRG-1854344.

\section{Cone, cup and slab competitors, nucleation and collapsing}\label{section five competitors} Section \ref{section notation} contains the notation and terminology used in the paper. Section \ref{section preliminaries} collects some basic properties of $\C$-spanning sets. Sections \ref{section cup first}, \ref{section slab} and \ref{section cone} deal with cup, slab and cone competitors. Section \ref{section nucleation} contains the nucleation lemma for volume-fixing variations, and section \ref{section spherical collapsing} concerns density lower bounds for collapsing sequences of sets of finite perimeter.

\subsection{Notation and terminology}\label{section notation} We denote by $|A|$ and $\H^s(A)$ the Lebesgue and the $s$-dimensional Hausdorff measures of $A\subset\R^{n+1}$, by $I_\eta(A)$ and $U_\eta(A)$ the closed and open $\eta$-neighborhoods of $A$, by $B_r(x)$ the open ball of center at $x$ and radius $r$. We work in the framework of \cite{SimonLN,AFP,maggiBOOK}. Given $k\in\N$, $1\le k\le n$, a Borel set $M\subset\R^{n+1}$ is {\bf countably $\H^k$-rectifiable} if it is covered by countably many Lipschitz images of $\R^k$; it is {\bf (locally) $\H^k$-rectifiable} if, in addition, $M$ is (locally) $\H^k$-finite. If $M$ is locally $\H^k$-rectifiable, then for $\H^k$-a.e. $x\in M$ there exists a unique $k$-plane $T_xM$ such that, as $r\to 0^+$, $\H^k\llcorner(M-x)/r\weakstar\H^k\llcorner T_xM$ as Radon measures in $\R^{n+1}$; $T_xM$ is called the {\bf approximate tangent plane to $M$ at $x$}. Given a Lipschitz map $f:\R^{n+1}\to\R^{n+1}$, we denote by $J^Mf$ its {\bf tangential jacobian along $M$}, so that if $f$ is smooth and $f(x)=x+t\,X(x)+{\rm o}(t)$ in $C^1$ as $t\to 0^+$, then $J^Mf=1+t\,\Div^MX+{\rm o}(t)$ where $\Div^M X$ is the {\bf tangential divergence of $X$ along $M$}; moreover, $M$ has {\bf distributional mean curvature vector $\vec{H}\in L^1_{\rm loc}(U;\H^k\llcorner M)$ in $U$} open, if
\[
\int_M\,\Div^M\,X\,d\H^k=\int_M\,X\cdot\vec{H}\,d\H^k\,,\qquad\forall X\in C^\infty_c(U;\R^{n+1})\,,
\]
see \cite[Sections 8 and 9]{SimonLN}. A Borel set $E\subset\R^{n+1}$ has {\bf finite perimeter} if there exists an $\R^{n+1}$-valued Radon measure on $\R^{n+1}$, denoted by $\mu_E$, such that $\langle\mu_E,X\rangle=\int_E\Div X$ whenever $X\in C^1_c(\R^{n+1};\R^{n+1})$ and $P(E;\R^{n+1})=|\mu_E|(\R^{n+1})<\infty$. The set of points $x\in\R^{n+1}$ such that $|\mu_E|(B_r(x))^{-1}\,\mu_E(B_r(x))\to\nu_E(x)\in\SS^n$ as $r\to 0^+$ is denoted by $\pa^*E$, and called the {\bf reduced boundary $\pa^*E$} of $E$. Then $\mu_E=\nu_E\,\H^n\llcorner\pa^*E$, $\pa^*E$ is $\H^n$-rectifiable in $\R^{n+1}$, and $T_x\pa^*E=\nu_E(x)^\perp$ for every $x\in\pa^*E$. The {\bf set $E^{(t)}$ of points of density $t\in[0,1]$ of $E$} is given by those $x\in \R^{n+1}$ with $|E\cap B_r(x)|/|B_r(x)|\to t$ as $r\to 0^+$, and (see, e.g., see \cite[Theorem 16.2]{maggiBOOK}),
\begin{equation}
  \label{federers theorem}
  \mbox{$\{\pa^*E,E^{(0)},E^{(1)}\}$ is a partition of $\R^{n+1}$ modulo $\H^n$}\,.
\end{equation}
Federer's criterion \cite[4.5.11]{FedererBOOK} states that if the {\bf essential boundary} $\pa^\ee E=\R^{n+1}\setminus(E^{(0)}\cup E^{(1)})$ is $\H^n$-finite, then $E$ is of finite perimeter in $\R^{n+1}$. If $E$ is open, then $\pa^\ee E\subset\pa E$: hence, if $E\in\E$ and $\H^n(\pa\Om)<\infty$, then $E$ is of finite perimeter.

\subsection{Some preliminary results}\label{section preliminaries} In the following, $W$ is a compact set, $\C$ a spanning class for $W$ and $\Om=\R^{n+1}\setminus W$.

\begin{lemma}\label{statement K spans}
 If $\{K_j\}_j$ are relatively closed sets in $\Om$, such that each $K_j$ is $\C$-spanning $W$ and $\H^n\llcorner K_j\weakstar\mu$ as Radon measures in $\Om$, then $K=\Om \cap \spt\mu$ is $\C$-spanning $W$.
\end{lemma}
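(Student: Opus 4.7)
I would argue by contradiction, assuming that there exists $\gamma\in\C$ with $\gamma\cap K=\emptyset$. Since $\gamma(\SS^1)$ is compact in $\Om$ and $K=\Om\cap\spt\mu$ is relatively closed in $\Om$, I can select $\eta>0$ so that $\ov{I_\eta(\gamma)}$ is compact, contained in $\Om$, and disjoint from $\spt\mu$; in particular $\mu(\ov{I_\eta(\gamma)})=0$. The tubular neighborhood theorem then provides $\rho\in(0,\eta)$ and a smooth diffeomorphism $F:\SS^1\times D_\rho\to T$ onto an open neighborhood $T$ of $\gamma$, with $F(\cdot,0)=\gamma$ and $D_\rho$ the open disk of radius $\rho$ in $\R^n$; up to shrinking $\rho$, we may assume $\ov{T}\subset I_\eta(\gamma)$.

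For each $y\in D_\rho$, set $\gamma_y:=F(\cdot,y):\SS^1\to T$. This is a smooth embedding of $\SS^1$ into $\Om$, homotopic to $\gamma$ within $T\subset\Om$ through the continuous family $(t,s)\mapsto F(s,ty)$. By the homotopic closure of $\C$, it follows that $\gamma_y\in\C$ for every $y\in D_\rho$, and the $\C$-spanning assumption on each $K_j$ yields
\begin{equation*}
\H^0(K_j\cap\gamma_y)\ge 1\qquad\forall\,y\in D_\rho,\ \forall\,j.
\end{equation*}

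Let $\pi:=\mathrm{pr}_2\circ F^{-1}:T\to D_\rho$, so that $\pi^{-1}(y)=\gamma_y$; since $\ov{T}$ is compact, $\pi$ is Lipschitz on $T$ with some constant $L$. Eilenberg's coarea inequality applied to $\pi$ and to $K_j\cap T$ (which has finite $\H^n$-measure because $\H^n\llcorner K_j$ is Radon) gives, for a dimensional constant $C=C(n,L)$,
\begin{equation*}
|D_\rho|\;\le\;\int_{D_\rho}\H^0(K_j\cap\gamma_y)\,dy\;\le\;C\,\H^n(K_j\cap T).
\end{equation*}
By weak-$\ast$ convergence of Radon measures in $\Om$ and compactness of $\ov{T}\subset\Om$,
\begin{equation*}
\limsup_{j\to\infty}\H^n(K_j\cap T)\;\le\;\mu(\ov{T})\;\le\;\mu(\ov{I_\eta(\gamma)})=0,
\end{equation*}
a contradiction with $|D_\rho|>0$.

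\textbf{Main obstacle.} The geometric core of the argument is promoting the single ``offending'' loop $\gamma$ to the $n$-parameter family $\{\gamma_y\}_{y\in D_\rho}$ of offending loops via a tubular parametrization: once this is available, the $\C$-spanning condition distributes a full $n$-dimensional budget of forced intersection points into $T$, which, through the coarea inequality, cannot be reconciled with the vanishing of $\mu$ near $\gamma$. The only point requiring care is to choose the tubular neighborhood inside $\Om$ so that each homotopy $(t,s)\mapsto F(s,ty)$ remains admissible and the homotopic closure of $\C$ may be invoked to conclude that every $\gamma_y$ lies in $\C$.
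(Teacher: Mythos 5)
Your argument is correct and is essentially the proof the paper invokes by citing \cite[Step 2, proof of Theorem 4]{DLGM}: one thickens an offending loop $\gamma$ into a tubular neighborhood of parallel loops $\gamma_y$, uses homotopic closedness to place each $\gamma_y$ in $\C$, and then plays the resulting lower bound from the coarea (Eilenberg) inequality against the vanishing of $\mu$ near $\gamma$. No gaps; the only point worth stating explicitly is that each slice $\gamma_y$ of the tubular diffeomorphism is itself a smooth embedding, which is what the paper's definition of homotopic closedness requires before concluding $\gamma_y\in\C$.
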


\begin{proof} See \cite[Step 2, proof of Theorem 4]{DLGM}.
\end{proof}

\begin{lemma}\label{lemma 10}
  Let $K$ be relatively closed in $\Om$ and let $B_r(x)\cc\Om$. Then $K$ is $\C$-spanning $W$ if and only if, whenever $\g\in\C$ is such that $\g\cap K\setminus B_r(x)=\emptyset$, then there exists a connected component of $\g\cap \cl(B_r(x))$ which is diffeomorphic to an interval, and whose end-points belong to distinct connected components of $\cl(B_r(x))\setminus K$, as well as to distinct components of $\pa B_r(x)\setminus K$.
\end{lemma}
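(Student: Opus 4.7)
The plan is to prove the two implications separately. Sufficiency follows from elementary topology; necessity is handled by contrapositive, via a surgery that modifies $\gamma$ into a homotopic loop missing $K$, violating the spanning hypothesis.

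\emph{Sufficiency.} Let $\gamma \in \C$. If $\gamma \cap K \setminus B_r(x) \ne \emptyset$ there is nothing to prove. Otherwise the local condition supplies a component $I$ of $\gamma \cap \cl(B_r(x))$ diffeomorphic to an interval whose two endpoints lie in distinct components of $\cl(B_r(x)) \setminus K$. Since $I$ is connected and every connected subset of $\cl(B_r(x)) \setminus K$ is contained in a single component of the latter, $I$ cannot avoid $K$; therefore $\gamma \cap K \ne \emptyset$, and $K$ is $\C$-spanning $W$.

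\emph{Necessity (by contrapositive).} Suppose $K$ is $\C$-spanning $W$ and let $\gamma \in \C$ satisfy $\gamma \cap K \setminus B_r(x) = \emptyset$. Since $B_r(x) \cc \Om$, the closed ball is simply connected and $K \cap \cl(B_r(x))$ is compact. Under the standing assumption $\ell < \infty$, no $\gamma \in \C$ is contained in $\cl(B_r(x))$ (otherwise $\gamma$ would be null-homotopic in $\Om$), so the components of $\gamma \cap \cl(B_r(x))$ are either isolated tangency points on $\pa B_r(x)$ (which avoid $K$, as $\gamma \cap K \subset B_r(x)$) or compact arcs with endpoints on $\pa B_r(x) \setminus K$. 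Note also that two points in distinct components of the larger set $\cl(B_r(x)) \setminus K$ automatically lie in distinct components of the smaller set $\pa B_r(x) \setminus K$. Hence, assuming for contradiction that no arc component has the stated property, every arc component $I_\alpha$ must have both endpoints sitting in a common (open, path-connected) component of $\cl(B_r(x)) \setminus K$. I now build a new loop $\gamma'$ by replacing each arc $I_\alpha$ with a smooth embedded arc $J_\alpha \subset \cl(B_r(x)) \setminus K$ joining its endpoints inside their common component, with tangents matching $\gamma$ at the endpoints. A standard transversality and tubular-neighborhood argument in $\R^{n+1}$, $n+1 \ge 2$, lets me choose the $J_\alpha$ pairwise disjoint and disjoint from $\gamma \setminus \cl(B_r(x))$, so that $\gamma' : \SS^1 \to \Om$ is a smooth embedding. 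By construction $\gamma' \cap K = \emptyset$ (outside $\cl(B_r(x))$ because $\gamma \cap K \subset B_r(x)$, inside because $J_\alpha \cap K = \emptyset$), and since each modification is a rel-endpoints deformation inside the simply connected ball $\cl(B_r(x))$, $\gamma'$ is homotopic to $\gamma$ in $\Om$. Homotopic closure of $\C$ gives $\gamma' \in \C$, contradicting that $K$ is $\C$-spanning.

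The main technical obstacle is the surgery step: simultaneously arranging that the replacement arcs $J_\alpha$ are $C^\infty$ with tangents matching $\gamma$ at their endpoints, pairwise disjoint, and disjoint from the outside part of $\gamma$, so that $\gamma'$ is a genuine smooth embedding of $\SS^1$ into $\Om$. Everything else reduces to elementary point-set topology together with the homotopic closure of the spanning class.
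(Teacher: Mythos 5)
The paper does not prove Lemma \ref{lemma 10} at all: it is cited verbatim from \cite[Lemma~10]{DLGM}, so there is no internal proof against which to compare yours. Your architecture, though, is the standard one and is in fact the one DLGM use (as one can see from the paper's own adaptation of the DLGM argument in step one of Lemma~\ref{lemma close by Lipschitz at boundary}): an elementary connectedness argument for sufficiency, and for necessity a surgery that replaces the inessential arcs of $\g$ inside the ball by arcs avoiding $K$, concluding via the homotopic closure of $\C$. The sufficiency direction is correct, and your observation that two points in distinct components of $\cl(B_r(x))\setminus K$ are automatically in distinct components of $\pa B_r(x)\setminus K$ --- so that the conjunction in the statement collapses to the first condition --- is correct and useful.

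The gap is precisely the one you flag as ``the main technical obstacle'', and invoking ``a standard transversality and tubular-neighborhood argument in $\R^{n+1}$, $n+1\ge 2$'' does not close it. Two concrete problems. First, for an arbitrary $r$ the set $\g\cap\cl(B_r(x))$ need not be a finite union of arcs and tangency points: without transversality of $\g$ with $\pa B_r(x)$ it may have infinitely many components, so ``replace each arc $I_\a$'' is an infinite surgery. DLGM handle this by first perturbing to a transversal intersection (this is exactly the role of ``Step two, proof of Lemma 10'' in \cite{DLGM}, as quoted in step one of Lemma~\ref{lemma close by Lipschitz at boundary}); alternatively one can observe that $\g\cap K$ is compact in the open ball $B_r(x)$, hence meets only finitely many components, and replace only those --- but then the replacement arcs must additionally avoid the infinitely many untouched components, which is a further constraint you do not address. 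Second, ``general position'' does not make $1$-dimensional arcs disjoint in $\R^2$: two arcs in a disk with interlaced boundary endpoints are topologically forced to cross, and transversality only reduces their intersection to finitely many points, not to none. Your claim is therefore wrong as stated for $n=1$, which the paper emphatically allows ($n=1$ is the triple-junction example of Figure~\ref{fig example23}). What saves the argument in the plane is not general position but the fact that $\g$ is an \emph{embedded} circle, so the arcs $I_\a$ have a nested, non-interlacing pattern of endpoints on $\pa B_r(x)$; this needs to be stated and used. In short, your plan is the right one, but the surgery step --- whose correctness is the whole content of the lemma --- is asserted via a tool that does not do what you need, and the transversality reduction that makes the construction finite and clean is missing entirely.
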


\begin{proof}
  This is \cite[Lemma 10]{DLGM}.
\end{proof}

\begin{lemma}\label{statement spanning is close by Lipschitz maps}
  If $K$ is $\C$-spanning $W$, $B_r(x)\cc \Om$, and $f:\R^{n+1}\to\R^{n+1}$ is a bi-Lipschitz map with $\{f\ne\id\}\cc B_r(x)$ and $f(B_r(x))\subset B_r(x)$, then $f(K)$ is $\C$-spanning $W$.
\end{lemma}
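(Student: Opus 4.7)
The plan is to reduce to the characterization of $\C$-spanning sets given by Lemma \ref{lemma 10}, after first checking that $f$ restricts to a bi-Lipschitz self-homeomorphism of the closed ball $\cl(B_r(x))$ which is the identity on $\pa B_r(x)$. The condition $\{f\ne\id\}\cc B_r(x)$ forces $f=\id$ on $\R^{n+1}\setminus B_r(x)$, so in particular on $\pa B_r(x)$ and on $\Om\setminus B_r(x)$. Combined with the hypothesis $f(B_r(x))\subset B_r(x)$ and the bijectivity of $f$ (granted by bi-Lipschitzness), a short argument gives $f(B_r(x))=B_r(x)$, $f(\cl(B_r(x)))=\cl(B_r(x))$, and $f(\Om)=\Om$. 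Consequently $f(K)$ is relatively closed in $\Om$, and the two identities
\begin{equation*}
f(K)\cap\pa B_r(x)=K\cap\pa B_r(x),\qquad f(K)\setminus B_r(x)=K\setminus B_r(x)
\end{equation*}
hold.

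Next, I would verify the Lemma \ref{lemma 10} criterion for $f(K)$. Fix an arbitrary $\g\in\C$ with $\g\cap f(K)\setminus B_r(x)=\emptyset$; the second identity above shows that this is equivalent to $\g\cap K\setminus B_r(x)=\emptyset$. Since $K$ is $\C$-spanning, the forward direction of Lemma \ref{lemma 10} applied to $K$ produces a connected component $\alpha$ of $\g\cap\cl(B_r(x))$, diffeomorphic to an interval, with endpoints $a,b\in\pa B_r(x)$ lying in distinct components both of $\cl(B_r(x))\setminus K$ and of $\pa B_r(x)\setminus K$. The first identity immediately yields $\pa B_r(x)\setminus f(K)=\pa B_r(x)\setminus K$, so $a$ and $b$ automatically sit in distinct components of $\pa B_r(x)\setminus f(K)$.

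The only substantive step is to check that $a$ and $b$ also lie in distinct components of $\cl(B_r(x))\setminus f(K)$. I would argue by contradiction: given any path $p\colon[0,1]\to\cl(B_r(x))\setminus f(K)$ joining $a$ to $b$, the composition $f^{-1}\circ p$ would be a path $[0,1]\to\cl(B_r(x))\setminus K$ (using $f^{-1}(\cl(B_r(x)))=\cl(B_r(x))$ and $f^{-1}(f(K))=K$), with endpoints $f^{-1}(a)=a$ and $f^{-1}(b)=b$ since $f^{-1}$ fixes $\pa B_r(x)$, contradicting the separation of $a$ and $b$ in $\cl(B_r(x))\setminus K$ guaranteed by Lemma \ref{lemma 10}. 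Since $\g\in\C$ was arbitrary, the reverse direction of Lemma \ref{lemma 10} then gives that $f(K)$ is $\C$-spanning $W$. The main (and essentially only) obstacle is confirming the invariance of $\cl(B_r(x))$ and $\pa B_r(x)$ under $f$ and $f^{-1}$; once that is in place, the rest is straightforward bookkeeping with the lemma.
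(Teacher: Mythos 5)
Your proof is correct, but it takes a genuinely different route from the paper's. The paper argues directly from the definition of $\C$-spanning: assuming $f(K)$ is not spanning, it picks $\g\in\C$ disjoint from $f(K)$, pulls it back to the continuous embedding $\tilde\g=f^{-1}\circ\g$, which is disjoint from $K$ and homotopic to $\g$ in $\Om$ (the straight-line homotopy stays in the convex ball $B_r(x)$ where $f^{-1}\ne\id$), and then smooths $\tilde\g$ to produce a smooth embedding in $\C$ missing $K$ — contradicting the spanning hypothesis via the homotopy-closedness of $\C$. You instead route everything through the local characterization of Lemma \ref{lemma 10}, applying its forward direction to $K$ and transferring the separation statement to $f(K)$ by composing candidate paths with $f^{-1}$, which fixes $\pa B_r(x)$ pointwise. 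What your approach buys is that the smoothing step and the explicit use of homotopy-closedness disappear (they are absorbed into the proof of Lemma \ref{lemma 10} in \cite{DLGM}); what it costs is invoking that lemma in both directions and a small topological point you should make explicit: since $\cl(B_r(x))\setminus f(K)$ is relatively open in the locally path-connected set $\cl(B_r(x))$, its connected components coincide with its path components, so membership in the same component can indeed be tested by a path $p$ as you do. With that remark added, the bookkeeping with $f(\cl(B_r(x)))=\cl(B_r(x))$, $f|_{\pa B_r(x)}=\id$, and $f^{-1}(f(K))=K$ is complete and the argument closes.
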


\begin{proof}
  By $f(K)\setminus B_r(x)=K\setminus B_r(x)$, if $f(K)$ is not $\C$-spanning $W$, then there exists $\g\in\C$ with $\g\cap K\setminus B_r(x)=\emptyset$ such that $\g\cap f(K)=\emptyset$. Hence, the curve $\tilde \g := f^{-1} \circ \g$ is a continuous embedding of $\mathbb{S}^1$ in $\Omega$, homotopic to $\g$ in $\Omega$, and such that $\tilde \g \cap K = \emptyset$. Since $\tilde \g$ and $W$ are compact and $K$ is closed, $\tilde \g$ has positive distance from $K\cup W$, and by smoothing out $\tilde\g$ we define a smooth embedding $\hat \g$ of $\mathbb{S}^1$ into $\Omega$, disjoint from $K$, and homotopic to $\tilde \g$ (and therefore to $\g$) in $\Omega$, a contradiction.
\end{proof}

\begin{lemma}
  \label{lemma close by Lipschitz at boundary}
  If $\pa\Om$ is smooth, then there exists $r_0>0$ with the following property. If $x\in\pa \Om$, $\Om \subset \Om'$, $f:\cl(\Om)\to\cl(\Om')=f(\cl(\Om))$ is a homeomorphism with $f(\pa\Om)=\pa\Om'$, $\{f\ne\id\}\cc B_{r_0}(x)$, and $f(B_{r_0}(x)\cap\cl(\Omega))=B_{r_0}(x)\cap\cl(\Omega')$, and if $K$ is $\C$-spanning $W$, then $K'=f(K\cap\Om^*)$ is relatively closed in $\Om$ and is $\C$-spanning $W$, where $\Om^*=f^{-1}(\Om)$.
\end{lemma}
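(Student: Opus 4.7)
The proof mirrors that of Lemma \ref{statement spanning is close by Lipschitz maps}, with extra care because $f$ is defined only on $\cl(\Om)$ and pushes $\pa\Om$ onto $\pa\Om'$, so the usual ``extend by identity to $\R^{n+1}$'' trick does not directly apply. Using the smoothness of $\pa\Om$, I first fix $r_0>0$ small enough that, for every $x\in\pa\Om$, a smooth straightening chart sends $B_{r_0}(x)\cap\cl(\Om)$ diffeomorphically onto a closed half-ball; in particular $B_{r_0}(x)\cap\Om$ is diffeomorphic to an open half-ball, and is therefore simply connected.

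\textbf{Closedness of $K'$.} Since $f(\pa\Om)=\pa\Om'$ and $f$ is a homeomorphism of closures, $f^{-1}(\Om')=\Om$; combined with $\Om\subset\Om'$ this gives $\Om^*=f^{-1}(\Om)\subset\Om$. Thus $K\cap\Om^*$ is relatively closed in the (open) subset $\Om^*$ of $\Om$, and pushing forward by the homeomorphism $f|_{\Om^*}\colon\Om^*\to\Om$ shows that $K'=f(K\cap\Om^*)$ is relatively closed in $\Om$.

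\textbf{Spanning, by contradiction.} Suppose $K'$ is not $\C$-spanning $W$ and pick $\g\in\C$ with $\g\cap K'=\emptyset$, and set $\tilde\g=f^{-1}\circ\g$. Since $f=\id$ outside $B_{r_0}(x)$, $\tilde\g$ and $\g$ agree on $\SS^1\setminus\g^{-1}(B_{r_0}(x))$, and $\g\cap K\setminus B_{r_0}(x)=\g\cap K'\setminus B_{r_0}(x)=\emptyset$. One then checks $\tilde\g\subset\Om$: if $p\in\tilde\g\cap\pa\Om$, then $f(p)\in\pa\Om'$, contradicting $f(p)\in\g\subset\Om\subset\Om'$. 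Similarly $\tilde\g\cap K=\emptyset$: a point $p$ in this intersection satisfies $f(p)\in\g$, so that either $p\in\Om^*$ (giving $f(p)\in K'\cap\g=\emptyset$) or $p\notin\Om^*$ (giving $f(p)\notin\Om\supset\g$), both contradictions.

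\textbf{Homotopy, smoothing, and main obstacle.} The main technical point, where the smoothness of $\pa\Om$ is essential, is building a homotopy \emph{in $\Om$} between $\tilde\g$ and $\g$. On each connected arc of $\g^{-1}(B_{r_0}(x))$ the two curves share endpoints on $\pa B_{r_0}(x)\cap\Om$ and both stay in $B_{r_0}(x)\cap\Om$; since the latter is diffeomorphic to an open half-ball via the straightening chart, a straight-line interpolation in chart coordinates provides the arc-wise homotopy inside $\Om$, and gluing with the identity outside $B_{r_0}(x)$ yields a continuous homotopy $F\colon[0,1]\times\SS^1\to\Om$ from $\tilde\g$ to $\g$. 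Finally, since $\tilde\g$ is compact and disjoint from the closed set $K\cup W\subset\R^{n+1}$, one has $\dist(\tilde\g,K\cup W)>0$, and a standard convolution smoothing produces a smooth embedding $\hat\g$ of $\SS^1$ into $\Om$, disjoint from $K$ and homotopic to $\tilde\g$ (hence to $\g$) in $\Om$. By homotopy-closedness of $\C$, $\hat\g\in\C$, contradicting that $K$ is $\C$-spanning $W$.
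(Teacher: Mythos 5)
Your proof is correct, but it takes a genuinely different route from the paper's. You extend the interior argument of Lemma \ref{statement spanning is close by Lipschitz maps} directly to the boundary: you check that $\tilde\g=f^{-1}\circ\g$ lands in $\Om$, misses $K$ (your two-case analysis on whether the offending point lies in $\Om^*$ is exactly the right way to handle the fact that $K'$ only records $f(K\cap\Om^*)$), and is homotopic to $\g$ in $\Om$ by straight-line interpolation in a straightening chart --- the boundary analogue of the convexity of $B_r(x)$ that makes the interior lemma work. Note that the hypothesis $f(B_{r_0}(x)\cap\cl(\Om))=B_{r_0}(x)\cap\cl(\Om')$ is what guarantees your claim that $\tilde\g$ stays in $B_{r_0}(x)\cap\Om$ over each arc of $\g^{-1}(B_{r_0}(x))$. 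The paper instead first proves (step one) a boundary version of Lemma \ref{lemma 10}: $K$ is $\C$-spanning iff every $\g\in\C$ with $\g\cap K\setminus B_{r_0}(x)=\emptyset$ has an arc in $\cl(B_{r_0}(x))$ whose endpoints lie in \emph{distinct} connected components of $\Om\cap\cl(B_{r_0}(x))\setminus K$; the contradiction then comes from a separation argument: $f$ fixes those endpoints, maps components to components, and $\Om\cap\cl(B_{r_0}(x))\setminus K'\subset\Om'\cap\cl(B_{r_0}(x))\setminus f(K)$. Both arguments rest on the same topological input (the chart makes $\Om\cap B_{r_0}(x)$ simply connected, indeed convex in coordinates); the paper's detour buys a reusable characterization of boundary spanning (invoked again in Lemma \ref{lemma cup exterior}), while yours is shorter and self-contained. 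Two points to tighten, neither a gap: the straightening chart should be taken on a neighborhood of $\cl(B_{r_0}(x))\cap\cl(\Om)$ rather than on $B_{r_0}(x)\cap\cl(\Om)$, since the arc endpoints lie on $\pa B_{r_0}(x)\cap\Om$ and the interpolation must be continuous up to them; and the gluing over the possibly infinitely many arcs of $\g^{-1}(B_{r_0}(x))$ is best justified by the pasting lemma applied on the closed set $\g^{-1}(\cl(B_{r_0}(x)))$, where the interpolation degenerates to a constant on $\g^{-1}(\pa B_{r_0}(x))$.
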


\begin{proof}
  {\it Step one}: We show that, for $K$ relatively closed in $\Om$ and $B_{r_0}(x)$ as in the statement, $K$ is $\C$-spanning $W$ if and only if, whenever $\g\in\C$ is such that $\g\cap K\setminus B_{r_0}(x)=\emptyset$, then there exists a connected component of $\g\cap \cl(B_{r_0}(x))$, diffeomorphic to an interval, and whose end-points belong to distinct connected components of $\Om\cap\cl(B_{r_0}(x))\setminus K$. We only prove the ``only if'' part. First of all, we notice that $\gamma$ cannot be contained in $\Omega\cap B_{r_0}(x)$, because $r_0$ can be chosen small enough to ensure that $\Omega\cap B_{r_0}(x)$ is simply connected, and because $\ell<\infty$ implies that no element of $\C$ is homotopic to a constant. Arguing as in \cite[Step two, proof of Lemma 10]{DLGM} we can assume that $\g$ and $\pa B_{r_0}(x)$ intersect transversally, so that there exist finitely many disjoint $I_i=[a_i,b_i]\subset\SS^1$ such that $\g\cap\cl(B_{r_0}(x))=\bigcup_i\g(I_i)$ with $\g\cap\pa B_{r_0}(x)=\bigcup_i\{\g(a_i),\g(b_i)\}$ and $\g\cap B_{r_0}(x)=\bigcup_i\g((a_i,b_i))$. Assume by contradiction that for each $i$ there exists a connected component $A_i$ of $\Om\cap\cl(B_{r_0}(x))\setminus K$ such that $\g(a_i),\g(b_i)\in A_i$. If $r_0$ is small enough, then $\cl(\Om\cap B_{r_0}(x))$ is diffeomorphic to $\cl(B_1(0)\cap\{x_1>0\})$ through a diffeomorphism mapping $B_{r_0}(x)\cap\pa\Om$ into $B_1(0)\cap\{x_1=0\}$. Using this fact and the connectedness of each $A_i$, we define smooth embeddings $\tau_i:I_i\to A_i$ with $\tau_i(a_i)=\g(a_i)$, $\tau_i(b_i)=\g(b_i)$ and $\tau_i$ homotopic in $\Om\cap B_{r_0}(x)$ to the restriction of $\gamma$ to $I_i$. Moreover, this can be done with $\tau_i(I_i)\cap\tau_j(I_j)=\emptyset$. The new embedding $\bar\g$ of $\SS^1$ obtained by replacing $\g$ with $\tau_i$ on $I_i$ is thus homotopic to $\g$ in $\Om$, and such that $\bar\g\cap K=\emptyset$, a contradiction.

  \medskip

  \noindent {\it Step two}: Since $K\cap\Om^*$ is relatively closed in $\Om^*$, $K'=f(K\cap\Om^*)$ is relatively closed in $\Om=f(\Om^*)$. Should $K'$ not be $\C$-spanning $W$, given that $K'\setminus B_{r_0}(x)=K\setminus B_{r_0}(x)$, we could find $\g\in\C$ with $\g\cap K\setminus B_{r_0}(x)=\emptyset$ and $\g\cap K'=\emptyset$. By step one, there would be a connected component $\s$ of $\g\cap \cl(B_{r_0}(x))$, diffeomorphic to an interval, and such that: (i) the end-points $p$ and $q$ of $\s$ (which lie on $\pa B_{r_0}(x)$) belong to distinct connected components of $\Om\cap\cl(B_{r_0}(x))\setminus K$; and (ii) $p$ and $q$ belong to the same connected component of $\Om\cap\cl(B_{r_0}(x))\setminus K'$. Since $f$ is a homeomorphism, $f(p)=p$, and $f(q)=q$, by (i) we would find that $p$ and $q$ belong to {\it distinct} connected components of
  \[
  f\big(\Om\cap\cl(B_{r_0}(x))\setminus K\big)=\Om'\cap\cl(B_{r_0}(x))\setminus f(K)\;
  \]
  while, by (ii), there would be an arc connecting $p$ and $q$ in $\Om\cap\cl(B_{r_0}(x))\setminus K'$, where
  \begin{eqnarray*}
  \Om\cap\cl(B_{r_0}(x))\setminus K'&=&\Om\cap\cl(B_{r_0}(x))\setminus f(K\cap\Om^*)
  \\
  &=&\Om\cap\cl(B_{r_0}(x))\setminus f(K)
  \,\,\subset\,\,\Om'\cap\cl(B_{r_0}(x))\setminus f(K)\,,
  \end{eqnarray*}
  and hence $p$ and $q$ would belong to a {\it same} component of $\Om'\cap\cl(B_{r_0}(x))\setminus f(K)$.
\end{proof}

\subsection{Cup competitors}\label{section cup first} Given $E\in\E$, $B_r(x)\cc\Om$ and a connected component $A$ of $\pa B_r(x)\setminus\pa E$, cup competitors are used to compare $\H^n(B_r(x)\cap\pa E)$ with $\H^n(\pa B_r(x)\setminus A)$. The construction is more involved than in the case of Plateau's problem considered in \cite{DLGM} as we need to construct cup competitors as {\it boundaries}, and we have to argue differently depending on whether $A\cap E=\emptyset$ or $A\subset E$.

\begin{lemma}[Cup competitors]\label{lemma cup competitor first kind}
  Let $E\in\E$ be such that $\Om \cap \pa E$ is $\C$-spanning $W$, let $x\in\Om$, $0<r<\dist(x,\pa\Om)$, and let $A$ be a connected component of $\pa B_r(x)\setminus\pa E$. Assume that $\pa E\cap\pa B_r(x)$ is $\H^{n-1}$-rectifiable. Then, for every $\eta \in \left( 0, r/2 \right)$ there exists a set $F = F_\eta \in \E$ so that $\Omega \cap \pa F$ is $\C$-spanning $W$, and
\begin{eqnarray}\label{cup fuori da br chiusa}
&& \pa F\setminus\cl(B_r(x))=\pa E\setminus\cl(B_r(x))\,,
  \\
  \label{cup buccia}
&& \lim_{\eta\to 0^+} \H^n \big( (\pa B_r (x) \cap \pa F) \, \Delta \, (\pa B_r (x) \setminus A) \big) = 0\,,
  \\
  \label{cup area totale}
 && \limsup_{\eta\to 0^+}\H^n(\Om\cap\pa F)\le\H^n\big(\Omega \cap \pa E\setminus B_r(x)\big)+2\,\H^n(\pa B_r(x)\setminus A)\,.
  \end{eqnarray}
  Moreover,
  \begin{itemize}

\item[(i)]   If $A\cap E=\emptyset$, then

\begin{eqnarray}
    \label{cup first br area}
    \limsup_{\eta\to 0^+}\H^n(B_r(x)\cap\pa F)\le\H^n\Big(\pa B_r(x)\setminus \big(A\cup (E\cap\pa B_r)\big)\Big)\,;
  \end{eqnarray}

  \item[(ii)] If $A\subset E$, then

\begin{eqnarray}
  \label{cup second br area}
    \limsup_{\eta\to 0^+}\H^n(B_r(x)\cap\pa F)\le\H^n\big(E\cap\pa B_r(x)\setminus A\big)\,.
  \end{eqnarray}

  \end{itemize}

  \end{lemma}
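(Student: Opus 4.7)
My approach is to construct $F_\eta$ as an explicit thin ``cone-shell'' modification of $E$ inside $\cl(B_r(x))$, leaving $E$ unchanged outside $\cl(B_r(x))$. First, decompose $\pa B_r(x)\setminus\pa E$ into its connected components $\{A_j\}_{j\ge 1}$, each open in $\pa B_r(x)$. Since $E$ is open and $A_j\cap\pa E=\emptyset$ by construction, the partition $A_j=(A_j\cap E)\sqcup(A_j\setminus\cl(E))$ is by two relatively open sets, and connectedness forces each $A_j$ to lie either in $E$ or in the interior of $E^c$. Label $A=A_1$ and $T=\pa B_r(x)\setminus A$. Given $\eta\in(0,r/2)$, in case (i) ($A\cap E=\emptyset$) let $C$ be the open cone with apex $x$ over $\bigcup\{A_j:j\ge 2,\,A_j\cap E=\emptyset\}$ and set $G_\eta=C\cap(B_r(x)\setminus\cl(B_{r-\eta}(x)))$ and $F_\eta=(E\setminus\cl(B_r(x)))\cup G_\eta$; in case (ii) ($A\subset E$) let $C'$ be the open cone over $\bigcup\{A_j:j\ge 2,\,A_j\subset E\}$, set $D_\eta=C'\cap(B_r(x)\setminus\cl(B_{r-\eta}(x)))$ and $F_\eta=(E\setminus\cl(B_r(x)))\cup(B_r(x)\setminus\cl(D_\eta))$. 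Both constructions give open sets, and $\pa F_\eta$ is $\H^n$-rectifiable since it is contained in $\pa E\cup\pa B_r(x)\cup\pa B_{r-\eta}(x)\cup(\text{cone with apex }x\text{ over }\pa E\cap\pa B_r(x))$, the last piece being $\H^n$-rectifiable by the hypothesis on $\pa E\cap\pa B_r(x)$.

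Properties \eqref{cup fuori da br chiusa} and \eqref{cup buccia} then follow directly: $F_\eta$ agrees with $E$ outside $\cl(B_r(x))$, and checking whether $F_\eta$ is present or absent just inside $B_r(x)$ along each component $A_j$ (distinguishing the four cases $j=1$ vs $j\ge 2$ and $A_j\subset E$ vs $A_j\cap E=\emptyset$) shows $\pa F_\eta\cap\pa B_r(x)=T\setminus(\pa E\cap\pa B_r(x))$ for every $\eta$, hence $T$ modulo $\H^n$-null sets. For the measure estimates, write $\pa F_\eta$ as the disjoint union of $\pa E\setminus\cl(B_r(x))$, $\pa F_\eta\cap\pa B_r(x)$, and $\pa F_\eta\cap B_r(x)$. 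The first contributes $\H^n(\Om\cap\pa E\setminus B_r(x))$ and the second contributes $\H^n(\pa B_r(x)\setminus A)$. The third decomposes into an inner spherical cap $\pa B_{r-\eta}(x)\cap C$ (resp.\ $\cap C'$) and a lateral cone wall; a direct parametrization of the cone shows the lateral wall has $\H^n$-measure at most $\eta\,\H^{n-1}(\pa E\cap\pa B_r(x))\to 0$, while the inner cap has measure $((r-\eta)/r)^n\,\H^n(\pa B_r(x)\cap C)$ which converges to $\H^n(\pa B_r(x)\cap C)$. In case (i) this limit is $\H^n(\pa B_r(x)\setminus(A\cup(E\cap\pa B_r(x))))$, giving \eqref{cup first br area}; in case (ii) it is $\H^n(E\cap\pa B_r(x)\setminus A)$, giving \eqref{cup second br area}; in both cases it is bounded by $\H^n(\pa B_r(x)\setminus A)$, so summing the three pieces yields \eqref{cup area totale}.

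It remains to check the $\C$-spanning condition. Let $\g\in\C$. If $\g\cap\pa E\setminus\cl(B_r(x))\ne\emptyset$ then the same point lies in $\pa F_\eta$ by \eqref{cup fuori da br chiusa}. Otherwise, since $\pa E$ is $\C$-spanning, Lemma \ref{lemma 10} yields an interval component of $\g\cap\cl(B_r(x))$ whose endpoints $p,q$ lie in distinct components of $\pa B_r(x)\setminus\pa E$. At most one of $p,q$ equals $A$, so the other one lies in some $A_j$ with $j\ge 2$; by construction $A_j\subset T\setminus(\pa E\cap\pa B_r(x))\subset\pa F_\eta$, hence $\g\cap\pa F_\eta\ne\emptyset$. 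The main obstacle in the proof is making sure that the modifications carried out inside $B_r(x)$ simultaneously preserve three rather tight requirements: openness and $\H^n$-rectifiability of the new boundary, the prescribed trace $T$ on $\pa B_r(x)$, and the cup-style area bounds \eqref{cup first br area}--\eqref{cup second br area}. The $\H^{n-1}$-rectifiability hypothesis on $\pa E\cap\pa B_r(x)$ is crucial precisely to guarantee the vanishing of the lateral cone wall in the limit $\eta\to 0^+$; without it, the lateral contribution could spoil the factor $2$ in \eqref{cup area totale}.
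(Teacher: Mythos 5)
Your cone--shell variant of the collar $N_\eta$ is a reasonable substitute in spirit, and your rectifiability and area computations for case (i) are essentially sound, but the proposal has two genuine gaps.

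First, the spanning verification. Lemma \ref{lemma 10} requires $\g\cap\pa E\setminus B_r(x)=\emptyset$ (complement of the \emph{open} ball), whereas your dichotomy only separates out the case $\g\cap\pa E\setminus\cl(B_r(x))\ne\emptyset$. This leaves unhandled the case in which $\g$ meets $\pa E$ only on the sphere $\pa B_r(x)$; there Lemma \ref{lemma 10} gives you nothing, and the only way out is to ensure $\pa E\cap\pa B_r(x)\subset\pa F_\eta$. Your case (i) construction does not ensure this: a piece of $\pa E$ sitting on $\pa B_r(x)$ inside $\cl(A)$, not in $\cl(E\cap\pa B_r(x))$ and not in the closure of any other component (the set $S$ of \eqref{who is S}) is approached by $F_\eta$ neither from outside $\cl(B_r(x))$ nor by your cone--shell (whose closure meets $\pa B_r(x)$ only in $\bigcup_{j\ge2}\cl(A_j)$), so such points are not in $\pa F_\eta$, and a test curve touching $\pa E$ only there escapes $\pa F_\eta$. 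This is precisely the failure mode of the naive competitor \eqref{cup competitor first case lemma} illustrated in Figure \ref{fig cup00}-(b); repairing it requires adding a shrinking collar around $S$ as in \eqref{cup competitor first case difficult lemma}, and the price (a coarea selection of good radii $\eta_k\downarrow0$, \eqref{fix:coarea trick}) is why the lemma's estimates are stated as $\limsup$'s rather than for every $\eta$.

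Second, the case (ii) set is wrong as written. With $F_\eta=(E\setminus\cl(B_r(x)))\cup(B_r(x)\setminus\cl(D_\eta))$ the sphere $\pa B_r(x)$ is entirely excluded from $F_\eta$, so every $y\in A$ is approached by $F_\eta$ from inside $B_r(x)$ yet does not belong to $F_\eta$; hence $A\subset\pa F_\eta$ for every $\eta$, and in fact $\pa F_\eta\cap\pa B_r(x)$ contains essentially all of $\pa B_r(x)$. This breaks \eqref{cup buccia} and adds an extra $\H^n(A)$ to the left side of \eqref{cup area totale}, defeating the purpose of the case $A\subset E$ (which is to open the passage through $A$ so that $A$ lies in the interior of the competitor). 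The fix is to take $F_\eta=(E\cup B_r(x))\setminus\cl(D_\eta)$, the analogue of \eqref{cup competitor second case lemma}; with that change your verification that $A_j\subset\pa F_\eta$ for $j\ge2$ goes through.
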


  \begin{remark} \label{rmk cup}

  Before proceeding with the proof of the lemma, let us first provide some additional details on the construction of the competitors $F=F_\eta$, which, as anticipated, is different depending on whether $A \cap E = \emptyset$ or $A \subset E$. In what follows, given $Y\subset\pa B_r(x)$, we set
\[
N_\eta(Y)=\Big\{y-t\,\nu_{B_r(x)}(y):y\in Y\,,t\in(0,\eta)\Big\}\,,\qquad 0<\eta<r\,.
\]
{\it The case when $A \cap E = \emptyset$:} In this case, we define
\begin{equation} \label{who is Y}
Y=\pa B_r(x)\setminus\big(\cl(E\cap\pa B_r(x))\cup\cl(A)\big)\,,
\end{equation}
and then we further distinguish two scenarios, depending on whether the set
\begin{equation} \label{who is S}
S = \pa E \cap \cl (A) \setminus \left[ \cl\left(E \cap \pa B_r (x) \right) \cup \cl(Y) \right]
\end{equation}
is empty or not. When $S=\emptyset$ the cup competitor defined by $E$ and $A$ is given by
  \begin{equation}
    \label{cup competitor first case lemma}
      F=\big(E\setminus\cl(B_r(x))\big)\cup\,N_\eta(Y)\,,
  \end{equation}
  see
  \begin{figure}
  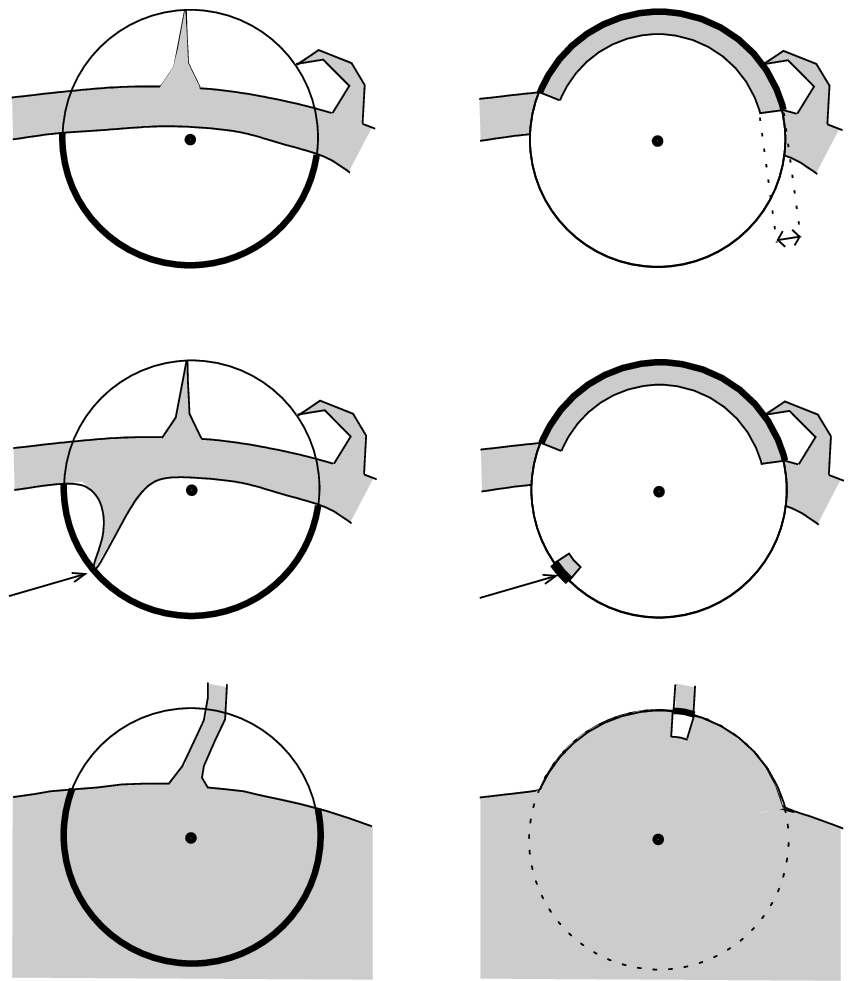\caption{{\small Cup competitors when: (a) $A\cap E=\emptyset$ and $S = \emptyset$; (b) $A \cap E = \emptyset$ and $S \neq \emptyset$; (c) $A\subset E$. Picture (b) really pertains to the case $n\ge 2$, in which the component $A$ in the picture is not necessarily disconnected by the presence of $S$. In the situation of picture (b) the set $F$ defined by \eqref{cup competitor first case lemma} may fail to intersect a test curve $\g$ which was intersecting with $\Om\cap\pa E$ only at points in $S$.}}\label{fig cup00}
  \end{figure}
  Figure \ref{fig cup00}-(a), and step one of the proof. When $S \neq \emptyset$, see Figure \ref{fig cup00}-(b), if we define $F$ as in \eqref{cup competitor first case lemma}, then $\Om \cap \pa F$ may fail to be $\C$-spanning $W$; we thus need to modify \eqref{cup competitor first case lemma}, and to this end, denoting by ${\rm d}_S$ the distance function from $S$ and by $U_\eta (S) = \pa B_r (x) \cap \{{\rm d}_S (y) < \eta\}$, we set
  \begin{equation} \label{cup competitor first case difficult lemma}
  F=\big(E\setminus\cl(B_r(x))\big)\cup\,N_\eta(Z)\,, \qquad Z = Y \cup \left( U_\eta(S) \setminus \cl (E \cap \pa B_r (x)) \right )\,,
  \end{equation}
  see, again, Figure \ref{fig cup00}-(b). This situation, discussed in detail in step two of the proof, is made more delicate since we can prove that the sets defined in \eqref{cup competitor first case difficult lemma} are well-behaved in the limit as $\eta\to 0^+$ only along along a suitable sequence $\eta_k\downarrow 0^+$. For this reason, we will actually define $F_\eta$ as in \eqref{cup competitor first case difficult lemma} only when $\eta=\eta_k$, and then extend the definition by setting $F_\eta=F_{\eta_k}$ for all $\eta \in \left(\eta_{k+1}, \eta_{k}\right)$ (so that, for the sake of homogenity, \eqref{cup area totale} can be stated as an $\eta\to 0^+$-limit in all three cases).

\medskip

\noindent {\it The case when $A \cap E = \emptyset$:} Finally, when $A \subset E$ the cup competitor defined by $E$ and $A$ is given by
  \begin{equation}
    \label{cup competitor second case lemma}
  F=\big(E\cup B_r(x)\big)\setminus\cl\big(N_\eta(Y)\big)\,,\qquad Y=(E\cap\pa B_r(x))\setminus\cl(A)\,,
  \end{equation}
  see Figure \ref{fig cup00}-(c). We treat this case in step three of the proof.
\end{remark}

\begin{proof}
  {\it Step one}: We assume that $A \cap E = \emptyset$ and, after defining $Y$ as in \eqref{who is Y} and $S$ as in \eqref{who is S}, we suppose first that
  \begin{equation} \label{cc1_easy}
   		S = \emptyset\,.
  \end{equation}
    We then define $F$ by \eqref{cup competitor first case lemma}. For the sake of brevity we set $B_r=B_r(x)$. We claim that \eqref{cup fuori da br chiusa} holds, and that we have
  \begin{eqnarray}
    \label{posse 3 1}
    B_r\cap\pa F&=&B_r\cap\pa N_\eta(Y)\,,
    \\
    \label{posse 0 1}
    Y&\subset&\pa F\cap\pa B_r\,,
    \\\label{posse 0 2}
    E\cap\pa B_r&\subset&\pa F\cap\pa B_r\,,
    \\\label{posse 0 3 1}
    \pa B_r\setminus\cl(A)&\subset&\,\pa F\cap\pa B_r\,,
     \\ \label{posseno ammazzarlo}
    \pa E \cap \pa B_r & \subset & \pa F \cap \pa B_r\,,
    \\\label{posse 0 4 1}
    \mbox{$A$, $E\cap\pa B_r$, $Y$ are open and disjoint in $\pa B_r\,$,}\hspace{-4cm}
    \\\label{posse 0 3 2}
    \pa F\cap\pa B_r&\subset&\pa B_r\setminus A\,,
    \\\label{posse 0 4 2}
    \pa B_r\setminus\cl(E)&\subset& A\cup Y\,,
    \\\label{posse 0 4}
    \cl(Y)\setminus Y&\subset&\pa B_r\cap\pa E\,,
    \\\label{posse 0 4 A}
    \cl(A)\setminus A&\subset&\pa B_r\cap\pa E\,,
    \\\label{posse 0 4 E}
    \cl(E\cap\pa B_r)\setminus (E\cap\pa B_r)&=&\pa B_r\cap\pa E\,.
  \end{eqnarray}
  Indeed, \eqref{cup fuori da br chiusa} and \eqref{posse 3 1} follow from $F\cap B_r=N_\eta(Y)\cap B_r$ and $F\setminus\cl(B_r)=E\setminus\cl(B_r)$. To prove \eqref{posse 0 1}: $Y\subset\cl(N_\eta(Y))$ gives $Y\subset\cl(F)$, and $F\cap\pa B_r=\emptyset$ implies  $Y\cap F=\emptyset$. To prove \eqref{posse 0 2}: $E\cap\pa B_r\subset\cl(E\setminus\cl(B_r))$, so that $E\cap\pa B_r\subset\cl(F)$, while $F\cap\pa B_r=\emptyset$ gives $(E\cap\pa B_r)\cap F=\emptyset$. \eqref{posse 0 4 1} is obvious, and \eqref{posse 0 3 1} follows from \eqref{posse 0 1} and \eqref{posse 0 2}. \eqref{posseno ammazzarlo} is then an immediate consequence of \eqref{posse 0 1}, \eqref{posse 0 2}, \eqref{posse 0 3 1}, and the condition in \eqref{cc1_easy}. To prove \eqref{posse 0 3 2}: $A$ is open in $\pa B_r\setminus\pa E$ and $A\cap E=\emptyset$, thus $A\cap\cl(E)=\emptyset$; moreover, $A\cap\cl(Y)=\emptyset$ by \eqref{posse 0 4 1}, hence
  \[
  \pa F\cap\pa B_r\,\,\subset\,\,\cl(F)\cap\pa B_r\,\,\subset\,\,\cl(E)\cup\Big(\cl(N_\eta(Y))\cap\pa B_r\Big)\,\,=\cl(E)\cup\cl(Y)\,,
  \]
  and we deduce \eqref{posse 0 3 2}. To prove \eqref{posse 0 4 2}: if $y\in\pa B_r\setminus\cl(E)$, then $y$ belongs to one of the open connected components of $\pa B_r\setminus\pa E$, so it is either $y\in A$, or $y\in \pa B_r\setminus\cl(A)\subset Y$. To prove \eqref{posse 0 4}: by \eqref{posse 0 4 1} we have $A\cap\cl(Y)=\emptyset$, so that by \eqref{posse 0 4 2}
  \[
  \cl(Y)\setminus Y\,\subset\,\pa B_r\setminus(A\cup Y)\,\subset\,\pa B_r\cap\cl(E)\,,
  \]
  and we conclude by $(E\cap\pa B_r)\cap\cl(Y)=\emptyset$ (again, thanks to \eqref{posse 0 4 1}). Finally, \eqref{posse 0 4 A} and the inclusion ``$\subset$'' in \eqref{posse 0 4 E} are obvious, while the other inclusion in \eqref{posse 0 4 E} follows from \eqref{cc1_easy}. Having proved the claim, we complete the proof. By definition, $F \subset \Omega$ is open. We show that $\Om\cap\pa F$ is $\C$-spanning $W$. Given $\g\in\C$, if $\g\cap\pa E\setminus\cl(B_r)\ne\emptyset$, then $\g\cap\pa F\ne\emptyset$ by \eqref{cup fuori da br chiusa}; if instead $\g\cap\pa E\setminus\cl(B_r)=\emptyset$, then necessarily $\g \cap \pa E \cap \cl (B_r) \neq \emptyset$. Now, if $\g \cap \pa E \cap \pa B_r \neq \emptyset$ then $\g \cap \pa F \neq \emptyset$ by \eqref{posseno ammazzarlo}; otherwise we actually have $\g \cap \pa E \setminus B_r = \emptyset$, and thus, by Lemma \ref{lemma 10}, $\g$ intersects two distinct connect components of $\pa B_r\setminus\pa E$, and at least one of them is contained in $\pa F\cap\pa B_r$: indeed, $\pa F\cap\pa B_r$ contains $\pa B_r\setminus\cl(A)$ by \eqref{posse 0 3 1}, where $\cl(A)$ is disjoint from all the connected components of $\pa B_r\setminus\pa E$ that are different from $A$.

  Now, we prove \eqref{cup buccia}, \eqref{cup area totale}, and \eqref{cup first br area}. First notice that \eqref{posse 0 3 1}, \eqref{posse 0 3 2}, \eqref{posse 0 4 A}, and $\H^n (\pa B_r \cap \pa E)=0$ imply that
  \begin{eqnarray} \label{cup buccia stronger}
  \pa F \cap \pa B_r &=& \pa B_r \setminus A \qquad \mbox{modulo $\H^n$}\,,
  \end{eqnarray}
  which in turn implies \eqref{cup buccia}. Next, we claim that
  \begin{eqnarray}\label{cup area total eta first}
    \H^n(\Om\cap\pa F)&\le&\H^n\big(\Omega \cap \pa E\setminus B_r\big)+\H^n(E\cap\pa B_r)
    \\\nonumber
    &&+\big(2+C(n)\,\eta\big)\,\H^n\Big(\pa B_r\setminus \big(A\cup(E\cap\pa B_r)\big)\Big)
    +C(n)\,\eta\,\H^{n-1}\big(\pa E\cap\pa B_r\big)\,.
  \end{eqnarray}
  To prove the claim, first by $\H^n(\pa E\cap\pa B_r)=0$, \eqref{cup fuori da br chiusa} and \eqref{posse 0 3 2} we have
  \begin{eqnarray}
      \nonumber
      \H^n(\Om\cap\pa F)&=&\H^n(\Omega \cap \pa E\setminus B_r)+\H^n(\cl(B_r)\cap\pa F)
      \\
      \label{posse c1}
      &\le&\H^n(\Omega \cap \pa E\setminus B_r)+\H^n(\pa B_r\setminus A)+\H^n(B_r\cap\pa F)\,.
  \end{eqnarray}
  If $g(y,t)=y-t\,\nu_{B_r}(y)$, then by \eqref{posse 3 1}
  \begin{eqnarray}\nonumber
  B_r\cap\pa F=B_r\cap \pa N_\eta(Y)=g(Y,\eta)\,\cup\, g\Big(\big(\cl(Y)\setminus Y\big)\times[0,\eta]\Big)\,,
  \end{eqnarray}
  so that \eqref{posse 0 4}, the $\H^{n-1}$-rectifiability of $\pa E\cap\pa B_r$, and the area formula give us
  \begin{eqnarray}\label{posse c3}
  \H^n(B_r\cap \pa F)\le (1+C(n)\,\eta)\,\H^n(Y)+C(n)\,\eta\,\H^{n-1}(\pa E\cap\pa B_r)\,.
  \end{eqnarray}
  By $\H^n(\pa E\cap\pa B_r)=0$, \eqref{posse 0 4 A} and \eqref{posse 0 4 E} we have
  \begin{equation}
    \label{posse c4}
      \H^n(Y)=\H^n\big(\pa B_r\setminus \big(A\cup(E\cap\pa B_r)\big)\big)\,,
  \end{equation}
  so that \eqref{posse c1}, \eqref{posse c3} and \eqref{posse c4} imply \eqref{cup area total eta first}. Letting $\eta\to 0^+$ in \eqref{cup area total eta first} we find \eqref{cup area totale}, and doing the same in \eqref{posse c3} and \eqref{posse c4},  we deduce \eqref{cup first br area}.

  \medskip

  \noindent {\it Step two:} In the case $A \cap E = \emptyset$, we now allow for the set $S$ defined in \eqref{who is S} to be non-empty. In this case, if $F$ is defined as in \eqref{cup competitor first case lemma} then the inclusion \eqref{posseno ammazzarlo} is not true in general, and $\Omega \cap \pa F$  may fail to be $\C$-spanning $W$. We then modify the construction as detailed in Remark \ref{rmk cup}, defining $F$ as in \eqref{cup competitor first case difficult lemma}. We notice that $F \subset \Omega$ is open, and that \eqref{cup fuori da br chiusa} holds true, since once again $F \setminus \cl (B_r) = E \setminus\cl (B_r)$. Moreover, we have
 \begin{eqnarray}
 \label{fix:in Br}
 B_r \cap \pa F &=& B_r \cap \pa N_\eta (Z)\,,
\\ \label{fix:Z}
 Z &\subset & \pa F \cap \pa B_r\,,
 \\ \label{fix:E on the sphere}
 E \cap \pa B_r &\subset & \pa F \cap \pa B_r\,,
 \\\label{fix: all except clA}
 \pa B_r \setminus \cl (A) &\subset& \pa F \cap \pa B_r\,,
 \\ \label{fix:key for spanning}
 \pa E \cap \pa B_r &\subset & \pa F \cap \pa B_r\,,
 \\ \label{fix:disjoint comp}
 \mbox{$A$, $E\cap\pa B_r$, $Y$ are open and disjoint in $\pa B_r\,$,}\hspace{-4cm}
 \\ \label{fix:competitor shell}
 \pa F \cap \pa B_r &\subset & \left[\pa B_r \setminus A\right] \cup \left[ \pa B_r \cap \{{\rm d}_S \le \eta\} \right]\,,
 \\ \label{fix:outside of clE}
 \pa B_r \setminus \cl (E) & \subset & A \cup Y\,,
 \\ \label{fix:bdry Y}
 \cl (Y) \setminus Y & \subset & \pa B_r \cap \pa E\,,
 \\ \label{fix:bdry A}
 \cl (A) \setminus A &\subset & \pa B_r \cap \pa E\,,
 \\ \label{fix:cl E on the sphere}
 \cl (E \cap \pa B_r) \setminus (E \cap \pa B_r) & \subset & \pa B_r \cap \pa E\,.
 \end{eqnarray}
The proofs of \eqref{fix:in Br}, \eqref{fix:Z}, \eqref{fix:E on the sphere}, \eqref{fix: all except clA} are identical to the proofs of the corresponding statements in step one with $Z$ replacing $Y$; \eqref{fix:key for spanning} then follows from \eqref{fix:Z}, \eqref{fix:E on the sphere}, and \eqref{fix: all except clA}, since $S \subset U_\eta(S) \setminus \cl(E \cap \pa B_r) \subset Z$; \eqref{fix:disjoint comp} is obvious. To prove \eqref{fix:competitor shell}: as in step one, $A \cap \cl (E) = \emptyset$ and $A \cap \cl (Y) = \emptyset$ by \eqref{fix:disjoint comp}, and
\begin{eqnarray*}
\pa F \cap \pa B_r &&\subset\,\, \cl (F) \cap \pa B_r \,\,\subset \,\,\cl (E) \cup \left( \cl (N_\eta (Z)) \cap \pa B_r \right) 
\\
&&\subset\,\, \cl (E) \cup \cl (Y) \cup \cl (U_\eta(S))\,,
\end{eqnarray*}
so that \eqref{fix:competitor shell} follows from the fact that $\cl (U_\eta(S)) \subset \pa B_r \cap \{{\rm d}_S \leq \eta\}$. Next, we notice that \eqref{fix:outside of clE}, \eqref{fix:bdry Y}, \eqref{fix:bdry A}, and \eqref{fix:cl E on the sphere} are shown analogously to step one (with the identity in \eqref{posse 0 4 E} which becomes an inclusion in \eqref{fix:cl E on the sphere} due to $S$ possibly being not empty). With the above at our disposal, we proceed now to verify the claims of the lemma. First, the proof that $\Omega \cap \pa F$ is $\C$-spanning $W$ follows \emph{verbatim} the argument from step one. Next, \eqref{fix: all except clA}, \eqref{fix:competitor shell}, \eqref{fix:bdry A}, and $\mathcal{H}^n (\pa E \cap \pa B_r)=0$ imply that
\begin{equation}\label{cup buccia eta level}
\mathcal{H}^n\left((\pa F \cap \pa B_r) \, \Delta \, (\pa B_r \setminus A)\right) \leq \mathcal{H}^n (\pa B_r \cap \{{\rm d}_S\leq \eta\})\,.
\end{equation}
In particular, since $\mathcal{H}^{n-1} (S) < \infty$, it holds
\begin{equation} \label{fix:cup competitor shell}
\lim_{\eta \to 0^+} \mathcal{H}^n \left(  (\pa F \cap \pa B_r) \, \Delta \, (\pa B_r \setminus A) \right) = 0\,,
\end{equation}
that is \eqref{cup buccia}. Next, we proceed with estimating $\H^n (\Omega \cap \pa F)$. We first notice that, by \eqref{cup fuori da br chiusa} and $\H^n (\pa E \cap \pa B_r) = 0$
 \begin{eqnarray}
      \nonumber
      \H^n(\Om\cap\pa F)&=&\H^n(\Omega \cap \pa E\setminus B_r)+\H^n(\cl(B_r)\cap\pa F)
      \\
      \label{fix:est1}
      &\le&\H^n(\Om \cap \pa E\setminus B_r)+\H^n(\pa F \cap \pa B_r)+\H^n(B_r\cap\pa F)\,.
  \end{eqnarray}
Setting, as in step one, $g(y,t) = y - t\, \nu_{B_r} (y)$, we then have from \eqref{fix:in Br} that
\begin{equation} \label{fix rect 1}
B_r \cap \pa F = B_r \cap \pa N_\eta (Z) = g (Z,\eta) \cup g\left( \left( \cl (Z) \setminus Z \right) \times \left[0,\eta\right] \right)\,.
\end{equation}
By the area formula, we can easily estimate
\begin{align}
\H^n (g(Z,\eta)) &\leq  (1 + C(n)\,\eta)\, \H^n (Z) \nonumber \\
& \leq   (1 + C(n)\,\eta)\, \Big( \H^n (Y) + \H^n (\pa B_r \cap \{{\rm d}_S < \eta\}) \Big) \nonumber \\ \label{fix:est2}
& \leq (1 + C(n)\,\eta)\, \Big( \H^n (\pa B_r \setminus (A\cup (E \cap \pa B_r))) + \H^n (\pa B_r \cap \{{\rm d}_S < \eta\}) \Big) \,.
\end{align}
On the other hand, it holds
\begin{equation} \label{fix rect 2}
\cl (Z) \setminus Z \subset \left[ \cl (Y) \setminus (Y) \right] \cup [ \cl (\hat U) \setminus \hat U ]\,,
\end{equation}
where $\hat U = U_\eta(S) \setminus \cl (E \cap \pa B_r)$. Since $\cl (\hat U) \subset \cl (U_\eta(S)) \setminus (E \cap \pa B_r)$, \eqref{fix:cl E on the sphere} implies that
\begin{equation} \label{fix rect 3}
\cl(\hat U) \setminus \hat U \subset \left( \pa B_r \cap \{{\rm d}_S = \eta\} \right) \cup \left( \pa B_r \cap \pa E \right)\,,
\end{equation}
and thus \eqref{fix:bdry Y} yields
\begin{equation} \label{fix:est3}
\H^n \left( g ( (\cl(Z) \setminus Z) \times \left[0,\eta\right] ) \right) \leq C(n)\, \eta\, \Big( \H^{n-1} (\pa B_r \cap \pa E) + \H^{n-1} (\pa B_r \cap \{{\rm d}_S = \eta\}) \Big)\,.
\end{equation}
By applying the coarea formula to ${\rm d}_S$, it holds for every $0 < \sigma < r/2$
\begin{equation} \label{fix:coarea}
\int_0^\sigma \H^{n-1} (\pa B_r \cap \{{\rm d}_S = \eta\}) \, d\eta = \H^n (\pa B_r \cap \{{\rm d}_S \leq \sigma \}) < \infty\,,
\end{equation}
and thus there exists a decreasing sequence $\{\eta_k\}_{k=1}^\infty$ with $\lim_{k \to \infty} \eta_k =0$ such that $\pa B_r \cap \{{\rm d}_S = \eta_k\}$ is $\H^{n-1}$-rectifiable and
\begin{equation} \label{fix:coarea trick}
\lim_{k \to \infty} \eta_k \, \H^{n-1} (\pa B_r \cap \{{\rm d}_S =\eta_k\}) = 0\,.
\end{equation}
If $F_k$ is the sequence of cup competitors defined by \eqref{cup competitor first case difficult lemma} in correspondence with the choice $\eta=\eta_k$, we then have from \eqref{fix rect 1}, \eqref{fix rect 2}, \eqref{fix:bdry Y}, and \eqref{fix rect 3} that $\Om \cap \pa F_k$ is $\H^n$-rectifiable, and from \eqref{fix:est1}, \eqref{fix:cup competitor shell}, \eqref{fix:est2}, \eqref{fix:est3}, and \eqref{fix:coarea trick} that
\begin{eqnarray} \label{fix: est final1}
\limsup_{k \to \infty} \H^n (B_r \cap \pa F_k) &\leq& \H^n ( \pa B_r \setminus (A \cup (E \cap \pa B_r)))\,, \\ \label{fix: est final2}
\limsup_{k \to \infty} \H^n (\Omega \cap \pa F_k) &\leq& \H^n (\Omega \cap \pa E \setminus B_r) + 2\, \H^n (\pa B_r \setminus A)\,.
\end{eqnarray}
Defining $F_\eta=F_{\eta_k}$ for all $\eta \in \left( \eta_{k+1}, \eta_{k}\right)$ then allows to conclude both \eqref{cup area totale} and \eqref{cup first br area}.

  \medskip

  \noindent {\it Step three}: We now assume that $A\subset E$, and define $F$ by \eqref{cup competitor second case lemma}, that is
  \begin{equation}
  \label{cup competitor second case lemma repeat}
  F=\big(E\cup B_r\big)\setminus\cl\big(N_\eta(Y)\big)\,,\qquad Y=(E\cap\pa B_r)\setminus\cl(A)\,.
  \end{equation}
  We claim that \eqref{cup fuori da br chiusa} holds, as well as
  \begin{eqnarray}
\label{cc2_Y}
Y & \subset & \pa F \cap \pa B_r\,, \\
\label{cc2_restoftheworld}
\pa B_r \setminus E & \subset & \pa F \cap \pa B_r\,,\\
\label{cc2_keyforspanning}
\pa B_r \setminus \cl(A) & \subset &  \pa F \cap \pa B_r\,,\\
\label{cc2_in}
B_r\cap\pa F & \subset & B_r \cap \pa N_\eta(Y)\,,\\
\label{cc2_partition}
A, \pa B_r \setminus \cl(E), Y \mbox{ are open and disjoint in $\pa B_r$}\,,\hspace{-4cm}\\
\label{cc2_bdry}
\pa F \cap \pa B_r & \subset & \pa B_r \setminus A\,,\\
\label{cc2 AAA}
\cl(A)\setminus A &\subset &\pa B_r\cap\pa E\,,
\\
\label{cc2_Ybdry}
\cl(Y) \setminus Y & \subset & \pa B_r \cap \pa E\,.
 \end{eqnarray}
First, $F \setminus \cl(B_r) = E \setminus \cl(B_r)$ implies \eqref{cup fuori da br chiusa}. To prove \eqref{cc2_Y}: since $E$ is open we have $E \cap \pa B_r\subset\cl(E\setminus\cl(B_r))=\cl(F\setminus\cl(B_r))$ (by \eqref{cup competitor second case lemma repeat}), thus $Y\subset\cl(F)$; we conclude as $Y \cap F = \emptyset$. As $F\cap\pa B_r\subset E\cap\pa B_r$, to prove \eqref{cc2_restoftheworld} we just need to show that $\pa B_r\setminus E\subset\cl(F)$: since $\cl(U)\setminus\cl(V)\subset\cl(U\setminus\cl(V))$ for every $U,V\subset\R^{n+1}$, by $\pa B_r\cap\cl(N_\eta(Y))\subset\cl(E)$,
\begin{eqnarray*}
\pa B_r\setminus\cl(E)&\subset&\cl(B_r)\setminus\cl(N_\eta(Y))\subset\cl\big(B_r\setminus\cl(N_\eta(Y)\big)\subset\cl(F)\,,
\\
(\pa B_r\cap\pa E)\setminus\cl(N_\eta(Y))&\subset&\cl(E)\setminus\cl(N_\eta(Y))\,\,\subset\,\,\cl(E\setminus\cl(N_\eta(Y)))\,\,\subset\,\,\cl(F)\,,
\\
\pa B_r\cap\pa E\cap\cl(N_\eta(Y))&\subset&\pa E\cap\cl(Y)\,\,\subset\,\,\pa F\,,
\end{eqnarray*}
where the last inclusion follows by \eqref{cc2_Y}. Next, \eqref{cc2_keyforspanning} follows by \eqref{cc2_Y}, \eqref{cc2_restoftheworld} and
\[
\pa B_r \setminus \cl(A) = \left[ (E\cap\pa B_r) \setminus \cl(A) \right] \cup \left[ \pa B_r \setminus (E \cup \cl(A)) \right]
\subset Y \cup (\pa B_r \setminus E)\,.
\]
To prove \eqref{cc2_in}: setting $V^c=\R^{n+1}\setminus V$, by $B_r\cap F=B_r\cap \cl(N_\eta(Y))^c$ we find $B_r\cap\pa F= B_r \cap \pa \left[ \cl(N_\eta(Y))^c \right]$, where, as a general fact on open set $U\subset\R^{n+1}$, we have
\[
\pa[\cl(U)^c]=\cl(\cl(U)^c)\setminus\cl(U)^c=\cl(U)\cap\cl(\cl(U)^c)\,,\qquad\cl(\cl(U)^c)\subset U^c\,,
\]
and thus $\pa[\cl(U)^c]\subset\pa U$. Next, \eqref{cc2_partition} is obvious, and implies $A\cap\cl(Y)=\emptyset$ where $\cl(Y)=\cl(N_\eta(Y))\cap\pa B_r$, so that $A \cap \pa B_r \subset E \cap \pa B_r \setminus \cl(N_\eta(Y))= F \cap \pa B_r$, and \eqref{cc2_bdry} follows. To prove \eqref{cc2 AAA}, just notice that $A\subset E$ and $A$ is a connected component of $\pa B_r\setminus\pa E$. To prove \eqref{cc2_Ybdry}: trivially, $\cl(Y)\setminus Y\subset\cl(Y) \subset \pa B_r \cap \cl(E)$, while by definition of $Y$ and by $\cl(Y)\cap A=\emptyset$
\begin{eqnarray*}
E\cap(\cl(Y)\setminus Y)&=&\big(\cl(Y)\cap(E\cap\pa B_r)\big)\setminus Y=\cl(Y)\cap(E\cap\pa B_r)\cap\cl(A)
\\
&=&(E\cap\pa B_r)\cap\cl(Y)\cap\pa A\subset E\cap(\cl(A)\setminus A)=\emptyset\,,
\end{eqnarray*}
thanks to \eqref{cc2 AAA}. We have completed the claim. Next, by \eqref{cc2_keyforspanning}, \eqref{cc2_bdry}, \eqref{cc2 AAA}, and by $\H^n(\pa B_r\cap\pa E)=0$, we deduce \eqref{cup buccia stronger} and thus \eqref{cup buccia}, while $\Om \cap \pa F$ is $\C$-spanning $W$ thanks to \eqref{cup fuori da br chiusa}, Lemma \ref{lemma 10}, \eqref{cc2_keyforspanning}, and \eqref{cc2_restoftheworld}. Finally,
 \begin{eqnarray}\label{cup area total eta second}
    \H^n(\Om\cap\pa F)&\le&\H^n\big(\pa E\setminus B_r\big)+\H^n(\pa B_r\setminus E)
    \\\nonumber
    &&+\big(2+C(n)\,\eta\big)\,\H^n(E\cap\pa B_r\setminus A)
    +C(n)\,\eta\,\H^{n-1}\big(\pa E\cap\pa B_r\big)\,.
  \end{eqnarray}
Indeed, by $\H^n(\pa E \cap \pa B_r) = 0$, \eqref{cup fuori da br chiusa}, and \eqref{cc2_bdry}
\begin{eqnarray}
   \label{cc2_a_ext1}
\H^n(\Om \cap \pa F) & \leq& \H^n(\pa E \setminus B_r) + \H^n(\pa F \cap \cl(B_r))
\\\nonumber
&\leq& \H^n(\pa E \setminus B_r) + \H^n(\pa B_r \setminus A)+ \H^n(B_r \cap \pa F)
\\\nonumber
&\le& \H^n(\pa E \setminus B_r) + \H^n(\pa B_r\setminus E)+\H^n((E\cap\pa B_r)\setminus A)+\H^n(B_r \cap \pa F)\,;
\end{eqnarray}
by \eqref{cc2_in}, \eqref{cc2_Ybdry}, the $\H^{n-1}$-rectifiability of $\pa E \cap \pa B_r$, and the area formula
\begin{eqnarray}\label{cc2_a_ext2}
\H^n(B_r\cap\pa F)&\le&\H^n(B_r \cap \pa N_\eta(Y))
\\\nonumber
&\leq&(1 + C(n) \, \eta) \H^n(Y) + C(n) \, \eta \, \H^{n-1}(\pa E \cap \pa B_r)\,,
\end{eqnarray}
while \eqref{cc2 AAA} and $\H^n(\pa B_r\cap\pa E)=0$ give
\[
\H^n(Y)=\H^n((E\cap\pa B_r)\setminus\cl(A))=\H^n((E\cap\pa B_r)\setminus A)\,.
\]
We thus deduce \eqref{cup area total eta second}. As $\eta\to 0^+$ in \eqref{cup area total eta second} and in \eqref{cc2_a_ext2} we get \eqref{cup area totale} and \eqref{cup second br area}.
\end{proof}

In the following lemma we introduce the notion of exterior cup competitor. We set
\[
M_\eta(Y)=\Big\{y+t\,\nu_B(y):y\in Y\,,t\in(0,\eta)\Big\}\,,\qquad\eta>0\,,
\]
whenever $B$ is an open ball and $Y\subset\pa B$.

\begin{lemma}[Exterior cup competitor]
  \label{lemma cup exterior}
  Let $E\in\E$ be such that $\Om \cap \pa E$ is $\C$-spanning $W$, let $R>0$ be such that $W\cc B_R(0)$ and $\pa E\cap\pa B_R(0)$ is $\H^{n-1}$-rectifiable, and let $A$ be a connected component of $\pa B_R(0)\setminus\pa E$ such that $A\cap E=\emptyset$. For every $\eta \in \left(0,1 \right)$ there exists a set $F=F_\eta \in \E$ such that $\Om\cap\pa F$ is $\C$-spanning $W$ and
  \begin{eqnarray}
  \label{area of external cup competitors}
  \limsup_{\eta\to 0^+}\H^n(\Om\cap\pa F)\le \H^n\big(\Om\cap B_R(0)\cap\pa E)+2\,\H^n(\pa B_R(0)\setminus A)\,.
  \end{eqnarray}
\end{lemma}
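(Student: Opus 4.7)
The construction will mirror that of Lemma~\ref{lemma cup competitor first kind} in the subcase $A\cap E=\emptyset$, but with the role of the inner shell $N_\eta$ replaced by the outer shell $M_\eta$. Set $B_R:=B_R(0)$ and
\[
Y=\pa B_R\setminus\bigl(\cl(A)\cup\cl(E\cap\pa B_R)\bigr)\,,\qquad S=\pa E\cap\cl(A)\setminus\bigl[\cl(E\cap\pa B_R)\cup\cl(Y)\bigr]\,,
\]
and $Z=Y\cup(U_\eta(S)\setminus\cl(E\cap\pa B_R))$, where $U_\eta(S)=\pa B_R\cap\{\dist(\cdot,S)<\eta\}$. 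The \emph{exterior cup competitor} is then
\[
F_\eta=(E\cap B_R)\cup M_\eta(Z)\,.
\]
When $S=\emptyset$ this definition is used for every $\eta$; when $S\neq\emptyset$ the same coarea selection of a sequence $\eta_k\downarrow 0$ as in step two of the proof of Lemma~\ref{lemma cup competitor first kind} is performed, and $F_\eta$ is set equal to $F_{\eta_k}$ for $\eta\in(\eta_{k+1},\eta_k)$.

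That $F_\eta\in\E$ and $F_\eta\subset\Om$ follows at once from $W\cc B_R$, since $E\cap B_R\subset B_R$ and $M_\eta(Z)\subset\R^{n+1}\setminus\cl(B_R)$ are disjoint open sets contained in $\Om$, and $\pa F_\eta$ is the $\H^n$-rectifiable union of $\pa E\cap B_R$, a portion of $\pa B_R$, and the outer cap and lateral wall of $M_\eta(Z)$. The area estimate \eqref{area of external cup competitors} is then proved exactly as in the derivation of \eqref{cup area total eta first} and \eqref{fix: est final2}: $\pa F_\eta\cap\pa B_R\subset\pa B_R\setminus A$ modulo $\H^n$-null sets, and the area formula applied to the map $y\mapsto y+t\,\nu_{B_R}(y)$ together with the $\H^{n-1}$-rectifiability of $\pa E\cap\pa B_R$ and (when $S\neq\emptyset$) \eqref{fix:coarea trick} bounds the lateral contribution by an infinitesimal, giving the required limsup bound by $\H^n(\Om\cap B_R\cap\pa E)+2\,\H^n(\pa B_R\setminus A)$.

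The delicate step is to verify that $\Om\cap\pa F_\eta$ is $\C$-spanning $W$. Given $\g\in\C$ one checks directly that if $\g$ meets $\pa E\cap\cl(B_R)$ (contained in $\pa F_\eta$ because each such point is a limit of points of both $F_\eta$ and its complement) or $\pa B_R\setminus\cl(A)$ (contained in $\pa F_\eta$ by construction), then $\g\cap\pa F_\eta\neq\emptyset$. The sole remaining case is that $\g\cap\pa E\cap\cl(B_R)=\emptyset$ and every crossing of $\g$ with $\pa B_R$ lies in $A$. Exploiting $A\cap E=\emptyset$, which forces $A$ to lie in the closure of a single connected component $U\subset B_R\setminus\cl(E)$, the inner arcs of $\g$ are all contained in $U$ and are retractable rel endpoints into $A$; each outer arc of $\g$, having endpoints in $A$ and lying in $\R^{n+1}\setminus B_R$, can be replaced by a curve supported in $A$ with the same endpoints---this is possible since $\{|x|\geq R\}$ is simply connected when $n\geq 2$, while for $n=1$ one uses that the arc of $\pa B_R\cong\SS^1$ joining the endpoints can be chosen inside $A$. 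After smoothing, the resulting loop $\bar\g$ is homotopic to $\g$ in $\Om$ and is contained in $A$, hence is disjoint from $\pa E$, contradicting that $\pa E$ is $\C$-spanning $W$.

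The principal obstacle is this spanning step. Unlike in Lemma~\ref{lemma cup competitor first kind}, Lemma~\ref{lemma 10} does not apply, because $B_R$ is not compactly contained in $\Om$; the contradiction must instead be extracted through a homotopy argument in the unbounded exterior $\{|x|\geq R\}$, which critically uses the assumption $A\cap E=\emptyset$ to identify the correct connected component of $B_R\setminus\cl(E)$ and requires a separate treatment when $n=1$, since in that case $\R^2\setminus\cl(B_R)$ is not simply connected.
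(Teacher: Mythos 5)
Your construction of $F_\eta=(E\cap B_R)\cup M_\eta(Z)$, with the same $Y$, $S$, $Z$ as in the interior case, is exactly the paper's, and your treatment of the area estimate \eqref{area of external cup competitors} and of the inclusions $\pa E\cap\pa B_R\subset\pa F$, $\pa B_R\setminus\cl(A)\subset\pa F$ matches the paper's (which itself only sketches these by reference to Lemma \ref{lemma cup competitor first kind}). The divergence is in the spanning step, and there your argument has a genuine gap. In the remaining case ($\g\cap\pa E\cap\cl(B_R)=\emptyset$ and all crossings in $A$) you replace \emph{both} the inner and the outer arcs of $\g$ so as to produce a loop $\bar\g\subset A$, and for the inner arcs you assert that they are ``retractable rel endpoints into $A$.'' This is unjustified and false in general: the required homotopy must take place in $\Om$, and neither the component $U$ of $B_R\setminus\cl(E)$ nor $\Om\cap B_R=B_R\setminus W$ need be simply connected (take $W$ a solid torus in $\R^3$ and an inner arc passing through its hole; the concatenation of that arc with any path in $A$ joining the same endpoints links $W$ and is not null-homotopic in $\Om$). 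With that step broken, $\bar\g$ need not be homotopic to $\g$ and no contradiction follows. Connectedness of $U$ gives you \emph{a} path in $A$ between the endpoints, not a homotopy rel endpoints; in Lemmas \ref{lemma 10} and \ref{lemma close by Lipschitz at boundary} this is supplied by the simple connectivity of the ball (resp. half-ball) \emph{inside} $\Om$, which is precisely what is unavailable here because $W\subset B_R$.

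The repair is to not touch the inner arcs at all: in your remaining case they already avoid $\pa E$ (they lie in $\cl(B_R)$ and $\g$ misses $\pa E\cap\cl(B_R)$), so only the exterior arcs need to be rerouted, and for those the ambient region $\R^{n+1}\setminus B_R$ is contained in $\Om$ and is simply connected for $n\ge 2$. This is exactly the paper's route: it invokes an exterior analogue of Lemma \ref{lemma 10} (adapted from step one of Lemma \ref{lemma close by Lipschitz at boundary}), asserting that some component of $\g\setminus B_R$ is an arc whose endpoints lie in distinct components of $\pa B_R\setminus\pa E$, at most one of which can be $A$; the other endpoint then lies in $\pa B_R\setminus\cl(A)\subset\pa F$ (or in $\pa B_R\cap\pa E\subset\pa F$), giving the contradiction without ever entering $B_R$. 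Finally, note that your $n=1$ fix for the outer arcs (``choose the arc of $\pa B_R$ joining the endpoints inside $A$'') does not address the real difficulty there: an exterior arc in $\R^2\setminus B_R$ can wind around $B_R$, so replacing it by a sub-arc of $A$ changes the homotopy class in $\Om$; this case too is handled by the dichotomy argument (or by observing that a nonzero winding would force every compact $\C$-spanning set to be unbounded, contradicting $\ell<\infty$), not by direct substitution.
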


\begin{proof}
The proof consists of a minor modification of step one and step two in the proof of Lemma \ref{lemma cup competitor first kind}. Precisely, the exterior cup competitor defined by $E$ and $A$ is given by
 \begin{equation}
    \label{cup exterior}
      F=\big(E\cap B_R(0)\big)\cup\,M_\eta(Z)\,,
  \end{equation}
where
\begin{eqnarray*}
Z&=&Y \cup \Big( U_\eta(S) \setminus \cl (E \cap \pa B_R(0)) \Big)\,,
\\
Y&=&\pa B_R(0)\setminus\Big(\cl(E\cap\pa B_R(0))\cup\cl(A)\Big)\,,
\\
U_\eta(S)&=&\pa B_R(0) \cap \{{\rm d}_S < \eta\}\,,
\\
S&=&\pa E \cap \cl (A) \setminus \Big( \cl (E \cap \pa B_R(0)) \cup \cl (Y)\Big)\,;
\end{eqnarray*}
   see
  \begin{figure}
  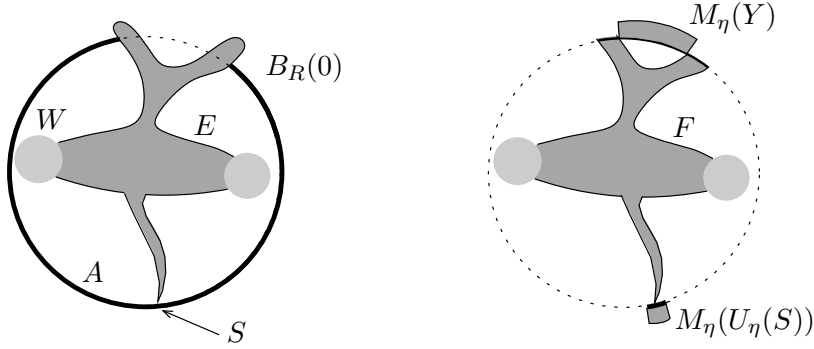\caption{{\small An exterior cup competitor. Notice that for $S$ to be non-emtpy, and non-disconnecting $A$, it must be $n\ge 2$.}}\label{fig cupext}
  \end{figure}
  Figure \ref{fig cupext}. If $\g\in\C$ is such that $\g\cap\pa E\cap \cl(B_R(0))=\emptyset$, then an adaptation of step one in the proof of Lemma \ref{lemma close by Lipschitz at boundary} shows that there exists a connected component of $\g\setminus B_R(0)$ which is diffeomorphic to an interval, and whose end-points belong to distinct connected components of $(\R^{n+1}\setminus B_R(0))\setminus\pa E$. Using this fact, and since $\pa F \cap B_R(0) = \pa E \cap B_R(0)$, we just need to show that $\pa B_R(0)\cap\pa F$ contains $\pa B_R(0) \cap \pa E$ as well as $\pa B_R(0) \setminus \cl (A)$ in order to show that $\Omega \cap \pa F$ is $\C$-spanning $W$. This is done by repeating with minor variations the considerations contained in step two of the proof of Lemma \ref{lemma cup competitor first kind}. The proof of \eqref{area of external cup competitors} is obtained in a similar way, and the details are omitted.
\end{proof}

\subsection{Slab competitors}\label{section slab} Bi-Lipschitz deformations of cup competitors can be used to generate new competitors thanks to Lemma \ref{statement spanning is close by Lipschitz maps}. We will crucially use this remark to replace balls with ``slabs'' (see Figures \ref{fig slab1}, \ref{fig slab2} and \ref{fig slab3}) and obtain sharp area concentration estimates in step five of the proof of Theorem \ref{thm lsc}, as well as in the proof of Theorem \ref{thm basic regularity}, see e.g. \eqref{the important remark citato}. Given $\tau\in(0,1)$, $x\in\R^{n+1}$, $r > 0$, and $\nu\in\SS^n$, we set
\[
S_{\tau,r}^{\,\nu}(x)=\big\{y\in B_r(x):|(y-x)\cdot\nu|<\tau\,r\big\}\,,
\]
and we claim the existence of a bi-Lipschitz map $\Phi:\R^{n+1}\to\R^{n+1}$ with
\[
  \{\Phi\ne\id\}\cc B_{2\,r}(x)\,,\qquad \Phi\big(B_{2\,r}(x)\big)= B_{2\,r}(x)\,,\qquad
  \Phi\big(\pa S_{\tau,t}^{\,\nu}(x)\big)=\pa B_t(x)\quad\forall t\in(0,r)\,,
\]
and such that $\Lip\,\Phi$ and $\Lip\,\Phi^{-1}$ depend only on $n$ and $\tau$.  Indeed, assuming without loss of generality that $x=0$, there is a convex, degree-one positively homogenous function $\vphi:\R^{n+1}\to[0,\infty)$ such that $S_{\tau,t}^\nu(0)=\{\vphi<t\}$ for every $t>0$. Taking $\eta_r:[0,\infty)\to[0,\infty)$ smooth, decreasing and such that $\eta=1$ on $[0,4r/3]$ and $\eta=0$ on $[5r/3,\infty)$, we set
\[
\Phi(x)=\eta_r(|x|)\,\frac{\vphi(x)}{|x|}\,x+(1-\eta_r(|x|))\,x\,.
\]
Noticing that $\Phi$ is a smooth interpolation between linear maps on each half-line $\{t\,x:t\ge0\}$, and observing that the slopes of these linear maps change in a Lipschitz way with respect to the angular variable, one sees that $\Phi$ has the required properties.

\begin{lemma}[Slab competitors]
  \label{lemma slab competitor}
  Let $E\in\E$ be such that $\Om\cap\pa E$ is $\C$-spanning $W$, and let $B_{2r}(x)\cc\Om$, $\nu\in\SS^n$, $\tau\in(0,1)$ with $\pa S_{\tau,r}^{\,\nu}(x)\cap\pa E$ $\H^{n-1}$-rectifiable. Let $A$ be an open connected component of $\pa S_{\tau,r}^{\,\nu}(x)\setminus\pa E$.
  Then for every $\eta\in(0,r/2)$, there exists $F\in\E$ such that $\Om\cap\pa F$ is $\C$-spanning $W$,
  \begin{eqnarray}
  \label{slab competitor exterior}
  &&    F\setminus\cl(S_{\tau,r}^{\,\nu}(x))=E\setminus\cl(S_{\tau,r}^{\,\nu}(x))\,,
      \\
  \label{slab competitor buccia}
&&\lim_{\eta \to 0^+} \H^n \left( (\pa F\cap\pa S_{\tau,r}^{\,\nu}(x)) \, \Delta \, (  \pa S_{\tau,r}^{\,\nu}(x)\setminus A   )   \right)  = 0\,,
  \end{eqnarray}
  and such that if $A\cap E=\emptyset$, then
  \begin{eqnarray}
  \label{area of slab competitors first 0}
  \limsup_{\eta\to 0^+}\H^n(S_{\tau,r}^{\,\nu}(x)\cap\pa F)
  \le\,C(n,\tau)\,\H^n\big(\pa S_{\tau,r}^{\,\nu}(x)\setminus (A\cup E)\big)\,;
  \end{eqnarray}
  while, if $A\subset E$, then
  \begin{eqnarray} \label{area of slab competitors second 0}
  \limsup_{\eta\to 0^+}\H^n(S_{\tau,r}^{\,\nu}(x)\cap\pa F)
  \le\,C(n,\tau)\,\H^n\big(E\cap\pa S_{\tau,r}^{\,\nu}(x)\setminus A\big)\,.
  \end{eqnarray}
\end{lemma}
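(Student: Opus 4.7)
The plan is to reduce the slab construction to the cup construction already completed in Lemma \ref{lemma cup competitor first kind}, using the bi-Lipschitz map $\Phi : \R^{n+1} \to \R^{n+1}$ introduced just before the statement of the lemma. Recall that $\Phi$ satisfies $\{\Phi \ne \id\} \cc B_{2r}(x)$, $\Phi(B_{2r}(x)) = B_{2r}(x)$, and $\Phi(\pa S_{\tau,t}^{\,\nu}(x)) = \pa B_t(x)$ for every $t \in (0,r)$, with $\Lip\,\Phi$ and $\Lip\,\Phi^{-1}$ depending only on $n$ and $\tau$. Since $\Phi$ is a homeomorphism sweeping each sphere $\pa S_{\tau,t}^{\,\nu}(x)$ onto $\pa B_t(x)$ as $t$ varies, it maps $S_{\tau,r}^{\,\nu}(x)$ onto $B_r(x)$.

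First, I would set $\tilde E = \Phi(E)$, which is open with $\H^n$-rectifiable boundary $\pa \tilde E = \Phi(\pa E)$, thus in $\E$. By Lemma \ref{statement spanning is close by Lipschitz maps}, $\Om \cap \pa \tilde E$ is $\C$-spanning $W$. The image $\tilde A = \Phi(A)$ is a connected component of $\pa B_r(x) \setminus \pa \tilde E$, and $\pa \tilde E \cap \pa B_r(x) = \Phi(\pa E \cap \pa S_{\tau,r}^{\,\nu}(x))$ is $\H^{n-1}$-rectifiable by the Lipschitz image property; moreover $\tilde A \cap \tilde E = \emptyset$ when $A \cap E = \emptyset$, and $\tilde A \subset \tilde E$ when $A \subset E$. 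I would then apply Lemma \ref{lemma cup competitor first kind} to $\tilde E$, the ball $B_r(x)$, and the component $\tilde A$, at an auxiliary parameter $\eta'$ to be chosen in terms of $\eta$, obtaining a cup competitor $\tilde F = \tilde F_{\eta'} \in \E$ such that $\Om \cap \pa \tilde F$ is $\C$-spanning $W$, $\tilde F$ agrees with $\tilde E$ outside $\cl(B_r(x))$, and the conclusions \eqref{cup buccia} together with either \eqref{cup first br area} or \eqref{cup second br area} hold.

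Finally, I would define the slab competitor as $F = F_\eta = \Phi^{-1}(\tilde F_{\eta'})$. Property \eqref{slab competitor exterior} is then immediate from $\Phi = \id$ outside $B_{2r}(x)$ and $\Phi(S_{\tau,r}^{\,\nu}(x)) = B_r(x)$; that $\Om \cap \pa F$ is $\C$-spanning $W$ follows from Lemma \ref{statement spanning is close by Lipschitz maps} applied to $\Phi^{-1}$; and the boundary convergence \eqref{slab competitor buccia} together with the area bounds \eqref{area of slab competitors first 0} and \eqref{area of slab competitors second 0} would follow by pulling back \eqref{cup buccia}, \eqref{cup first br area}, and \eqref{cup second br area} via the area formula, using the pointwise bound $J^M \Phi^{-1} \le (\Lip\,\Phi^{-1})^n$ on each relevant piece. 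A parallel bi-Lipschitz comparison of surface areas between $\pa B_r(x)$ and $\pa S_{\tau,r}^{\,\nu}(x)$ converts the resulting right-hand sides into the slab quantities $\H^n(\pa S_{\tau,r}^{\,\nu}(x) \setminus (A \cup E))$ or $\H^n(E \cap \pa S_{\tau,r}^{\,\nu}(x) \setminus A)$, absorbing all Lipschitz constants into a single $C(n,\tau)$.

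The main obstacle, though not conceptually deep, is the careful bookkeeping. First, I must verify that the choice of $\eta'$ in terms of $\eta$ (say $\eta' = \eta/C_0(n,\tau)$ with $C_0 \ge \Lip\,\Phi^{-1}$) makes the exceptional subsequence convention from step two of the proof of Lemma \ref{lemma cup competitor first kind} -- needed when the set $S$ of \eqref{who is S} associated with $(\tilde E, \tilde A)$ is non-empty -- transfer cleanly to the slab setting, by defining $F_\eta = \Phi^{-1}(\tilde F_{\eta'_k})$ for $\eta$ in the appropriate intervals. Second, I must confirm that the $\H^{n-1}$-rectifiability hypothesis on $\pa E \cap \pa S_{\tau,r}^{\,\nu}(x)$ is exactly what is needed to apply Lemma \ref{lemma cup competitor first kind} to $(\tilde E, \tilde A)$, which is the case precisely because $\Phi$ is bi-Lipschitz.
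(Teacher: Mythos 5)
Your proposal is correct and follows essentially the same route as the paper: conjugate by the bi-Lipschitz map $\Phi$, apply Lemma \ref{lemma cup competitor first kind} to $\Phi(E)$ and $\Phi(A)$ in $B_r(x)$, set $F=\Phi^{-1}$ of the resulting cup competitor, and transfer the spanning property via Lemma \ref{statement spanning is close by Lipschitz maps} and the area estimates via the area formula with constants absorbed into $C(n,\tau)$. The auxiliary reparametrization $\eta'=\eta/C_0(n,\tau)$ is harmless but unnecessary, since all conclusions are $\limsup$'s as $\eta\to0^+$ and the paper simply uses the same $\eta$.
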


\begin{proof}
  Let us set for brevity $S_r=S_{\tau,r}^{\,\nu}(x)$ and $B_r=B_r(x)$. By Lemma \ref{statement spanning is close by Lipschitz maps}, $\Phi(E)\in\E$ and $\Om\cap\pa\Phi(E)$ is $\C$-spanning $W$. Since $\Phi$ is an homeomorphism between $\pa S_r$ and $\pa B_r$, $\Phi(A)$ is an open connected component of $\pa B_r\setminus \pa \Phi(E)$. Depending on whether $A\cap E=\emptyset$ or $A\subset E$, and thus, respectively, depending on whether $\Phi(A)\cap\Phi(E)=\emptyset$ or $\Phi(A)\cap\Phi(E)\ne\emptyset$, we consider the cup competitor $G$ defined by $\Phi(E)$ and $\Phi(A)$, so that
  \[
  G=\big(\Phi(E)\setminus\cl(B_r)\big)\cup\,N_\eta(Z)\,,\qquad Z = Y \cup \Big( U_\eta(S) \setminus \cl (\Phi (E) \cap \pa B_r) \Big)\,,
  \]
  where
  \[
 Y=\pa B_r\setminus\big(\cl(\Phi(E)\cap\pa B_r)\cup\cl(\Phi(A))\big)\,, \qquad U_\eta(S) = \pa B_r \cap \{{\rm d}_S < \eta\}\,,
  \]
  with
  \[
  S = \pa \Phi(E) \cap \cl (\Phi (A)) \setminus \left[ \cl (\Phi (E) \cap \pa B_r) \cup \cl (Y) \right],
  \]
  if $A\cap E=\emptyset$, see \eqref{cup competitor first case difficult lemma}, and
  \[
  G=\big(\Phi(E)\cup B_r\big)\setminus\cl\big(N_\eta(Y)\big)\,,\qquad Y=\big(\Phi(E)\cap\pa B_r\big)\setminus\cl(\Phi(A))\,,
  \]
  if $A\subset E$, see \eqref{cup competitor second case lemma}. Finally, we set $F=\Phi^{-1}(G)$. Since $G\in\E$ and $\Om\cap\pa G$ is $\C$-spanning $W$, by Lemma \ref{statement spanning is close by Lipschitz maps} we find that $F\in\E$ and that $\Om\cap\pa F$ is $\C$-spanning $W$. By construction $G\setminus\cl(B_r)=\Phi(E)\setminus\cl(B_r)$, so that \eqref{slab competitor exterior} follows by
  \[
  F\setminus\cl(S_r)=\Phi^{-1}\big(G\setminus\cl(B_r)\big)=\Phi^{-1}\big(\Phi(E)\setminus\cl(B_r)\big)=E\setminus\cl(S_r)\,.
  \]
  By \eqref{cup buccia}, $\H^n \left( (\pa B_r\cap\pa G) \, \Delta \, (\pa B_r\setminus\Phi(A)) \right) \to 0$ as $\eta \to 0^+$, which gives \eqref{slab competitor buccia} by the area formula. Finally,  \eqref{area of slab competitors first 0} and \eqref{area of slab competitors second 0} are deduced by the area formula, \eqref{cup first br area} and \eqref{cup second br area}.
\end{proof}

\subsection{Cone competitors}\label{section cone}
  As customary in the analysis of area minimization problems, we want to compare $\H^n(B_r(x)\cap\pa E)$ with $\H^n(B_r(x)\cap\pa F)$, where $F$ is the cone spanned by $E\cap\pa B_r(x)$ over $x$,
  \begin{equation}
    \label{cone competitor}
      F=\big(E\setminus\cl(B_r(x))\big)\cup \big\{(1-t)\,x+t\,y:y\in E\cap \pa B_r(x)\,,t\in(0,1]\big\}\,.
  \end{equation}
  Following the terminology of \cite{DLGM}, given $K\in\Sc$, the cone competitor $K'$ of $K$ in $B_r(x)$ is similarly defined as
  \[
  K'=(K\setminus B_r(x) )\cup \big\{(1-t)\,x+t\,y:y\in K\cap\pa B_r(x)\,,t\in[0,1]\big\}\,,
  \]
  and is indeed $\C$-spanning $W$ (since $K$ was). However, for some values of $r$, $\pa F\cap B_r(x)$ may be strictly smaller than the cone competitor $K'$ defined by the choice $K=\Om\cap\pa E$ in $B_r(x)$, and thus it may fail to be $\C$-spanning; see
  \begin{figure}
     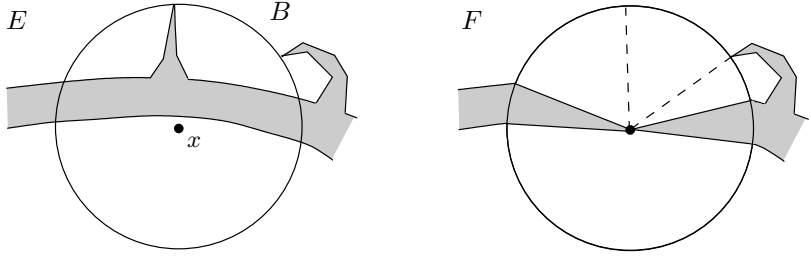\caption{{\small In this picture, the cone competitor $F$ defined by $E\cap\pa B_r$ as in \eqref{cone competitor} may fail to be $\C$-spanning $W$. Notice that the dashed lines are part of the cone competitor $K'$ defined by $K=\Om\cap\pa E$ in $B_r(x)$, which is indeed strictly larger than $\Om\cap\pa F$.
     }}\label{fig cone}
  \end{figure}
  Figure \ref{fig cone}. By Sard's lemma, if $E$ has smooth boundary in $\Om$ this issue can be avoided as, for a.e. $r$, $\pa E$ and $\pa B_r$ intersect transversally, and thus $\pa E\cap\pa B_r(x)$ is the boundary of $E\cap\pa B_r(x)$ relative to $\pa B_r(x)$; but working with smooth boundary leads to other difficulties when constructing cup competitors. We thus approximate $F$ (as defined in \eqref{cone competitor}) in energy by means of diffeomorphic images of $E$.

\begin{lemma}[Cone competitors]
  \label{lemma cone competitor}
  Let $E\in\E$ be such that $\Om\cap\pa E$ is $\C$-spanning $W$, and let $B=B_r(x)\cc\Om$ be such that $E \cap \pa B_r(x)$ is $\H^n$-rectifiable, $\pa E\cap\pa B_r(x)$ is $\H^{n-1}$-rectifiable and $r$ is a Lebesgue point of the maps $t \mapsto \H^{n}(E \cap \pa B_t(x))$ and $t \mapsto \H^{n-1}(\pa E \cap \pa B_t(x))$. Then for each $\eta\in(0,r/2)$ there exists $F\in\E$ such that $F \Delta E \subset B_r(x)$, $\Om\cap\pa F$ is $\C$-spanning $W$, and
  \begin{align}
    \label{cone competitor area inequality}
    \limsup_{\eta\to 0^+}\H^n(\Om\cap\pa F)&\le\H^n(\pa E\setminus B_r(x))+\frac{r}n\,\H^{n-1}(\pa E\cap\pa B_r(x))\,,\\
    \label{cone competitor volume inequality}
    \liminf_{\eta \to 0^+} |F| & \geq |E \setminus B_r(x)| + \frac{r}{n+1} \, \H^n(E \cap \pa B_r(x))    \,.
  \end{align}
\end{lemma}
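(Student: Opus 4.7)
The plan is to approximate in energy the ideal cone competitor
\[
F^*=\big(E\setminus\cl(B_r(x))\big)\cup\big\{(1-t)\,x+t\,y:y\in E\cap\pa B_r(x),\ t\in(0,1]\big\}
\]
from \eqref{cone competitor}, which as noted in Figure \ref{fig cone} need not belong to the competition class, by bi-Lipschitz images $F_\eta=\Phi_\eta(E)$ of $E$. First choose a smooth strictly increasing bijection $\phi_\eta:[0,r]\to[0,r]$ equal to the identity in a neighborhood of $r$, with $\phi_\eta(r-\eta)=\mu_\eta$ for some $\mu_\eta\downarrow 0^+$ as $\eta\downarrow 0^+$, and linear on $[0,r-\eta]$. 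Setting $\Phi_\eta(y)=x+\phi_\eta(|y-x|)(y-x)/|y-x|$ for $y\in B_r(x)\setminus\{x\}$, $\Phi_\eta(x)=x$, and extending by the identity outside $B_r(x)$ produces a bi-Lipschitz homeomorphism of $\R^{n+1}$ onto itself with $\{\Phi_\eta\ne\id\}\cc B_r(x)$ and $\Phi_\eta(B_r(x))=B_r(x)$, which compresses $B_{r-\eta}(x)$ radially into $B_{\mu_\eta}(x)$ and stretches the thin shell $B_r(x)\setminus B_{r-\eta}(x)$ so that its image covers $B_r(x)\setminus B_{\mu_\eta}(x)$; consequently $F_\eta$ converges pointwise to $F^*$ as $\eta\to 0^+$.

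Since $\Phi_\eta$ is bi-Lipschitz, $F_\eta\in\E$ and $F_\eta\,\Delta\,E\subset B_r(x)$; moreover $\Om\cap\pa F_\eta=\Phi_\eta(\Om\cap\pa E)$ is $\C$-spanning $W$ by Lemma \ref{statement spanning is close by Lipschitz maps}. By the area formula,
\[
\H^n(\Om\cap\pa F_\eta)=\H^n(\pa E\setminus B_r(x))+\int_{\pa E\cap B_r(x)}J^{\pa E}\Phi_\eta\,d\H^n\,.
\]
On $\pa E\cap B_{r-\eta}(x)$ the Lipschitz constant of $\Phi_\eta$ is $\mu_\eta/(r-\eta)$, so the contribution there is bounded by $C_n(\mu_\eta/r)^n\,\H^n(\pa E\cap B_r(x))\to 0$. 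On the stretched shell $\pa E\cap(B_r(x)\setminus B_{r-\eta}(x))$, factoring $J^{\pa E}\Phi_\eta$ into its radial eigenvalue $\phi_\eta'(\rho)$ and angular eigenvalues $\phi_\eta(\rho)/\rho$ and applying the coarea formula to $y\mapsto|y-x|$ along $\pa E$, together with the Lebesgue-point hypothesis on $t\mapsto\H^{n-1}(\pa E\cap\pa B_t(x))$, recovers the sharp cone coefficient $(r/n)\,\H^{n-1}(\pa E\cap\pa B_r(x))$ in the limit, which gives \eqref{cone competitor area inequality}. The volume inequality \eqref{cone competitor volume inequality} follows similarly: writing $E_\rho=\{\omega\in\SS^n:x+\rho\omega\in E\}$, the substitution $\sigma=\phi_\eta(\rho)$ in spherical coordinates yields
\[
|F_\eta\cap B_r(x)|=\int_0^r\sigma^n\,\H^n\big(E_{\phi_\eta^{-1}(\sigma)}\big)\,d\sigma\,,
\]
and since $\phi_\eta^{-1}(\sigma)\in(r-\eta,r)$ for $\sigma\in(\mu_\eta,r)$, the Lebesgue-point hypothesis on $t\mapsto\H^n(E\cap\pa B_t(x))=t^n\,\H^n(E_t)$ delivers the limit $\frac{r}{n+1}\,\H^n(E\cap\pa B_r(x))$.

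The main obstacle is the area estimate on the stretched shell, where $\phi_\eta'(\rho)\approx r/\eta\to\infty$ and the naive bound $J^{\pa E}\Phi_\eta\le\|D\Phi_\eta\|^n$ is of no help for producing the sharp cone coefficient $r/n$. The resolution is to exploit the anisotropic factorization of $J^{\pa E}\Phi_\eta$ in terms of the angle between the outer unit normal $\nu_E$ and the radial direction $\omega$: the singular values of $D\Phi_\eta|_{T_y\pa E}$ are $\phi_\eta(\rho)/\rho$ with multiplicity $n-1$ together with a further eigenvalue that interpolates between $\phi_\eta'(\rho)$ and $\phi_\eta(\rho)/\rho$ according to $\omega\cdot\nu_E(y)$. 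After the coarea substitution $s=\phi_\eta(t)$, the shell integral becomes asymptotic, as $\eta\to 0^+$, to $\int_0^r(s/r)^{n-1}\,\H^{n-1}(\pa E\cap\pa B_r(x))\,ds$ by the Lebesgue-point hypothesis, which equals the cone area $(r/n)\,\H^{n-1}(\pa E\cap\pa B_r(x))$.
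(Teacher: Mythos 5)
Your proposal follows essentially the same route as the paper's proof: a radial Lipschitz contraction $\Phi_\eta$ collapsing $B_{r-\eta}(x)$ into a tiny ball and stretching the shell $B_r(x)\setminus B_{r-\eta}(x)$ to cover it, with the tangential Jacobian estimated via the anisotropic factorization into one near-radial factor $\phi_\eta'$ and $n-1$ angular factors $\phi_\eta(\rho)/\rho$, the coarea formula in the radial variable, and the Lebesgue-point hypotheses on $t\mapsto\H^{n-1}(\pa E\cap\pa B_t(x))$ and $t\mapsto\H^n(E\cap\pa B_t(x))$ to recover the sharp constants $r/n$ and $r/(n+1)$. The only differences are cosmetic (a smoothed $\phi_\eta$ versus the paper's piecewise-linear $u_\eta$, and a change of variables $s=\phi_\eta(t)$ in place of a direct estimate of $\int u_\eta'(u_\eta/t)^{n-1}\,dt$); the paper additionally records the exact pointwise bound $J^{\pa E}f_\eta\le 1+\sqrt{1-(\nu_E\cdot\hat x)^2}\,u_\eta'(u_\eta/|x|)^{n-1}$, which your ``interpolating singular value'' description summarizes correctly at a heuristic level.
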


\begin{proof}
  Let $x=0$, $r=1$, $B_r=B_r(0)$, and define a bi-Lipschitz map $f_\eta$ by $f_\eta(0)=0$ and $f_\eta(x)=u_\eta(|x|)\,\hat{x}$ if $x\ne 0$, where $\hat x = x / |x|$ and $u_\eta:\R\to[0,\infty)$ is given by
\begin{equation} \label{u_eta}
u_\eta(t) :=
\begin{cases}
\max\{0,\eta\,t\}\,, &\mbox{for $t \leq 1-\eta$}\,,
\\
\eta(1-\eta)+\frac{t-(1-\eta)}{\eta}\,\left(1-\eta(1-\eta)\right)\,, &\mbox{for $t \in [ 1-\eta,1 ]$}\,,
\\
t\,, & \mbox{for $t \geq 1$}\,,
\end{cases}
 \end{equation}
 so that $u_\eta(t)\le t$ for $t\ge0$. Clearly, $\{f_\eta\ne\id\} \subset B_1$ and $f_\eta(B_1)\subset B_1$. The open set $F=f_\eta(E)$ is such that $\Om\cap\pa F=f_\eta(\Om\cap\pa E)$, so that $\Om\cap\pa F$ is $\H^n$-rectifiable and, by Lemma \ref{statement spanning is close by Lipschitz maps},  $\C$-spanning $W$. Thanks to the area formula, \eqref{cone competitor area inequality} will follow by showing
  \begin{equation} \label{wanted estimate}
\limsup_{\eta\to 0^+}\int_{B_1\cap\pa E}J^{\pa E}f_\eta \,d\H^n\le\frac{1}{n}\,\H^{n-1}(\pa E\cap\pa B_1)\,.
  \end{equation}
Trivially, the integral over $B_{1-\eta}\cap\pa E$ is bounded by $C(n)\,\eta^n\,\H^n(\Om\cap\pa E)$. The integral over $B_1\setminus B_{1-\eta}$ is treated as in \cite[Step two, Theorem 7]{DLGM}; by the coarea formula,
\begin{equation} \label{ciccia}
\begin{split}
 \int_{(B_1 \setminus  B_{1-\eta} )\cap\pa E}J^{\pa E}f_\eta \,d\H^n = & \int_{1-\eta}^1 dt \int_{\pa B_t \cap \pa E \cap \{|\nu_E \cdot \hat x| < 1\}} \frac{J^{\pa E} f_\eta}{\sqrt{1-(\nu_E \cdot \hat x)^2}} \, d\H^{n-1} \\ &+ \int_{(B_1 \setminus B_{1-\eta}) \cap \pa E \cap \{|\nu_E \cdot \hat x| =1\}  } J^{\pa E}f_\eta \, d\H^n\,,
 \end{split}
\end{equation}
where $\nu_E(x) \in T_x(\pa E) \cap \mathbb{S}^n$ at $\H^n$-a.e. $x \in \pa E$. By
\begin{equation} \label{derivative}
\nabla f_\eta(x) = \frac{u_\eta(|x|)}{|x|} \, {\rm Id} + \left( u_\eta'(|x|) - \frac{u_\eta(|x|)}{|x|} \right) \, \hat x \otimes \hat x\,,
\end{equation}
if $|\nu_E(x) \cdot \hat x| = 1$, then $J^{\pa E}f_\eta =(u_\eta(|x|)/|x|)^n \leq   1$. Since
\begin{equation} \label{annulus shrinks}
\lim_{\eta \to 0^+} \H^n(\pa E \cap (B_1 \setminus B_{1-\eta})) = 0\,,
\end{equation}
the second term on the right-hand side of \eqref{ciccia} converges to $0$ as $\eta\to 0^+$. As for the first term, by \eqref{derivative}, we have, as explained later on,
\begin{equation}
  \label{all}
J^{\pa E}f_\eta(x) \leq 1 + \sqrt{1-(\nu_E(x) \cdot \hat x)^2} \, u_\eta'(|x|) \, \Big( \frac{u_\eta(|x|)}{|x|} \Big)^{n-1} \quad \mbox{for $\H^n$-a.e. $x \in \pa E$}\,.
\end{equation}
The term corresponding to $1$ in \eqref{all} converges to $0$ as $\eta\to 0^+$ by \eqref{annulus shrinks}. At the same time,
\[
\limsup_{\eta\to 0^+}\Big|\int_{1-\eta}^{1} \Big(\H^{n-1}(\pa E \cap \pa B_t)-\H^{n-1}(\pa E \cap \pa B_1)\Big) \, u_\eta' \, \Big( \frac{u_\eta}{t} \Big)^{n-1} \, dt\Big|=0
\]
since $t=1$ is a Lebesgue point of $t\mapsto\H^{n-1}(\pa B_t\cap\pa E)$, and since $u'_\eta(t)\le 1/\eta$ and $(u_\eta(t)/t)\leq 1$ for $t\ge0$. Finally,
\begin{eqnarray*}
  \int_{1-\eta}^{1} u_\eta' \, \Big( \frac{u_\eta}{t} \Big)^{n-1} \, dt\le\frac1{(1-\eta)^{n-1}}\,\frac{u_\eta(1)^n-u_\eta(1-\eta)^n}n=\frac1{(1-\eta)^{n-1}}\,\frac{1-\eta^n(1-\eta)^n}n\to \frac1n
\end{eqnarray*}
as $\eta\to 0^+$, thus completing the proof of \eqref{cone competitor area inequality}. The proof of \eqref{cone competitor volume inequality} follows an analogous argument. The goal is to show that
\begin{equation} \label{volume wanted estimate}
\liminf_{\eta \to 0^+} \int_{E \cap B_1} Jf_\eta \, dx \geq \frac{1}{n+1} \, \H^n(E \cap \pa B_1)\,,
\end{equation}
and by the coarea formula and \eqref{derivative} it is immediate to see that
\[
\int_{E \cap B_1} Jf_\eta \, dx \geq \int_{1-\eta}^1 u_\eta'(t) \, \left( \frac{u_\eta(t)}{t} \right)^n \, \H^n(E \cap \pa B_t) \, dt\,.
\]
The estimate in \eqref{volume wanted estimate} then readily follows using that $t=1$ is a Lebesgue point for the map $t \mapsto \H^n(E \cap \pa B_t)$, together with
\[
\int_{1-\eta}^1 u_\eta'(t) \, \left( \frac{u_\eta(t)}{t} \right)^n \, dt \geq \frac{1 -\eta^{n+1}(1-\eta)^{n+1} }{n+1} \to \frac{1}{n+1} \qquad \mbox{as $\eta \to 0^+$}\,.
\]
We finally explain how to deduce \eqref{all} from \eqref{derivative}. For $x\in\pa^*E$, let $\{\tau_i\}_{i=1}^n$ be an orthonormal basis of $T_x\pa^*E$ such that $\{\tau_i\}_{i=1}^{n-1}\subset x^\perp$. In this way, we can take
\[
\tau_n=\frac{\hat x-(\hat{x}\cdot\nu_E(x))\nu_E(x)}{\sqrt{1-(\hat{x}\cdot\nu_E(x))^2}}\,,
\]
and therefore compute by \eqref{derivative} that
\begin{eqnarray*}
\nabla^{\pa E}f_\eta(x)[\tau_i]&=& \frac{u_\eta(|x|)}{|x|}\,\tau_i\,,\qquad\forall i=1,...,n-1\,,
\\
\nabla^{\pa E}f_\eta(x)[\tau_n]&=&u_\eta'(|x|)\,\sqrt{1-(\hat{x}\cdot\nu_E)^2}\,\hat{x}-\frac{u_\eta(|x|)}{|x|}\,(\hat{x}\cdot\nu_E)\,
\frac{\nu_E-(\hat{x}\cdot\nu_E)\hat{x}}{\sqrt{1-(\hat{x}\cdot\nu_E)^2}}\,,
\end{eqnarray*}
where we have set for brevity $\nu_E$ in place of $\nu_E(x)$. Therefore
\begin{eqnarray*}
  J^{\pa E}f(x)^2&=&\Big|\bigwedge_{i=1}^n\nabla^{\pa E}f_\eta(x)[\tau_i]\Big|^2
  \\
  &=&\Big(\frac{u_\eta(|x|)}{|x|}\Big)^{2n}\,(\hat{x}\cdot\nu_E)^2\,\Big|\tau_1\wedge\cdots\wedge\tau_{n-1}\wedge\Big(\frac{\nu_E-(\hat{x}\cdot\nu_E)\hat{x}}{\sqrt{1-(\hat{x}\cdot\nu_E)^2}}\Big)\Big|^2
  \\
  &&+\Big(\frac{u_\eta(|x|)}{|x|}\Big)^{2(n-1)}\,u_\eta'(|x|)^2\,\big(1-(\hat{x}\cdot\nu_E)^2\big)\,
  \Big|\tau_1\wedge\cdots\wedge\tau_{n-1}\wedge\hat{x}\Big|^2
  \\
  &\le&1+\Big(\frac{u_\eta(|x|)}{|x|}\Big)^{2(n-1)}\,u_\eta'(|x|)^2\,\big(1-(\hat{x}\cdot\nu_E)^2\big)\,,
\end{eqnarray*}
from which \eqref{all} follows thanks to $\sqrt{1+a}\le1+\sqrt{a}$ for $a\ge 0$.
 \end{proof}

\subsection{Nucleation lemma}\label{section nucleation} The following nucleation lemma can be found, with slightly different statements, in \cite[VI(13)]{Almgren76} or in \cite[Lemma 29.10]{maggiBOOK}.

\begin{lemma}
   \label{statement nucleation} Let $\xi(n)$ be the constant of Besicovitch's covering theorem in $\R^{n+1}$. If $T$ is closed, $A=\R^{n+1}\setminus T$, $0<|E|<\infty$, $P(E;A)<\infty$, $\tau>0$, and
  \[
  \s=\min\Big\{\frac{|E\setminus I_\tau(T)|}{\tau\,P(E;A)},\frac{\xi(n)}{n+1}\Big\}>0
  \]
  then there exists $x\in E^{(1)}\setminus I_\tau(T)$ such that
  \[
  |E\cap B_\tau(x)|\ge\Big(\frac{\s}{2\xi(n)}\Big)^{n+1}\,\tau^{n+1}\,.
  \]
\end{lemma}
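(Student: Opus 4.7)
The plan is a standard Besicovitch-covering plus relative-isoperimetric argument. I would argue by contradiction, assuming that $|E\cap B_\tau(x)|<c^{n+1}\tau^{n+1}$ for every $x\in E^{(1)}\setminus I_\tau(T)$, where $c:=\sigma/(2\xi(n))$. Since $\sigma\le \xi(n)/(n+1)$, this forces $c\le 1/(2(n+1))$, and in particular $|E\cap B_\tau(x)|$ is a tiny fraction of $|B_\tau(x)|$; this smallness will guarantee that the ``small side'' of $E$ in each such ball is $E\cap B_\tau(x)$ itself, so that the relative isoperimetric inequality can be applied without worrying about the complementary bound.

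Next, I would apply Besicovitch's covering theorem to the family $\{B_\tau(x):x\in E^{(1)}\setminus I_\tau(T)\}$ to extract a countable subfamily $\{B_\tau(x_i)\}_i$ covering $E^{(1)}\setminus I_\tau(T)$ up to a Lebesgue-null set, with overlap bound $\sum_i \mathbf{1}_{B_\tau(x_i)}\le \xi(n)$. Because each $x_i\notin I_\tau(T)$, we have $B_\tau(x_i)\subset A=\R^{n+1}\setminus T$, so the perimeters $P(E;B_\tau(x_i))$ are genuinely contributions to $P(E;A)$ and can be summed with the Besicovitch multiplicity.

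The third step is the relative isoperimetric inequality in each ball: under the smallness assumption, $|E\cap B_\tau(x_i)|^{n/(n+1)}\le c_n\,P(E;B_\tau(x_i))$ for a dimensional constant $c_n$. Multiplying through by $|E\cap B_\tau(x_i)|^{1/(n+1)}<c\,\tau$ gives $|E\cap B_\tau(x_i)|\le c_n\,c\,\tau\,P(E;B_\tau(x_i))$. Summing over $i$, using $E^{(1)}\setminus I_\tau(T)\subset \bigcup_i B_\tau(x_i)$ (up to null sets) and the overlap bound, yields
\[
|E\setminus I_\tau(T)|\le \sum_i |E\cap B_\tau(x_i)|\le c_n\,c\,\tau\,\xi(n)\,P(E;A)=\tfrac{c_n}{2}\,\sigma\,\tau\,P(E;A),
\]
which, provided the isoperimetric constant is normalized so that $c_n\le 2$, contradicts the hypothesis $\sigma\,\tau\,P(E;A)\le |E\setminus I_\tau(T)|$.

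The only real obstacle is calibrating constants so that the numerology $(\sigma/(2\xi(n)))^{n+1}$ and $\sigma\le\xi(n)/(n+1)$ in the statement come out on the nose; in particular, one wants a version of the relative isoperimetric inequality in $B_\tau$ whose constant, when $|E\cap B_\tau|\ll |B_\tau|$, reduces to the sharp Euclidean constant $[(n+1)\omega_{n+1}^{1/(n+1)}]^{-1}$ (this is precisely the role of the auxiliary upper bound $\sigma\le \xi(n)/(n+1)$: it drives $|E\cap B_\tau(x_i)|$ into the regime where the Euclidean isoperimetric inequality applies to $E\cap B_\tau(x_i)$ as a set in $\R^{n+1}$). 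With this choice the factor $2$ in $2\xi(n)$ is exactly the safety room needed to absorb the isoperimetric constant and close the contradiction. This is in essence the argument given in \cite[Lemma 29.10]{maggiBOOK} and \cite[VI(13)]{Almgren76}, to which I would appeal for the bookkeeping of the sharp constants.
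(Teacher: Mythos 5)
Your contradiction-via-Besicovitch strategy matches the paper's, but the inner estimate supplying the per-ball perimeter lower bound is genuinely different, and there is a gap in it. You apply the \emph{relative} isoperimetric inequality in the ball, $|E\cap B_\tau(x_i)|^{n/(n+1)}\le c_n\,P(E;B_\tau(x_i))$, and the argument closes only if $c_n$ is strictly less than $2$. The justification you give --- that for small volume fraction $c_n$ ``reduces to the sharp Euclidean constant'', and that $\sigma\le\xi(n)/(n+1)$ is exactly what puts you in that regime --- is not right. That hypothesis only guarantees $|E\cap B_\tau(x_i)|<|B_\tau|/2$, so that the minimum in the relative inequality is the term you want; it does not change the constant. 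The small-volume relative extremals in a ball are spherical caps centred on $\pa B_\tau$, whose perimeter-to-volume ratio yields $2^{1/(n+1)}$ times the Euclidean constant (not the Euclidean constant), and the uniform relative constant over all volume fractions up to $1/2$, attained at half-balls, is yet another number. That number does happen to be less than $2$, but this needs to be checked, and the references you cite for ``the bookkeeping of sharp constants'' do not supply the bound in this form --- they argue differently.

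What the paper (and Almgren, and Maggi) does instead is apply only the coarse Euclidean bound $P(F)\ge|F|^{n/(n+1)}$ to $F=E\cap B_r(x)$. The total perimeter of $F$ contains, besides $P(E;B_r(x))$, the spherical ``lid'' $\H^n(E^{(1)}\cap\pa B_r(x))=m'(r)$ with $m(r)=|E\cap B_r(x)|$, so one cannot conclude in one stroke; instead the lid term is absorbed through the differential inequality $m'(r)+\frac{\a}{\tau}\,m(r)\ge m(r)^{n/(n+1)}$, and the radius $\tau_x\in(0,\tau)$ at which the resulting pointwise perimeter lower bound $P(E;B_{\tau_x}(x))>\frac{\a}{\tau}\,m(\tau_x)$ first holds is the one fed into Besicovitch. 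This is why the paper covers by balls of \emph{varying} radius $\tau_x$, whereas you work with the fixed radius $\tau$. The ODE trick trades the relative-isoperimetric constant for a more elaborate radius selection; your version would be a legitimate shortcut, but only after a self-contained proof that the relative constant in a ball is small enough, and as written that step is asserted rather than proved, with an incorrect heuristic attached.
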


\begin{proof}
  By contradiction one assumes that
  \begin{equation}
    \label{nucl 1}
      |E\cap B_\tau(x)|<\Big(\frac{\s}{2\xi(n)}\Big)^{n+1}\,\tau^{n+1}\qquad\forall x\in E^{(1)}\setminus I_\tau(T)\,.
  \end{equation}
  Setting $\a=\xi(n)/\s$, so that $\a\ge n+1$, we claim that \eqref{nucl 1} implies the existence, for each $x\in E^{(1)}\setminus I_\tau(T)$, of $\tau_x\in(0,\tau)$ such that
  \begin{equation}
    \label{nucl 2}
    P(E;B_{\tau_x}(x))>\frac\a\tau\,|E\cap B_{\tau_x}(x)|\,.
  \end{equation}
  In turn \eqref{nucl 2} is in contradiction with \eqref{nucl 1}: indeed, by applying Besicovitch's theorem to $\{\cl(B_{\tau_x}(x)):x\in E^{(1)}\setminus I_\tau(T)\}$ we find an at most countable subset $I$ of $E^{(1)}\setminus I_\tau(T)$ such that $\{\cl(B_{\tau_x}(x))\}_{x\in I}$ is disjoint and
  \begin{eqnarray*}
  |E\setminus I_\tau(T)|&\le&\xi(n)\,\sum_{x\in I}|E\cap B_{\tau_x}(x)|
  <\frac{\xi(n)\,\tau}\a\,\sum_{x\in I}P(E;B_{\tau_x}(x))
  \\
  &\le&\frac{\xi(n)\,\tau\,P(E;A)}\a
  =\tau\,\s\,P(E;A)\le|E\setminus I_\tau(T)|\,,
  \end{eqnarray*}
  a contradiction. We show that \eqref{nucl 1} implies \eqref{nucl 2}: indeed, if \eqref{nucl 1} holds but \eqref{nucl 2} fails, then there exists $x\in E^{(1)}\setminus I_\tau(T)$ such that, setting $m(r)=|E\cap B_r(x)|$ for $r>0$,
  \begin{equation}
    \label{nucl 3}
      \mbox{$m>0$ on $(0,\infty)$}\,,\qquad m(\tau)<\Big(\frac{\tau}{2\a}\Big)^{n+1}
  \end{equation}
  and $(\a/\tau)\,m(r)\ge P(E;B_r(x))$ for every $r\in(0,\tau)$. Adding up $\H^n(\pa B_r(x)\cap E)$, which equals $m'(r)$ for a.e. $r>0$ by the coarea formula, we obtain
  \begin{equation}
    \label{nucl 4}
      m'(r)+\frac\a\tau\,m(r)\ge P(E\cap B_r(x))\ge m(r)^{n/(n+1)}\,,\qquad\mbox{for a.e. $r\in(0,\tau)$}\,.
  \end{equation}
  where in the last inequality we have used that $P(F)\ge |F|^{n/(n+1)}$ whenever $0<|F|<\infty$; see e.g. \cite[Proposition 12.35]{maggiBOOK}.  Since $m>0$ on $(0,\infty)$ we find
  \begin{equation}\nonumber
  \left\{
  \begin{split}
  &\frac\a\tau m(r)\le (1/2) m(r)^{n/(n+1)}
  \\
  &\forall r\in(0,\tau)
  \end{split}
  \right .
  \qquad\mbox{iff}\quad
  \left\{
  \begin{split}
  &m(r)\le (\tau/2\a)^{n+1}
  \\
  &\forall r\in(0,\tau)
  \end{split}
  \right .
  \qquad\mbox{if}\quad m(\tau)\le\Big(\frac{\tau}{2\a}\Big)^{n+1}\,,
  \end{equation}
  where the last condition holds by \eqref{nucl 3}. Thus \eqref{nucl 4} gives $m'(r)\ge(1/2) m(r)^{n/(n+1)}$ for a.e. $r\in(0,\tau)$, thus
  $m(\tau)\ge(\tau/2(n+1))^{n+1}\ge(\tau/2\a)^{n+1}$ as $\a\ge n+1$, a contradiction.
\end{proof}

\subsection{Isoperimetry, lower bounds and collapsing}\label{section spherical collapsing} Given an $L^1$-converging sequence of sets of finite perimeter $\{E_j\}_j$, the boundary of the $L^1$-limit set $E$ will be (in general) strictly included in $K=\spt\,\mu$, where $\mu$ is the weak-star limit of the Radon measures defined by the boundaries of the $E_j$'s. In the next lemma we show that, under some mild bounds on $\mu$ and $E_j$, if $\mu$ is absolutely continuous with respect to $\H^n \llcorner K$ then the Radon-Nikod\'ym density $\theta$ of $\mu$ is everywhere larger than $1$, and is actually larger than $2$ at a.e. point of $K\setminus\pa^*E$ (that is, a cancellation can happen only when boundaries are collapsing).

\begin{lemma}[Collapsing lemma]\label{lemma llb}
Let $K$ be a relatively compact and $\H^n$-rectifiable set in $\Om$, let $E\subset\Om$ be a set of finite perimeter with $\Om\cap\pa^*E\subset K$, and let $\{E_j\}_j\subset\E$ such that $E_j\to E$ in $L^1_{{\rm loc}}(\Om)$, and $\mu_j\weakstar\mu$ as Radon measures in $\Om$, where $\mu_j=\H^n\llcorner(\Om\cap\pa E_j)$ and $\mu=\theta\,\H^n\llcorner K$ for a Borel function $\theta$. If $\Om'\subset\Om$ and $r_*>0$ are such that for every $x\in K\cap\Om'$ and a.e. $r<r_*$ with $B_r(x)\cc\Om'$ we have
\begin{eqnarray}
  \label{llb1}
  \mu(B_r(x))&\ge&c(n)\,r^n\,,
  \\
  \label{llb2}
  \liminf_{j\to\infty}\H^n(B_r(x)\cap\pa E_j)&\le& C(n)\,\liminf_{j\to\infty}\,\H^n(\pa B_r(x)\setminus A_{r,j}^0)\,,
\end{eqnarray}
where $A_{r,j}^0$ denotes an $\H^n$-maximal connected component of $\pa B_r(x)\setminus\pa E_j$, then $\theta(x)\ge 1$ for $\H^n$-a.e. $x\in K\cap\Om'$, and $\theta(x)\ge 2$ for $\H^n$-a.e. $x\in (K\setminus\pa^*E)\cap\Om'$.
\end{lemma}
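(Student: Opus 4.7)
My plan is to prove the two claims separately. The inequality $\theta\ge 1$ on $\pa^*E\cap\Om'$ follows from lower semicontinuity of perimeter, while $\theta\ge 2$ on $(K\setminus\pa^*E)\cap\Om'$ (which also gives $\theta\ge 1$ there) is the main content, obtained via a slicing argument over the approximate tangent plane of $K$.

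For the first bound, from $\mu_j\weakstar\mu$, $\pa^*E_j\subset\pa E_j$, and the $L^1_{\loc}$-lower semicontinuity of perimeter, for every open $U\subset\Om$
\begin{equation*}
\mu(U)\ \ge\ \liminf_{j\to\infty}\mu_j(U)\ \ge\ \liminf_{j\to\infty}P(E_j;U)\ \ge\ P(E;U)\ =\ \H^n(U\cap\pa^*E)\,.
\end{equation*}
Applying this with $U=B_r(x)$ at an approximate tangent point $x$ of $K$ with $x\in\pa^*E$ and dividing by $\omega_n r^n$ as $r\to 0^+$ gives $\theta(x)\ge 1$ for $\H^n$-a.e.\ $x\in\pa^*E\cap\Om'$.

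For $\theta\ge 2$ on $(K\setminus\pa^*E)\cap\Om'$, fix a point $x$ in that set which is a Lebesgue point of $\theta$ and an approximate tangent point of $K$. By \eqref{federers theorem}, $x\in E^{(0)}\cup E^{(1)}$; WLOG $x\in E^{(0)}$, so $|E\cap B_r(x)|=o(r^{n+1})$. Let $T=T_xK$, $\nu\in T^\perp$ a unit vector, and $\pi:\R^{n+1}\to T$ the orthogonal projection. For small $r$ and $h=h(r)\to 0^+$ satisfying $h/r\to 0$ but $h$ much bigger than the tangent-plane approximation scale of $K$ at $x$, the cylinder $C_{r,h}=\{y:|\pi(y-x)|<r,\ |(y-x)\cdot\nu|<h\}$ contains $K\cap B_r(x)$ and satisfies $\mu(C_{r,h})=\mu(B_r(x))(1+o_r(1))$. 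By the BV slicing theorem applied to $\chi_{E_j}$ along $\nu$ (cf.\ \cite{maggiBOOK}),
\begin{equation*}
\mu_j(C_{r,h})\ =\ \H^n(C_{r,h}\cap\pa E_j)\ \ge\ \int_{T\cap B_r(x)}V_j(y)\,dy\,,\qquad V_j(y)\ :=\ \mathrm{ess\,Var}\left(\chi_{E_j}^y|_{\sigma_y}\right)\,,
\end{equation*}
where $\sigma_y=\pi^{-1}(y)\cap C_{r,h}$ is the fiber of length $2h$. Since $|E_j\cap C_{r,h}|\le|E_j\cap B_r(x)|\to|E\cap B_r(x)|=o(r^{n+1})$, Fubini shows that for $\H^n$-a.e.\ $y\in T\cap B_r(x)$, $\chi_{E_j}^y$ is BV with small $L^1$-norm and BV traces $0$ at both endpoints $y\pm h\nu$; for such $y$, $V_j(y)$ is even, and $V_j(y)\ge 2$ precisely when $\pa E_j$ meets $\sigma_y$, that is, when $y\in P_j:=\pi(\pa E_j\cap C_{r,h})\cap T\cap B_r(x)$.

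The \emph{main obstacle} is to prove $|P_j|\ge(1-o(1))\omega_n r^n$ as $j\to\infty$ and $r\to 0^+$. Granting this, integrating $V_j\ge 2$ over $P_j$ yields $\mu_j(C_{r,h})\ge 2(1-o(1))\omega_n r^n$; taking $\liminf_j$, using $\mu(C_{r,h})/(\omega_n r^n)\to\theta(x)$ at the tangent point, and sending $r\to 0^+$ gives $\theta(x)\ge 2$. I expect this projection estimate to be the technically delicate step, drawing on all three of (llb1), (llb2), and the flatness of $K$ at a tangent point: heuristically, any hole $H\subset T\cap B_r(x)$ of positive measure missed by $P_j$ would support a tube $\pi^{-1}(H)\cap C_{r,h}$ disjoint from $\pa E_j$ whose $(n+1)$-volume $2h|H|$ dwarfs $|E_j\cap C_{r,h}|$, forcing the tube into $E_j^c$ and letting the two spherical caps of $\pa B_r(x)$ on either side of $T$ coalesce within $\cl(B_r(x))\setminus\pa E_j$; one then expects $A^0_{r,j}$ to exhaust nearly all of $\pa B_r(x)$, yielding $\H^n(\pa B_r(x)\setminus A^0_{r,j})=o(r^n)$ and contradicting (llb1) via (llb2). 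Turning this heuristic into a rigorous argument is where the technical weight of the proof lies.
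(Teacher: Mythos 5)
Your outline for $\theta\ge 1$ on $\pa^*E$ via lower semicontinuity of perimeter is fine, and the cylindrical slicing idea for $\theta\ge2$ is plausible in spirit, but the proof has a genuine gap exactly where you flag it: the projection estimate $|P_j|\ge(1-{\rm o}(1))\,\om_n r^n$ is never established, and the heuristic you offer to obtain it does not work as stated. The set $A^0_{r,j}$ in \eqref{llb2} is a connected component of $\pa B_r(x)\setminus\pa E_j$ \emph{as a subset of the sphere}; a tube $\pi^{-1}(H)\cap C_{r,h}$ through the interior of $B_r(x)$ that avoids $\pa E_j$ does not merge spherical components (for instance, $\pa E_j$ could contain an equatorial band of $\pa B_r(x)$ while the interior of the ball is untouched), so no contradiction with \eqref{llb1}--\eqref{llb2} materializes from the existence of such a tube. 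Moreover, the tube lives at heights $|t|<h\ll r$ and need not reach $\pa B_r(x)$ at all. A secondary issue: the slicing inequality controls $P(E_j;C_{r,h})=\H^n(C_{r,h}\cap\pa^*E_j)$, so $V_j(y)\ge 2$ only on fibers whose one-dimensional slice of $E_j$ has positive measure (equivalently, meets $\pa^*E_j$ in the slicing sense), not on fibers merely meeting the topological boundary; lower-bounding the measure of \emph{that} set is the same difficulty in disguise, and the volume information $|E_j\cap C_{r,h}|={\rm o}(r^{n+1})$ pushes in the wrong direction (a thin pancake of $E_j$ over a small patch of $T$ is not excluded by volume alone).

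For contrast, the paper avoids the projection estimate entirely by slicing with spheres rather than with lines. At a tangent point, \eqref{llb1} forces $K\cap B_r(x)$ into a thin slab, so the upper and lower spherical caps $\Sigma^\pm_{r,\sigma}$ meet $\pa E_j$ in vanishing $\H^{n-1}$-measure; by spherical isoperimetry each cap carries a nearly full component $A^\pm_{r,j}$, and \eqref{llb2} is used precisely to show $A^+_{r,j}$ and $A^-_{r,j}$ cannot both lie in $A^0_{r,j}$ (otherwise $\H^n(\pa B_r(x)\setminus A^0_{r,j})\le C\sigma r^n$, contradicting \eqref{llb1}). Thus the two largest components $A^0_{r,j},A^1_{r,j}$ each have measure close to $\sigma_n r^n/2$, and the quantitative case of Lemma \ref{statement isoperimetry on spheres} gives $\H^{n-1}(\pa^*A^{0}_{r,j}),\H^{n-1}(\pa^*A^{1}_{r,j})\ge(\sigma_{n-1}-\tau)r^{n-1}$. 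The factor $2$ at $x\in E^{(0)}$ then comes from showing that, for most radii (those where both components avoid $\cl(E_j)$, which is a.e.\ $r$ up to a set controlled by $|E_j\cap B_{r_0}(x)|={\rm o}(r_0^{n+1})$), the reduced boundaries $\pa^*A^0_{r,j}$ and $\pa^*A^1_{r,j}$ are $\H^{n-1}$-essentially disjoint subsets of $\pa B_r(x)\cap\pa E_j$, so that $f_j'(r)\ge 2(\sigma_{n-1}-\tau)r^{n-1}$; integration in $r$ yields $\theta\ge2$. If you want to salvage your cylindrical approach, you would need an analogue of this disjointness/two-sheets mechanism; the connectivity heuristic as written cannot supply it.
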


The bound $\theta\ge 1$ follows by arguing exactly as in \cite[Proof of Theorem 2, Step three]{DLGM}, and has nothing to do with the fact that the measures $\mu_j$ are defined by boundaries; the latter information is in turn crucial in obtaining the bound $\theta\ge 2$, and requires a new argument. For the sake of clarity, we also give the details of the $\theta\ge 1$ bound, which in turn is based on spherical isoperimetry.

\begin{lemma}[Spherical isoperimetry]
  \label{statement isoperimetry on spheres} Let $\Sigma\subset\R^{n+1}$ denote a spherical cap\footnote{That is, $\Sigma=\SS^n\cap H$ where $H$ is an open half-space of $\R^{n+1}$.} in the $n$-dimensional unit sphere $\SS^n$, possibly with $\Sigma=\SS^n$. If $K$ is a compact set in $\R^{n+1}$ and $\{A^h\}_{h=0}^\infty$ is the family of the open connected components of $\Sigma\setminus K$, ordered so to have $\H^n(A^h)\ge\H^n(A^{h+1})$, then
  \begin{equation}
    \label{spherical isoperimetry}
    \H^n(\Sigma\setminus A^0)\le C(n)\,\H^{n-1}(\Sigma\cap K)^{ n/(n-1)}\,.
  \end{equation}
  Moreover, if $\Sigma=\SS^n$, $\s_n=\H^n(\SS^n)$ and $\H^{n-1}(\SS^n\cap K)<\infty$, then each $A^h$ is a set of finite perimeter in $\SS^n$ and for every $\tau>0$ there exists $\s>0$ such that
  \begin{equation}
    \label{quant isop hp}
      \min\Big\{\H^n(A^0),\H^n(A^1)\Big\}=\H^n(A^1)\ge\frac{\s_n}2-\s
  \end{equation}
  implies
  \begin{equation}
    \label{quant isop tesis}
      \min\Big\{\H^{n-1}(\pa^* A^0),\H^{n-1}(\pa^* A^1)\Big\}\ge\s_{n-1}-\tau\,.
  \end{equation}
  Here $\pa^*A^h$ denotes the reduced boundary of $A^h$ in $\SS^n$.
\end{lemma}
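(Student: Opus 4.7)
I would split the argument according to the two claims.

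For the global bound \eqref{spherical isoperimetry}, I may assume $\H^{n-1}(\Sigma \cap K) < \infty$, else nothing to prove. Since $A^0$ is the largest component and $\sum_h \H^n(A^h) \le \H^n(\Sigma)$, each $A^h$ with $h \ge 1$ must satisfy $\H^n(A^h) \le \H^n(\Sigma)/2$ (otherwise $A^0$ would too, and the total measure would exceed $\H^n(\Sigma)$). Each $A^h$ is open in $\Sigma$ with $\pa A^h \cap \Sigma \subset \Sigma \cap K$, so Federer's criterion makes $A^h$ a set of finite perimeter in $\Sigma$ with $\pa^* A^h \subset \Sigma \cap K$; pairwise disjointness of the open sets $\{A^h\}$ yields pairwise $\H^{n-1}$-essentially disjoint reduced boundaries, and hence $\sum_{h \ge 1} \H^{n-1}(\pa^* A^h) \le \H^{n-1}(\Sigma \cap K)$. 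Applying the relative isoperimetric inequality on $\Sigma$ with a purely dimensional constant (obtained, when $\Sigma \ne \SS^n$, by reflecting each $A^h$ across $\pa \Sigma$ to produce a symmetric set in $\SS^n$ of doubled measure and doubled perimeter, to which Schmidt's spherical isoperimetric inequality applies), I would get
\begin{equation*}
\H^n(A^h) \le C(n)\, \H^{n-1}(\pa^* A^h)^{n/(n-1)} \qquad \text{for every } h \ge 1.
\end{equation*}
Summing and using the elementary estimate $\sum_h a_h^{n/(n-1)} \le \big(\sum_h a_h\big)^{n/(n-1)}$ (valid for $a_h \ge 0$ since the exponent is $\ge 1$) would then give
\begin{equation*}
\H^n(\Sigma \setminus A^0) = \sum_{h \ge 1} \H^n(A^h) \le C(n)\, \H^{n-1}(\Sigma \cap K)^{n/(n-1)}.
\end{equation*}

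For the stability estimate \eqref{quant isop tesis} in the case $\Sigma = \SS^n$, I would first note that $\H^{n-1}(\SS^n \cap K) < \infty$ and Federer's criterion make each $A^h$ a set of finite perimeter in $\SS^n$. The hypothesis $\H^n(A^1) \ge \s_n/2 - \s$, combined with $\H^n(A^0) \ge \H^n(A^1)$ and $\H^n(A^0) + \H^n(A^1) \le \s_n$, forces both $\H^n(A^0)$ and $\H^n(A^1)$ to lie in the interval $[\s_n/2 - \s,\, \s_n/2 + \s]$. I would then invoke the sharp spherical isoperimetric inequality of E.~Schmidt: for every set of finite perimeter $E \subset \SS^n$ one has $\H^{n-1}(\pa^* E) \ge I(\H^n(E))$, where the spherical isoperimetric profile $I : [0,\s_n] \to [0,\infty)$ is continuous and satisfies $I(\s_n/2) = \s_{n-1}$ (a hemisphere being bounded by an equator of $\H^{n-1}$-measure $\s_{n-1}$). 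Continuity of $I$ at the midpoint yields, for every $\tau > 0$, a $\s = \s(\tau,n) > 0$ such that $I(v) \ge \s_{n-1} - \tau$ whenever $|v - \s_n/2| \le \s$; applying this to $v = \H^n(A^h)$ for $h = 0, 1$ gives \eqref{quant isop tesis}.

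\textbf{Main obstacle.} The only delicate technical point is in Part 1, namely guaranteeing that the relative isoperimetric constant on a general cap $\Sigma$ depends only on $n$; the reflection-to-$\SS^n$ device sketched above is the standard way to handle this, together with the $\H^{n-1}$-essential disjointness of the $\pa^* A^h$'s needed to bundle the component-by-component estimates into a single bound in terms of $\H^{n-1}(\Sigma \cap K)$. Part 2 reduces to the continuity of the classical Schmidt isoperimetric profile on the round sphere, so no genuine difficulty arises there.
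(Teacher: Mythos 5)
Your overall architecture is sound, and for the quantitative statement \eqref{quant isop tesis} your route is genuinely different from (and arguably cleaner than) the paper's: the paper argues by contradiction, extracting $L^1$-limits of the two largest components along a sequence $K_j$, observing that these limits partition $\SS^n$ into two sets of measure $\s_n/2$, and then using lower semicontinuity of perimeter against the fact that $\inf\{\H^{n-1}(\pa^*A):\H^n(A)=\s_n/2\}=\s_{n-1}$. You instead apply the sphere's isoperimetric profile to each component separately and use its continuity at $\s_n/2$. Your version buys an explicit modulus $\s=\s(\tau,n)$ at the price of invoking the continuity of the profile (harmless, since the profile is explicit via geodesic balls), whereas the paper's compactness argument needs only the value of the infimum at exactly half measure. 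For \eqref{spherical isoperimetry} the paper simply cites \cite{DLGM}, so there you are supplying a proof where the paper gives a reference.

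That said, one step in your Part 1 is false as stated: disjoint open sets do \emph{not} have $\H^{n-1}$-essentially disjoint reduced boundaries. If $K$ is the equator, the two open hemispheres share their entire reduced boundary; if $K$ consists of two nearby parallel $(n-1)$-spheres, the thin band between them and each adjacent cap share a full copy of one of those spheres, and then $\sum_{h\ge1}\H^{n-1}(\pa^*A^h)$ genuinely exceeds $\H^{n-1}(\Sigma\cap K)$. The correct (and standard) substitute is a multiplicity bound: at $\H^{n-1}$-a.e.\ point of $\pa^*A^h$ the set $A^h$ has density $1/2$, and since the $A^h$ are pairwise disjoint no point can carry density $1/2$ for three of them; hence each point of $\Sigma\cap K$ lies in at most two of the sets $\pa^*A^h$, so $\sum_h\H^{n-1}(\pa^*A^h)\le 2\,\H^{n-1}(\Sigma\cap K)$. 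The extra factor $2^{n/(n-1)}$ is absorbed into $C(n)$, so your conclusion survives. A smaller caveat: the reflection device yields a dimensional relative isoperimetric constant only for caps no larger than a hemisphere; for a larger cap the symmetrized set $E\cup\tilde E$ can occupy more than half of $\SS^n$, and the spherical isoperimetric inequality then controls its complement rather than $E$ itself. Since the lemma allows arbitrary caps, you would need either to quote the relative isoperimetric inequality on caps with its known uniform constant, or to treat large caps by a separate (but easy) argument.
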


\begin{proof}
  This is \cite[Lemma 9]{DLGM}. However, \eqref{quant isop tesis} is stated in a weaker form in \cite[Lemma 9]{DLGM}, so we give the details. Arguing by contradiction, we can find $\tau>0$ and $\{K_j\}_j$ such that, for $\a=0,1$, $\H^{n-1}(\pa^*A^\a_j)\le\s_{n-1}-\tau$ for every $j$, but $\H^n(A_j^\a)\to\s_n/2$ as $j\to\infty$. Since $\s_n=\H^n(\SS^n)$ and $A_j^0\cap A_j^1=\emptyset$, we find that, for $\a=0,1$, $A_j^\a\to A^\a$ in $L^1(\SS^n)$ where $A^0\cap A^1=\emptyset$ and $A^0\cup A^1$ is $\H^n$-equivalent to $\SS^n$. Therefore $\H^{n-1}(\pa^*A^0)=\H^{n-1}(\pa^*A^1)\le\s_{n-1}-\tau$, where we have used lower semicontinuity of perimeter. Since $\inf\,\H^{n-1}(\pa^*A)$ with $\H^n(A)=\s_n/2$ is equal to $\s_{n-1}$ we have reached a contradiction.
\end{proof}

\begin{proof}[Proof of Lemma \ref{lemma llb}] {\it Step one}: We fix $x\in K\cap\Om'$ such that $\H^n\llcorner(K-x)/r\weakstar \H^n\llcorner T_xK$ as $r\to 0^+$. Setting $\nu(x)^\perp=T_xK$ for $\nu(x)\in\SS^n$, by the lower density estimate \eqref{llb1} we easily find that for every $\s>0$ there exists $r_0=r_0(\s,x)\in(0,\min\{r_*,\dist(x,\pa\Om')\})$ such that $|(y-x)\cdot\nu(x)|<\s\,r$ for every $y\in K\cap B_r(x)$ and every $r<r_0$. In particular,
\[
\lim_{j \to \infty} \H^n(\pa E_j \cap \{ y \in B_r(x) \, \colon \, |(y-x) \cdot \nu(x)| > \s \, r\}) = 0 \qquad \mbox{for every $r \leq r_0$}\,,
\]
and thus by the coarea formula (see \cite[Equation (2.13)]{DLGM})
\begin{equation}
  \label{from dlgm}
  \lim_{j\to\infty}\H^{n-1}(\Sigma_{r,\s}^\pm\cap \pa E_j)=0\qquad\mbox{for a.e. $r\le r_0$}\,,
\end{equation}
where we have set
\begin{eqnarray*}
\Sigma_{r,\s}^+&=&\big\{y\in \pa B_r(x):(y-x)\cdot\nu(x)>\s\,r\big\}\,,
\\
\Sigma_{r,\s}^-&=&\big\{y\in \pa B_r(x):(y-x)\cdot\nu(x)<-\s\,r\big\}\,.
\end{eqnarray*}
Let $A_{r,j}^+$ be an $\H^n$-maximal connected component of $\Sigma_{r,\s}^+\setminus \pa E_j$, and define similarly $A_{r,j}^-$. Equations \eqref{from dlgm} and \eqref{spherical isoperimetry} imply that, for a.e. $r<r_0$,
\begin{equation}
  \label{from dlgm2}
  \lim_{j\to\infty}\H^n(A_{r,j}^\pm)=\H^n(\Sigma_{r,\s}^\pm)\,.
\end{equation}
Now let $\{A^h_{r,j}\}_{h=0}^\infty$ denote the open connected components of $\pa B_r(x)\setminus\pa E_j$, ordered by decreasing $\H^n$-measure. We claim that
\begin{equation}
  \label{llb3}
  \mbox{if \eqref{from dlgm2} holds, then either $A_{r,j}^+$ or $A_{r,j}^-$ is not contained in $A_{r,j}^0$}\,.
\end{equation}
Indeed, if for some $r$ we have $A_{r,j}^+\cup A_{r,j}^-\subset A_{r,j}^0$, then by \eqref{llb2} and \eqref{from dlgm2} we find
\begin{equation}
  \label{ias later}
  \mu(B_r(x))\le\liminf_{j\to\infty}\mu_j(B_r(x))\le C(n)\,\liminf_{j\to\infty}\H^n(\pa B_r\setminus A_{r,j}^0)\le C(n)\,r^n\,\s\,,
\end{equation}
a contradiction to \eqref{llb1} if $\s\le\s_0(n)$ for a suitable $\s_0(n)$. By \eqref{llb3} and \eqref{from dlgm2},
\begin{equation}
  \label{density 1 1}
  \min\Big\{\H^n(A^0_{r,j}),\H^n(A^1_{r,j})\Big\}\ge \Big(\frac{\sigma_n}2-C(n)\,\sigma\Big)\,r^n\qquad\mbox{for a.e. $r<r_0$}\,.
\end{equation}
By Lemma \ref{statement isoperimetry on spheres} and \eqref{density 1 1}, given $\tau>0$, if $\s$ is small enough in terms of $n$ and $\tau$, then
\begin{equation}
  \label{density 1 2}
  \min\Big\{\H^{n-1}(\pa^*A^0_{r,j}),\H^{n-1}(\pa^*A^1_{r,j})\Big\}\ge \big(\sigma_{n-1}-\tau\big)\,r^{n-1}\qquad\mbox{for a.e. $r<r_0$}\,,
\end{equation}
where $\pa^*A^{\a}_{r,j}$ is the reduced boundary of $A^{\a}_{r,j}$ as a subset of $\pa B_r(x)$. Since $A_{r,j}^0$ is a connected component of $\pa B_r(x)\setminus \pa E_j$ we have
\begin{equation}
  \label{llb4}
  \big(\sigma_{n-1}-\tau\big)\,r^{n-1}\le\H^{n-1}(\pa^*A^0_{r,j})\le \H^{n-1}(\pa B_r(x)\cap \pa E_j)\,.
\end{equation}
Now if $f_j(r)=\mu_j(B_r(x))$ and $f(r)=\mu(B_r(x))$ then by the coarea formula we easily find that $f_j\to f$ a.e. with $\liminf_{j\to\infty}f_j'(r)\le f'(r)\le Df$, where $Df$ denotes the distributional derivative of $f$. Hence, letting $j\to\infty$ and $\tau\to 0^+$ in \eqref{llb4} we obtain $Df\ge \sigma_{n-1}\,r^{n-1}\,dr$ on $(0,r_0)$. As $\om_n=n\,\sigma_{n-1}$, we conclude that $\theta(x)\ge 1$. We stress once more that so far we have just followed the argument of \cite[Proof of Theorem 2, Step three]{DLGM}.

\medskip

\noindent {\it Step two}: We use the boundary structure to show that $\theta\ge2$ $\H^n$-a.e. on $\Om'\cap(K\setminus\pa^*E)$. Since $\{E^{(0)}\,,E^{(1)}\,,\pa^*E\}$ is an $\H^n$-a.e. partition of $\R^{n+1}$, we can assume that $x\in (E^{(0)}\cup E^{(1)})\cap K\cap\Om'$.
We consider first the case $x\in E^{(0)}$. Given $\s>0$, up to decreasing $r_0$,
\begin{equation}
  \label{density 2 1}
  \s\, r_0^{n+1}\ge\lim_{j\to\infty}|E_j\cap B_{r_0}(x)|=\lim_{j\to\infty}\int_0^{r_0}\,\H^n(E_j\cap\pa B_r(x))\,dr\,.
\end{equation}
Let us consider the measurable set $I_j\subset(0, r_0)$
\[
I_j=\big\{r\in(0,r_0):A_{r,j}^0\cup A_{r,j}^1\subset\pa B_r(x)\setminus\cl(E_j)\big\}\,.
\]
We claim that
\begin{equation}
  \label{density 2 2}
\H^{n-1}(\pa^*A_{r,j}^0\cap \pa^*A_{r,j}^1)=0\qquad\forall r\in I_j\,.
\end{equation}
Indeed, if $r\in I_j$, then $A_{r,j}^0$, $A_{r,j}^1$, and $\pa B_r(x)\cap E_j$ are disjoint sets of finite perimeter in $\pa B_r(x)$, and in particular
\begin{eqnarray*}
  \nu_{A_{r,j}^0}&=&-\nu_{A_{r,j}^1}\,,\hspace{0.8cm}\qquad\mbox{$\H^{n-1}$-a.e. on $\pa^*A_{r,j}^0\cap\pa^*A_{r,j}^1$}\,,
  \\
  \nu_{A_{r,j}^0}&=&-\nu_{\pa B_r(x)\cap E_j}\,\qquad\mbox{$\H^{n-1}$-a.e. on $\pa^*A_{r,j}^0\cap\pa^*[\pa B_r(x)\cap E_j]$}\,,
  \\
  \nu_{A_{r,j}^1}&=&-\nu_{\pa B_r(x)\cap E_j}\,\qquad\mbox{$\H^{n-1}$-a.e. on $\pa^*A_{r,j}^1\cap\pa^*[\pa B_r(x)\cap E_j]$}\,.
\end{eqnarray*}
At the same time, since $\{A_{r,j}^h\}_{h=0}^\infty$ are connected components of $\pa B_r(x)\setminus \pa E_j$,
\[
\pa^*A_{r,j}^h\subset\pa^*[\pa B_r(x)\cap E_j]\qquad\mbox{modulo $\H^n$}
\]
and thus $\H^{n-1}$-a.e. on $\pa^*A_{r,j}^0\cap\pa^*A_{r,j}^1$ we have
\[
\nu_{\pa B_r(x)\cap E_j}=-\nu_{A_{r,j}^0}=\nu_{A_{r,j}^1}=-\nu_{\pa B_r(x)\cap E_j}
\]
a contradiction. By \eqref{density 1 2} and \eqref{density 2 2}, given $\tau>0$ and provided $\s$ is small enough in terms of $n$ and $\tau$, for a.e. $r\in I_j$ we find
\begin{eqnarray*}
  f_j'(r)&\ge&\H^{n-1}(\pa B_r(x)\cap \pa E_j)\ge\H^{n-1}(\pa^*A_{r,j}^0\cup\pa^*A_{r,j}^1)
  \\
  &=&\H^{n-1}(\pa^*A_{r,j}^0)+\H^{n-1}(\pa^*A_{r,j}^1)
  \ge2\,\big(\sigma_{n-1}-\tau\big)\,r^{n-1}\,.
\end{eqnarray*}
Hence,
\begin{eqnarray}\nonumber
  f_j( r_0)&\ge& 2\,\big(\sigma_{n-1}-\tau\big)\,\frac{ r_0^n}n-C(n)\int_{(0, r_0)\setminus I_j}r^{n-1}\,dr
  \\\label{density 2 3}
  &\ge&2\,\big(\sigma_{n-1}-\tau\big)\,\frac{ r_0^n}n-C(n)\, r_0^{1/n}\,\Big(\int_{(0, r_0)\setminus I_j}r^n\,dr\Big)^{(n-1)/n}\,.
\end{eqnarray}
We notice that for a.e. $r \in \left( 0, r_0 \right)  \setminus I_j$, \eqref{density 1 1} gives
\[
\H^n(E_j\cap\pa B_r(x))\ge\min\Big\{\H^n(A^0_{r,j}),\H^n(A^1_{r,j})\Big\}\ge \Big(\frac{\sigma_n}2-C(n)\,\s\Big)\,r^n\,,
\]
so that \eqref{density 2 1} implies
\begin{equation}
  \label{density 2 4}
  \s\, r_0^{n+1}\ge c(n)\,\limsup_{j\to\infty}\int_{(0, r_0)\setminus I_j}\,r^n\,dr\,.
\end{equation}
If we combine \eqref{density 2 3} and \eqref{density 2 4} and let $j\to\infty$, then we find
\[
f(r_0)=\lim_{j\to\infty}f_j( r_0)\ge 2\,\big(\sigma_{n-1}-\tau\big)\,\frac{ r_0^n}n-C(n)\, r_0^{1/n}\,\Big(\s\, r_0^{n+1}\Big)^{(n-1)/n}
\]
Dividing by $ r_0^n$ and letting $ r_0\to 0^+$, $\s\to 0^+$ and $\tau\to 0^+$ we find $\theta(x)\ge 2$ whenever $x\in E^{(0)}\cap K\cap\Om'$. The case when $x\in E^{(1)}$ is analogous and the details are omitted.
\end{proof}

\section{Existence of generalized minimizers: Proof of Theorem \ref{thm lsc}}\label{section existence of generalized minimizers} Given the length of the proof, we provide a short overview. In step one, we check that $\psi(\e)<\infty$ by using the open neighborhoods of a minimizer $S$ of $\ell$ as comparison sets for $\psi(\e)$. We remark that this is the only point of the proof where \eqref{hp on W and C} is used. It is important here to allow for sufficiently non-smooth sets in the competition class $\E$: indeed, minimizers of $\ell$ are known to be smooth only outside of a close $\H^n$-negligible set in arbitrary dimension. Once $\psi(\e)<\infty$ is established, we consider a minimizing sequence $\{E_j\}_j$ for $\psi(\e)$, so that $E_j\in\E$, $|E_j|=\e$, $\Om\cap\pa E_j$ is $\C$-spanning $W$ and
\begin{equation}
\label{minimizing sequence}
\H^n(\Om\cap\pa E_j)\le \H^n(\Om\cap\pa F)+\frac1j\qquad\forall F\in\E\,,\,|F|=\e\,,\,\mbox{$\Om\cap\pa F$ is $\C$-spanning $W$}\,.
\end{equation}
We want to apply \eqref{minimizing sequence} to the comparison sets constructed in section \ref{section five competitors}, but, in general, those local variations do not preserve the volume constraint. A family of volume-fixing variations acting uniformly on $\{E_j\}_j$ is constructed through the nucleation lemma (Lemma \ref{statement nucleation}) following some ideas introduced by Almgren in the existence theory of minimizing clusters \cite{Almgren76}; see steps two and three. In step four we exploit cup and cone competitors to show that, up to extracting subsequences, $\H^n\llcorner(\Om\cap\pa E_j)\weakstar\mu=\theta\,\H^n\llcorner K$ as Radon measures in $\Om$, and $E_j\to E$ in $L^1_{{\rm loc}}(\Om)$, for a pair $(K,E)\in\KK$ and for an upper semicontinuous function $\theta\ge1$ on $K$. An application of Lemma \ref{lemma llb} shows that $\theta\ge 2$ $\H^n$-a.e. on $K\setminus\pa^*E$, thus proving $\psi(\e)\ge\F(K,E)$. In order to show that $\psi(\e)=\F(K,E)$, and thus that $(K,E)$ is a generalized minimizer of $\psi(\e)$, we need to exclude that $\Om\cap\pa E_j$ concentrates area by folding against $K$, at infinity, or against the wire frame. By using slab competitors we prove that $\Om\cap\pa E_j$, in its convergence towards $K$, cannot fold at all near points in $\pa^*E$, and can fold at most twice near points in $K\cap(E^{(0)}\cup E^{(1)})$ (step five). In step six, concentration of area at the boundary is ruled out by a deformation argument based on Lemma \ref{lemma close by Lipschitz at boundary}. Finally, in step seven, we exclude area (and volume) concentration at infinity by using exterior cup competitors to construct a uniformly bounded minimizing sequence.

\begin{proof}[Proof of Theorem \ref{thm lsc}] \noindent {\it Step one}: We show that
\begin{equation}
  \label{psi eps basic bounds}
  \psi(\e)\le 2\,\ell+C(n)\,\e^{n/(n+1)}\qquad\forall\e>0\,.
\end{equation}
Let $S$ be a minimizer of $\ell$, and let $\eta_0>0$ be such that \eqref{hp on W and C} holds. If $\eta\in(0,\eta_0)$, then the open $\eta$-neighborhood $U_\eta(S)$ of $S$ is such that $\Om\cap\pa U_\eta(S)$ is $\C$-spanning $W$: otherwise we could find $\eta\in(0,\eta_0)$ and $\g\in\C$ such that $\g\cap \pa U_\eta(S)=\emptyset$. Since $\g$ is connected, we would either have $\g\subset\{x:\dist(x,S)>\eta\}$, against the fact that $S$ is $\C$-spanning; or we would have $\g\subset U_{\eta}(S)$, against \eqref{hp on W and C}. Hence $\Om\cap\pa U_\eta(S)$ is $\C$-spanning $W$.

As proved in \cite{DLGM}, $S$ is $\H^n$-rectifiable. Moreover, as shown in Theorem \ref{thm density for S fine} in the appendix, we have
\begin{equation}
  \label{mememe}
  \H^n(S\cap B_r(x))\ge c(n)\,r^n\qquad\forall x\in\cl(S)\,,r<\rho_0
\end{equation}
where $\rho_0$ depends on $W$, so that $\H^n(S)<\infty$ implies that $\cl(S)$ is compact. This density estimate has two more consequences: first, combined with \cite[Corollary 6.5]{maggiBOOK}, it implies $\H^n(\cl(S)\setminus S)=0$; second, it allows us to exploit \cite[Theorem 2.104]{AFP} to find
\begin{equation}
  \label{mink step1}
  |U_\eta(S)|=2\,\eta\,\H^n(\cl(S))+{\rm o}(\eta)=2\,\eta\,\H^n(S)+{\rm o}(\eta)\qquad\mbox{as $\eta\to 0^+$}\,.
\end{equation}
By the coarea formula for Lipschitz maps applied to the distance function from $S$, see \cite[Theorem 18.1, Remark 18.2]{maggiBOOK}, we have
\[
|U_{\eta}(S)\cap A|=\int_0^\eta\,P(U_t(S);A)\,dt=\int_0^\eta\,\H^n(A\cap\pa U_t(S))\,dt\,,\qquad\mbox{$\forall A\subset\R^{n+1}$ open}\,,
\]
so that $U_\eta(S)$ is a set of finite perimeter in $\R^{n+1}$ and $\H^n(\pa U_\eta(S)\setminus\pa^*U_\eta(S))=0$ for a.e. $\eta>0$. Summarizing, we have proved that, for a.e. $\eta\in(0,\eta_0)$,
\[
F_\eta=\Om\cap U_\eta(S)\in\E\,,\qquad\mbox{$\Om\cap\cl(\pa^*F_\eta)=\Om\cap\pa F_\eta$ is $\C$-spanning $W$}\,,
\]
and, by \eqref{mink step1},
\[
f(\eta)=|F_\eta|=\int_0^\eta P(F_t;\Om)\,dt=\int_0^\eta\,P(U_t(S);\Om)\,dt\le 2\,\eta\,\H^n(S)+{\rm o}(\eta)\,.
\]
Notice that $f(s)$ is absolutely continuous with $f(\eta)=\int_0^\eta\,f'(t)\,dt$ and $f'(t)=P(F_t;\Om)$ for a.e. $t\in(0,\eta)$. Hence, for every $\eta>0$ there exist $t_1(\eta),t_2(\eta)\in(0,\eta)$ such that $f'(t_1(\eta))\le f(\eta)/\eta\,\le f'(t_2(\eta))$. Setting $F_j=F_{t_1(\eta_j)}$ for a suitable $\eta_j\to 0^+$, we get
\[
  \limsup_{j\to\infty}P(F_j;\Om)\le2\,\ell\,,
\]
where $|F_j|\to 0^+$. Finally, given $\e>0$, we pick $j$ such that $|F_j|<\e$, and construct a competitor for $\psi(\e)$ by adding to $F_j$ a disjoint ball of volume $\e-|F_j|$. In this way, $\psi(\e)\le P(F_j;\Om)+C(n)\,\big(\e-|F_j|\big)^{n/(n+1)}$, and \eqref{psi eps basic bounds} is found by letting $j\to\infty$.

\medskip

Since $\psi(\e) < \infty$, we can now consider a minimizing sequence $\{E_j\}_{j=1}^{\infty}$ for $\psi(\e)$. Given that $P(E_j)\le\H^n(\pa\Om)+\H^n(\Om\cap\pa E_j)\le\H^n(\pa\Om)+\psi(\e)+1$ for $j$ large, and that $|E_j| = \e$ for every $j$, there exist a set of finite perimeter $E\subset\Om$ and a Radon measure $\mu$ in $\Om$ such that, up to extracting subsequences,
\begin{eqnarray}\label{almostthere}
  E_j\to E\quad\mbox{in $L^1_{{\rm loc}}(\Om)$}\,,\quad
  \mu_j=\H^n\llcorner(\Om\cap\pa E_j)\weakstar\mu\quad\mbox{as Radon measures on $\Om$}\,,
\end{eqnarray}
as $j\to\infty$, see e.g. \cite[Section 12.4]{maggiBOOK}. We consider the set, relatively closed in $\Om$, defined by
\[
K=\Om \cap \spt\mu=\big\{x\in\Om:\mu(B_r(x))>0\quad\forall r>0\big\}\,,
\]
and claim that
\begin{eqnarray}\label{proof that KE belongs to KK 1}
  \mbox{$K$ is $\C$-spanning $W$}\,,\qquad \Om\cap\pa^*E\subset K\,.
\end{eqnarray}
Indeed, the first claim in \eqref{proof that KE belongs to KK 1} is obtained by applying Lemma \ref{statement K spans} to $K_j=\Om\cap\pa E_j$; and if $x\in\Om\cap \pa^*E$ and $B_r(x)\subset\Om$, then
\[
0<P(E;B_r(x))\le\liminf_{j\to\infty}P(E_j;B_r(x))\le\liminf_{j\to\infty}\mu_j(B_r(x))\le\mu(\cl(B_{r}(x)))
\]
so that $x\in K$. Notice that, at this stage, we still do not know if $(K,E)\in\KK$: we still need to show that $K$ is $\H^n$-rectifiable and, possibly up to Lebesgue negligible modifications, that $E$ is open with $\Om\cap\cl(\pa^*E)=\Om\cap\pa E$. Moreover, we just have $|E|\le\e$ (possible volume loss at infinity), and we know nothing about the structure of $\mu$.

\bigskip

\noindent {\it Step two}: We show the existence of $\tau>0$ such that for every $E_j$ there exist $x_j^1,x_j^2\in\R^{n+1}$ such that $\{\cl(B_{2\tau}(x_j^1)),\cl(B_{2\tau}(x_j^2)),W\}$ is disjoint and
\begin{equation}\label{lsc good balls}
  |E_j\cap B_\tau(x_j^1)|=\k_1\,,\qquad  |E_j\cap B_\tau(x_j^2)|=\k_2\,,
\end{equation}
for some $\k_1,\k_2\in(0,|B_\tau|/2]$ depending on $n$, $\tau$, $\e$ and $\ell$ only. With $\tau_0$ as in \eqref{hp on W and C 0}, for $M\in\N \setminus \{0\}$ to be chosen later on, and by compactness of $W$, we can pick $\tau>0$ so that
\begin{equation}
  \label{lsc tau req}
  (M+1)\,\tau<\tau_0\,,\qquad |B_{M\,\tau}|<\frac\e4\,,\qquad |I_{(M+1)\tau}(W)\setminus W|<\frac\e2\,.
\end{equation}
The value $\s$ in Lemma \ref{statement nucleation} corresponding to $E_j$ and $T=I_{M\,\tau}(W)$ is given by
\[
\min\Big\{\frac{|E_j\setminus I_\tau(T)|}{\tau\,P(E_j;\R^{n+1}\setminus T)},\frac{\xi(n)}{n+1}\Big\}
\ge\min\Big\{\frac{\e/2}{\tau\,(\psi(\e)+1)},\frac{\xi(n)}{n+1}\Big\}>0\,,
\]
since $|E_j\setminus I_\tau(T)|\ge\e/2$ by \eqref{lsc tau req}, and since $P(E_j;\Om)\le\psi(\e)+1$. Therefore, setting
\[
\s_1 = \min\Big\{\frac{\e/2}{\tau\,(\psi(\e)+1)},\frac{\xi(n)}{n+1}\Big\}\,,
\]
an application of Lemma \ref{statement nucleation} yields $y_j\in \R^{n+1}\setminus I_{(M+1)\tau}(W)$ such that
\begin{eqnarray*}
|E_j\cap B_\tau(y_j)|\ge\min\Big\{\Big(\frac{\s_1}{2\xi(n)}\Big)^{n+1}\tau^{n+1}\,,\frac{|B_{\tau}|}2\Big\}=\k_1\,,
\end{eqnarray*}
so that $\k_1\in(0,|B_\tau|/2]$ depends on $n$, $\ell$, $\e$, and $\tau$ only (observe that this is a consequence of \eqref{psi eps basic bounds}). The continuous map $x\mapsto|E_j\cap B_\tau(x)|$ takes a value larger than $\k_1$ at $y_j\in \R^{n+1}\setminus I_{(M+1)\,\tau}(W)$; at the same time, by \eqref{hp on W and C 0}, $\R^{n+1}\setminus I_{(M+1)\,\tau}(W)$ is open and connected, therefore it is pathwise connected \cite[Corollary 5.6]{topa}, and $|E_j\cap B_\tau(x)|\to 0$ as $|x|\to\infty$ in $\R^{n+1}\setminus I_{(M+1)\,\tau}(W)$. Therefore we can find  $x_j^1\in \R^{n+1}\setminus I_{(M+1)\,\tau}(W)$ such that the first identity in \eqref{lsc good balls} holds and $\{\cl(B_{(M+1)\,\tau}(x_j^1)),W\}$ is disjoint. Setting $B=\cl(B_{(M-2)\tau}(x_j^1))$, the value $\s$ in Lemma \ref{statement nucleation} corresponding to $E_j$ and $T=I_{\tau}(W)\cup B$ is given by
\[
\min\Big\{\frac{|E_j\setminus I_\tau(T)|}{\tau\,P(E_j;\R^{n+1}\setminus T)},\frac{\xi(n)}{n+1}\Big\}
\ge\min\Big\{\frac{\e/4}{\tau\,(\psi(\e)+1)},\frac{\xi(n)}{n+1}\Big\}>0\,,
\]
so that, after setting
\[
\s_2 = \min\Big\{\frac{\e/4}{\tau\,(\psi(\e)+1)},\frac{\xi(n)}{n+1}\Big\}\,,
\]
 we can find $z_j\in\R^{n+1}\setminus( I_{2\tau}(W)\cup \cl(B_{(M-1)\,\tau}(x_j^1)))$ such that
\begin{eqnarray*}
|E_j\cap B_\tau(z_j)|\ge
\min\Big\{\Big(\frac{\s_2}{2\xi(n)}\Big)^{n+1}\tau^{n+1}\,,\frac{|B_{\tau}|}2\Big\}=\k_2\,,
\end{eqnarray*}
with $\k_2\in(0,|B_\tau|/2]$ depending on $n$, $\ell$, $\e$, and $\tau$ only.
Since $I_{2\tau}(W)$ and $\cl(B_{(M-1)\tau}(x_j^1))$ are disjoint and since $\R^{n+1}\setminus I_{2\tau}(W)$ is pathwise connected by \eqref{hp on W and C 0}, we easily check that $\R^{n+1}\setminus( I_{2\tau}(W)\cup \cl(B_{(M-1)\tau}(x_j^1)))$ is pathwise connected. By continuity,
\begin{equation}
  \label{lsc automatic}
\exists \,  x_j^2\in\R^{n+1}\setminus( I_{2\tau}(W)\cup \cl(B_{(M-1)\,\tau}(x_j^1)))
\end{equation}
such that the second identity in \eqref{lsc good balls} holds. Finally, \eqref{lsc automatic} implies that the family of sets $\{\cl(B_{(M-3)\tau}(x_j^1)),\cl(B_{2\tau}(x_j^2)),W\}$ is disjoint. We pick $M=5$ to conclude the proof.

\bigskip

\noindent {\it Step three}: In this step we show that \eqref{minimizing sequence} can be modified to allow for comparison with local variations $F_j$ of $E_j$ that do not necessarily preserve the volume constraint. More precisely, we prove the existence of positive constants $r_*$ and $C_*$ (depending on the whole sequence $\{E_j\}_j$, and thus uniform in $j$) such that if $x\in\Om$, $r<r_*$ and $\{F_j\}_j$ is an {\bf admissible local variation of $\{E_j\}_j$ in $B_r(x)$}, in the sense that
\begin{equation}
  \label{admissible variation}
  F_j\in\E\,,\qquad F_j\Delta E_j\cc B_r(x)\,,
\qquad
\mbox{$\Om\cap\pa F_j$ is $\C$-spanning $W$}\,,
\end{equation}
(notice that we do not require $B_r(x)\subset\Om$), then
\begin{equation}
  \label{volume fixing variation inequality}
  \H^n(\Om\cap\pa E_j)\le \H^n(\Om\cap\pa F_j)+C_*\,\Big||E_j|-|F_j|\Big|+\frac1j\,.
\end{equation}
We first claim that if $B_j \subset \Om$ is a ball with $\dist(B_j,B_r(x))>0$, $\zeta:\Om\to\Om$ is a diffeomorphism with $\zeta(B_j)\subset B_j$ and $\{\zeta\ne\id\}\cc B_j$, and if
\begin{equation}
  \label{form of Gj}
  G_j=\Big(F_j\cap B_r(x)\Big)\cup\Big(\zeta(E_j)\cap B_j\Big)\cup \Big(E_j\setminus (B_j\cup B_r(x))\Big)
\end{equation}
then $G_j\in\E$ and $\Om\cap\pa G_j$ is $\C$-spanning $W$. The fact that $G_j$ is open is obvious since $G_j$ is equal to $E_j$ in a neighborhood of $\Om\setminus (B_r(x)\cup B_j)$, to $F_j$ in a neighborhood of $B_r(x)$, and to $\zeta(E_j)$ in a neighborhood of $B_j$, where $E_j$, $F_j$ and $\zeta(E_j)$ are open, and where $\dist(B_j,B_r(x))>0$; this also shows that $\pa G_j$ is equal to $\pa E_j$ in a neighborhood of $\Om\setminus(B_r(x)\cup B_j)$, to $\pa F_j$ in a neighborhood of $B_r(x)$, and to $\pa\zeta(E_j)=\zeta(\pa E_j)$ in a neighborhood of $B_j$, so that $\Om\cap\pa G_j$ is $\H^n$-rectifiable and, thanks to \eqref{admissible variation} and Lemma \ref{statement spanning is close by Lipschitz maps}, that $\Om\cap\pa G_j$ is $\C$-spanning $W$. Having proved the claim, we only have to construct sets $G_j$ as in \eqref{form of Gj} and such that
\begin{equation}
  \label{lsc Gj properties 1}
  |G_j|=\e\,,\qquad
  \H^n(\Om\cap\pa G_j)\le\H^n(\Om\cap\pa F_j)+C_*\,\big||E_j|-|F_j|\big|\,,
\end{equation}
in order to deduce \eqref{volume fixing variation inequality} from \eqref{minimizing sequence}. To this aim, let $\{x_j^k\}_{k=1,2}$ be as  in step two: the sets $\{(E_j-x_j^k)\cap B_\tau(0)\}_j$ are bounded in $B_\tau(0)$, and have uniformly bounded perimeters, so that, up to extracting a subsequence, for each $k=1,2$ there exists a set of finite perimeter $E_*^k\subset B_\tau(0)$ such that $(E_j-x_j^k)\cap B_\tau(0)\to E_*^k$ in $L^1(\R^{n+1})$. The crucial point is that, by \eqref{lsc good balls} and since $\k_k\in(0,|B_\tau(0)|/2]$, we must have
\[
B_\tau(0)\cap\pa^*E_*^k\ne\emptyset\,.
\]
Hence, by arguing as in \cite[Section 29.6]{maggiBOOK}, we can find positive constants $C_*'$ and $\e_*$ such that for every set of finite perimeter $E'\subset B_\tau(0)$ with
\[
|E'\Delta E_*^k|<\e_*\,,
\]
there exists a $C^1$-map $\Phi_k:(-\e_*,\e_*)\times B_\tau(0)\to B_\tau(0)$ such that, for each $|v|<\e_*$: (i) $\Phi_k(v,\cdot)$ is a diffeomorphism with $\{\Phi_k(v,\cdot)\ne\Id\}\cc B_\tau(0)$; (ii) $|\Phi_k(v,E')|=|E'|+v$; (iii) if $\Sigma$ is an $\H^n$-rectifiable set in $B_\tau(0)$, then
\[
\Big|\H^n(\Phi_k(v,\Sigma))-\H^n(\Sigma)\Big|\le C_*'\,\H^n(\Sigma)\,|v|\,.
\]
By taking $E'=(E_j-x_j^k)\cap B_\tau(0)$ (for $j$ large enough), by composing the maps $\Phi_k$ with a translation by $x_j^k$, and then by extending the resulting maps as the identity map outside of $B_\tau(x_j^k)$, we prove the existence of $C^1$-maps $\Psi_k:(-\e_*,\e_*)\times \R^{n+1}\to\R^{n+1}$ such that, for each $|v|<\e_*$: (i) $\Psi_k(v,\cdot)$ is a diffeomorphism with $\{\Psi_k(v,\cdot)\ne\Id\}\cc B_\tau(x_j^k)$; (ii) $|\Psi_k(v,E_j)|=|E_j|+v$; (iii) if $\Sigma$ is an $\H^n$-rectifiable set in $\R^{n+1}$, then
\[
\Big|\H^n(\Psi_k(v,\Sigma))-\H^n(\Sigma)\Big|\le C_*'\,\H^n(\Sigma)\,|v|\,.
\]
Finally, we set
\[
r_*=\min\Big\{\tau,\Big(\frac{\e_*}{2\,\om_{n+1}}\Big)^{1/(n+1)}\Big\}\,,\qquad B_j=B_{\tau}(x_j^{k(j)})
\]
where $k=k(j)\in\{1,2\}$ is selected so that $\dist(B_r(x),B_j)>0$ (this is possible because $r_x
*\le\tau$ and $\{\cl(B_{2\tau}(x_j^1)),\cl(B_{2\tau}(x_j^2))\}$ are disjoint). We finally define $G_j$ by \eqref{form of Gj} with
\[
\zeta= \Psi_{k(j)}(v_j,\cdot)\,,\qquad v_j=|E_j\cap B_r(x)|-|F_j\cap B_r(x)|\,,
\]
as we are allowed to do since $E_j\Delta F_j\cc B_r(x)$ and thus $|v_j|\le \om_{n+1}\,r_*^{n+1}\le\e_*/2$. To prove \eqref{lsc Gj properties 1}: first, we have $G_j\Delta F_j\cc\Om\setminus\cl(B_r(x))$, while property (ii) of $\Psi_{k(j)}$ gives
\begin{eqnarray*}
|G_j|-|E_j|&=&|\Psi_{k(j)}(v_j,E_j)\cap B_j|+|F_j\cap B_r(x)|-|E_j\cap B_j|-|E_j\cap B_r(x)|
  \\
  &=&|\Psi_{k(j)}(v_j,E_j)\cap B_j|-v_j-|E_j\cap B_j|=0\,;
\end{eqnarray*}
second, property (iii) applied to the $\H^n$-rectifiable set $\S=B_j\cap\pa E_j$ gives
\begin{eqnarray*}
  &&\H^n(\Om\cap\pa G_j)-\H^n(\Om\cap\pa F_j)
  \\
  &=&\H^n\Big(\Psi_{k(j)}(v_j,B_j\cap\pa E_j)\Big)-\H^n(B_j\cap\pa E_j)\le C_*'\,|v_j|\,
  \H^n(B_j\cap\pa E_j)
\end{eqnarray*}
so that \eqref{lsc Gj properties 1} follows by taking $C_*=C_*'\,(\psi(\e)+1)$.

\bigskip

\noindent {\it Step four}: In this step we apply \eqref{volume fixing variation inequality} to the cup and cone competitors constructed in section \ref{section five competitors} and show that $K = \Om \cap \spt\mu$ is relatively compact in $\Om$ and $\H^n$-rectifiable, that $\mu=\theta\,\H^n\llcorner K$ with $\theta\ge1$ on $K$ and $\theta\ge 2$ $\H^n$-a.e. on $K\setminus\pa^*E$, and, finally, that $(K,E)\in\KK$. To this end, pick $x\in K$, set $d(x)=\dist(x,W)>0$, and let
\[
f_j(r)=\mu_j(B_r(x))=\H^n(B_r(x)\cap\pa E_j)\,,\qquad f(r)=\mu(B_r(x))\,,\qquad \mbox{for every $r\in(0,d(x))$}\,.
\]
Denoting by $Df$ the distributional derivative of $f$, and by $f'$ its classical derivative, the coarea formula (see \cite[Step one, proof of Theorem 2]{DLGM} and \cite[Theorem 2.9.19]{FedererBOOK}) gives
\begin{eqnarray}\label{fj f g}
  \mbox{$f_j\to f$ a.e. on $(0,d(x))$}\,,\quad
  Df_j\ge f_j'\,dr\,,\quad Df\ge f'\,dr\,,\quad f'\ge g=\liminf_{j\to\infty}f_j'\,,
\\
\label{lb fj'}
  f_j'(r)\ge\H^{n-1}(\pa B_r(x)\cap \pa E_j)\qquad\mbox{$\forall j$ and for a.e. $r\in(0,d(x))$}\,.\hspace{1cm}
\end{eqnarray}
Now let $\eta\in(0,r/2)$, let $A_j$ denote an $\H^n$-maximal open connected component of $\pa B_r(x)\setminus\pa E_j$, and let $F_j$ be the cup competitor defined by $E_j$ and $A_j$ as in Lemma \ref{lemma cup competitor first kind}. More precisely, when $E_j \cap A_j = \emptyset$, we let $\{\eta^j_k\}_{k=1}^\infty$ be the decreasing sequence with $\lim_{k \to \infty} \eta^j_k = 0$ defined in step two of the proof of Lemma \ref{lemma cup competitor first kind}, and setting, for $\eta^j_k$ such that $\eta \in \left( \eta^j_{k+1}, \eta^j_k \right]$,
\begin{eqnarray*}
 Y_j&=&\pa B_r(x)\setminus\big(\cl(E_j\cap\pa B_r(x))\cup\cl(A_j)\big)\,, \\
 S_j &=& \pa E_j \cap \cl(A_j) \setminus \left( \cl (E_j \cap \pa B_r (x)) \cup \cl (Y_j) \right)\,, \\
U_j &=& \pa B_r (x) \cap \{{\rm d}_{S_j} < \eta^j_k\}\,,
\end{eqnarray*}
we define
\begin{equation}
    \label{cup competitor first case}
    F_j=\big(E_j\setminus\cl(B_r(x))\big)\cup\,N_{\eta^j_k}(Z_j)\,, \qquad  Z_j = Y_j \cup \left( U_j \setminus \cl (E_j \cap \pa B_r (x)) \right)\,.
    \end{equation}
When $A_j \subset E_j$, instead, we define
    \begin{equation}     \label{cup competitor second case}
    F_j=\big(E_j\cup B_r(x)\big)\setminus\cl\big(N_\eta(Y_j)\big)\,, \qquad Y_j=(E_j\cap\pa B_r(x))\setminus\cl(A_j)\,;
\end{equation}
  see Figure \ref{fig cup00}. In both cases, $\{F_j\}_j$ is an admissible local variation of $\{E_j\}_j$ in $B_{r'}(x)$ for some $r'>r$, and by \eqref{cup area totale}, for a.e. $r<d(x)$ we have
  \[
  \limsup_{\eta\to 0^+}\H^n(\Om\cap\pa F_j)\le \H^n(\pa E_j\setminus B_r(x))+2\,\H^n(\pa B_r(x)\setminus A_j)
  \]
  so that, by \eqref{volume fixing variation inequality}, for a.e. $r<\min\{d(x),r_*\}$, we have
  \begin{equation}\label{cup analysis 1}
  f_j(r)\le2\,\H^n(\pa B_r(x)\setminus A_j)+C_*\,\limsup_{\eta\to 0^+}\big||E_j|-|F_j|\big|+\frac1j\,.
  \end{equation}
The estimate of $||E_j|-|F_j||$ is different depending on whether $F_j$ is given by \eqref{cup competitor first case} or by \eqref{cup competitor second case}. In both cases we make use of the Euclidean isoperimetric inequality
\[
(n+1)\,|B_1|^{1/(n+1)}\,|U|^{n/(n+1)}\le P(U)\,,\qquad\forall U\subset\R^{n+1}\,,
\]
and we also need the perimeter identities
\begin{equation}
  \label{choice of r 2}
  \begin{split}
    &P(E_j\cap B_r(x))=P(E_j;B_r(x))+\H^n(E_j\cap\pa B_r(x))\,,
    \\
    &P(B_r(x)\setminus E_j)=P(E_j;B_r(x))+\H^n(\pa B_r(x)\setminus E_j)\,,
  \end{split}
\end{equation}
which hold for a.e. $r>0$, with the exceptional set of $r$-values that can be made independent from $j$. We now take $F_j$ as in \eqref{cup competitor first case}: up to further decreasing the value of $r_*$ so to entail $C_*\,r_*/(n+1)\le 1/2$, and assuming that $r<r_*$, we have
\begin{eqnarray}\nonumber
  C_*\,\Big||E_j|-|F_j|\Big|&\le& C_*\,|E_j\cap B_r(x)|+C_*\,c(n)\,r^n\,\eta^j_k
  \\\nonumber
  &\le& C_*\,|B_1|^{1/(n+1)}\,r\,|E_j\cap B_r(x)|^{n/(n+1)}+C_*\,c(n)\,r^n\,\eta^j_k
  \\\nonumber
  &\le& \frac{C_*}{n+1}\,r_*\,P(E_j\cap B_r(x))+C_*\,c(n)\,r^n\,\eta^j_k
  \\\nonumber
  &\le&\frac12\Big\{P(E_j;B_r(x))+\H^n(E_j\cap\pa B_r(x))\Big\}+C_*\,c(n)\,r^n\,\eta^j_k
  \\\label{volume error estimate 1}
  &\le&\frac12\Big\{f_j(r)+\H^n(\pa B_r(x)\setminus A_j)\Big\}+C_*\,c(n)\,r^n\,\eta^j_k\,,
\end{eqnarray}
where in the last inequality we have used $\pa^*E_j\subset\pa E$ and $A_j\cap E_j=\emptyset$ (that is the assumption under which $F_j$ is chosen as in \eqref{cup competitor first case}). If instead we take $F_j$ as in \eqref{cup competitor second case}, then
\begin{eqnarray}\nonumber
  C_*\,\Big||E_j|-|F_j|\Big|&=&C_*\,\Big||E_j\cap B_r(x)|-|F_j\cap B_r(x)|\Big|=C_*\,\Big||B_r(x)\setminus E_j|-|B_r(x)\setminus F_j|\Big|
  \\\nonumber
  &\le&C_*\,|B_1|^{1/(n+1)}\,r\,|B_r(x)\setminus E_j|^{n/(n+1)}+C_*\,|N_\eta(\pa B_r(x)\cap E_j\setminus\cl(A_j))|
  \\\nonumber
  &\le&\frac12\Big\{P(E_j;B_r(x))+\H^n(\pa B_r(x)\setminus E_j)\Big\}+C_*\,c(n)\,r^n\,\eta
  \\\label{volume error estimate 2}
  &\le&\frac12\Big\{f_j(r)+\H^n(\pa B_r(x)\setminus A_j)\Big\}+C_*\,c(n)\,r^n\,\eta\,,
\end{eqnarray}
where in the last inequality we have used $\pa^*E_j\subset\pa E_j$ and $A_j\subset E_j$ (the assumption corresponding to \eqref{cup competitor second case}). By combining \eqref{cup analysis 1} with \eqref{volume error estimate 1} and \eqref{volume error estimate 2}, we conclude that
\begin{equation}
  \label{key inequality}
  \frac{f_j(r)}2\le 3\,\H^n(\pa B_r(x)\setminus A_j)+\frac1j\,,\qquad\mbox{for a.e. $r<\min\{r_*,d(x)\}$}\,.
\end{equation}
By the spherical isoperimetric inequality, Lemma \ref{statement isoperimetry on spheres}, and by \eqref{lb fj'}, for a.e. $r<d(x)$,
\[
\H^n(\pa B_r(x)\setminus A_j)\le C(n)\,\H^{n-1}(\pa B_r(x)\cap \pa E_j)^{n/(n-1)}\le C(n)\,f_j'(r)^{n/(n-1)}\,,
\]
which combined with \eqref{key inequality} and \eqref{fj f g}, allows us to conclude (letting $j\to\infty$), that
\begin{eqnarray}\label{cup analysis 3}
  f(r)\le C(n)\,f'(r)^{n/(n-1)}\,,\qquad\mbox{for a.e. $r<\min\{r_*,d(x)\}$}\,.
\end{eqnarray}
Since $x\in\spt\mu$, $f$ is positive, and thus \eqref{cup analysis 3} implies the existence of $\theta_0(n)>0$ such that
\begin{equation}
  \label{mu lower bound basic}
  \mu(B_r(x))\ge \theta_0\,\om_n\,r^n\qquad\forall x\in K\,,r<r_*\,,B_r(x)\cc\Om\,.
\end{equation}
Since $K=\Om\cap\spt\mu$, by \cite[Theorem 6.9]{mattila} and \eqref{mu lower bound basic} we obtain
\begin{equation}
  \label{mu lb HnK}
  \mu\ge\theta_0\,\H^n\llcorner K\qquad\mbox{on $\Om$}\,.
\end{equation}
As a consequence of $\mu(\Om)<\infty$ and of \eqref{mu lower bound basic} we deduce that $K$ is bounded, thus relatively compact in $\Om$. In turn, $\pa^* E\subset K$ implies the boundedness of $E$. Notice that we have not excluded $|E|<\e$ yet.

To further progress in the analysis of $\mu$, given $\eta\in(0,r/2)$ let use now denote by $F_j$ the set corresponding to $\eta$ constructed in Lemma \ref{lemma cone competitor}, so that, by \eqref{cone competitor area inequality}, for a.e. $r<d(x)$,
\begin{equation}
    \label{cone competitor area inequality proof}
    \limsup_{\eta\to 0^+}\H^n(\Om\cap\pa F_j)\le\H^n(\pa E_j\setminus B_r(x))+\frac{r}n\,\H^{n-1}(\pa E_j\cap\pa B_r(x))\,.
  \end{equation}
Using that $\{F_j\}_j$ is an admissible local variation of $\{E_j\}_j$ in $B_r(x)$, and combining \eqref{volume fixing variation inequality} and \eqref{cone competitor area inequality proof} with  $||E_j|-|F_j||\le C(n)\,r^{n+1}$, we find that
\[
\H^n(B_r(x)\cap\pa E_j)\le \frac{r}n\,f_j'(r)+C_*\,r^{n+1}+\frac1j\,,
\]
so that, as $j\to\infty$, $f(r)\le (r/n)\,f'(r)+C_*\,r^{n+1}$. By combining this last inequality with $Df\ge f'(r)\,dr$ and \eqref{mu lower bound basic} we find that
\begin{eqnarray*}
D\,(e^{\Lambda\,r}f(r)/r^n)&=&\frac{n\,e^{\Lambda\,r}}{r^{n+1}}\Big\{\frac{r}n\,Df+\Big(\frac{r\,\Lambda}n\,f(r)-\,f(r)\Big)\,dr\Big\}
\\
&\ge&
  \frac{n\,e^{\Lambda\,r}}{r^{n+1}}\Big\{f(r)-C_*\,r^{n+1}+\frac{r\,\Lambda}n\,f(r)-\,f(r)\Big\}\,dr
  \\
  &=&
    \frac{n\,e^{\Lambda\,r}}{r^n}\Big\{-C_*\,r^{n}+\frac{\Lambda\,f(r)}n\Big\}\,dr
  \ge
    n\,\,e^{\Lambda\,r}\Big\{-C_*+\frac{\Lambda\,\theta_0\,\om_n}n\Big\}\,dr
\end{eqnarray*}
so that, setting $\Lambda\ge n\,C_*/(\theta_0\om_n)$, we have proved
\begin{equation}
  \label{monotonicity}
  e^{\Lambda\,r}\,\frac{\mu(B_r(x))}{r^n}\qquad\mbox{is non-decreasing on $r<\min\{r_*,d(x)\}$}\,.
\end{equation}
By \eqref{monotonicity} and \eqref{mu lb HnK} we find that
\[
\theta(x)=\lim_{r\to 0^+}\frac{\mu(B_r(x))}{\om_n\,r^n}\quad\mbox{exists in $(0,\infty)$ for every $x\in K$}\,.
\]
By Preiss' theorem, $\mu=\theta\,\H^n\llcorner K^*$ for a Borel function $\theta$ and a countably $\H^n$-rectifiable set $K^*\subset\Om$. Since $K=\Om \cap \spt\mu$, we have $\H^n(K^*\setminus K)=0$, while \eqref{mu lb HnK} gives $\H^n(K\setminus K^*)=0$. Thus $K$ is countably $\H^n$-rectifiable and $\mu=\theta\,\H^n\llcorner K$. Moreover, $\theta$ is upper semicontinuous on $K$ thanks to \eqref{monotonicity}. Finally, consider the open set
\[
E^*=\big\{x\in\Om:\mbox{$\exists r>0$ s.t. $|B_r(x)|=|E\cap B_r(x)|$}\big\}\,.
\]
The topological boundary of $E^*$ is equal to
\[
\pa E^*=\big\{x\in\cl(\Om):0<|E\cap B_r(x)|<|B_r(x)|\quad\forall r>0\big\}\,,
\]
so that $\Om\cap\cl(\pa^*E)=\Om\cap\pa E^*$ by \cite[Proposition 12.19]{maggiBOOK}. Clearly $E^*\subset E^{(1)}$: moreover, if $x\in E^{(1)}\setminus E^*$, then $0<|E\cap B_r(x)|<|B_r(x)|$ for every $r>0$, and thus $x\in\pa E^*$. In particular,
\[
\Om\cap(E^{(1)}\setminus E^*)\subset\Om\cap\pa E^*=\Om\cap\cl(\pa^*E)\subset K\,,
\]
where $K$ is $\H^n$-rectifiable, and thus Lebesgue negligible. Since $\H^n(\pa\Om)<\infty$, we have proved $\H^n(E^{(1)}\setminus E^*)<\infty$, and thus $|E^{(1)}\Delta E^*|=0$. By the Lebesgue's points theorem, $E^*$ is equivalent to $E$, so that $\pa^*E=\pa^*E^*$. Replacing $E$ with $E^*$ we find $(K,E)\in\KK$. Finally, the lower bounds $\theta\ge 1$ $\H^n$-a.e. on $K$ and $\theta\ge2$ $\H^n$-a.e. on $K\setminus\pa^*E$ follow by applying Lemma \ref{lemma llb} with $\Om'=\Om$: notice indeed that assumptions \eqref{llb1} and \eqref{llb2} in Lemma \ref{lemma llb} hold by \eqref{mu lower bound basic} and by \eqref{key inequality}.

\medskip

\noindent {\it Step five}: We show that $\theta(x)\le1$ at every $x\in\Om\cap\pa^*E$ and that $\theta(x)\le2$ at every $x\in K\cap (E^{(0)}\cup E^{(1)})$ such that $K$ admits an approximate tangent plane at $x$ (thus, that $\theta\le 2$ $\H^n$-a.e. on $K\setminus\pa^*E$). We choose $\nu(x)\in\SS^n$ such that $T_xK=\nu(x)^\perp$ (notice that, necessarily, $\nu(x)=\nu_E(x)$ or $\nu(x)=-\nu_E(x)$ when, in addition, $x\in\pa^*E$), and let $B_{2\,r}(x)\cc\Om$. For $\tau\in(0,1)$ and $\s\in(0,\tau)$ we set
\begin{eqnarray}\label{all the young dudes}
S_{\tau,r}&=&\big\{y\in B_r(x):|(y-x)\cdot\nu(x)|<\tau\,r\big\}\,,
\\\nonumber
V_{\s,r}&=&\big\{y\in B_r(x):|(y-x)\cdot\nu(x)|<\s\,|y-x|\big\}\,\subset\, S_{\s,r}\,\subset\, S_{\tau,r}\,,
\\\nonumber
W_{\tau,\s,r}^\pm&=&\big(S_{\tau,r}\setminus\cl(V_{\s,r})\big)\cap\{y:(y-x)\cdot\nu_E(x)\gtrless0\}\,,
\\\nonumber
\Gamma_{\tau,\s,r}^\pm&=&\pa S_{\tau,r}\cap\pa W_{\tau,\s,r}^\pm\,,
\end{eqnarray}
that are depicted  in
\begin{figure}
  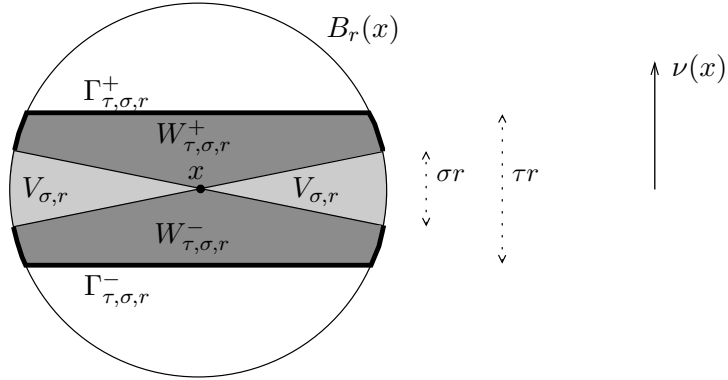\caption{{\small The sets defined in \eqref{all the young dudes}. Here $\s<\tau<1$, and $S_{\tau,r}$ is decomposed into a central open cone $V_{\s,r}$ of small amplitude $\s$, the upper and lower open regions $W_{\tau,\s,r}^\pm$, and the closed cone $S_{\tau,r}\cap\pa V_{\s,r}$. For $r\le r_0(\s,x)$, $B_r(x)\cap K$ lies inside $V_{\s,r}$ by approximate differentiability of $K$ at $x$ and by the density estimate \eqref{mu lower bound basic}. When $x\in\pa^*E$, if we choose $\nu(x)=\nu_E(x)$, then the divergence theorem implies that $E$ fills up the whole $W_{\tau,\s,r}^-$, and leaves empty $W_{\tau,\s,r}^+$.
  }}\label{fig step8geometry}
\end{figure}
Figure \ref{fig step8geometry}. By \eqref{mu lower bound basic} and since $\H^n\llcorner(K-x)/\rho\weakstar\H^n\llcorner T_xK$ as $\rho\to 0^+$,
the approximate tangent plane $T_xK$ is a classical tangent plane, and thus there exists $r_0=r_0(\s,x)>0$ such that $K\cap B_r(x)\subset S_{\s,r}$ for every $r<r_0$, or, equivalently,
\begin{equation}\label{theta 1 bordo nw}
  K\cap B_{r_0}(x)\subset V_{\s,r_0}\cup\{x\}\,.
\end{equation}
In particular
\begin{equation}
\label{theta 1 fine 00}
  \mu(S_{\tau,r})=\mu(B_r(x))\,,\qquad\forall r<r_0\,.
\end{equation}
We also notice that for a.e. value of $r$ we have
\begin{equation}\label{slab null}
  \mbox{$\pa S_{\tau,r}\cap\pa E_j$ is $\H^{n-1}$-rectifiable}\qquad\forall j\,.
\end{equation}
We now introduce the family of open sets
\begin{eqnarray}\nonumber
\A_{r,j}^{{\rm out}}&=&\Big\{A\subset \pa S_{\tau,r}:\mbox{$A$ is an open connected component}
 \\\nonumber
 &&\hspace{4cm}\mbox{of $\pa S_{\tau,r}\setminus\pa E_j$ and $A$ is {\bf disjoint} from $E_j$}\Big\}\,,
\\
\nonumber
\A_{r,j}^{{\rm in}}&=&\Big\{A\subset \pa S_{\tau,r}:\mbox{$A$ is an open connected component}
 \\\nonumber
 &&\hspace{4cm}\mbox{of $\pa S_{\tau,r}\setminus\pa E_j$ and $A$ is {\bf contained} in $E_j$}\Big\}\,,
\end{eqnarray}
and denote by $A_{r,j}^{{\rm out}}$ and $A_{r,j}^{{\rm in}}$ $\H^n$-maximal elements of $\A_{r,j}^{{\rm out}}$ and $\A_{r,j}^{{\rm in}}$ respectively. Finally, given $\eta\in(0,r/2)$, we let $F_j^\star$ be the slab competitor defined by $E_j$, $A_{r,j}^\star$ and $\tau$ in $B_{2r}(x)$ for $\star\in\{{\rm out},{\rm in}\}$ as in Lemma \ref{lemma slab competitor}: accordingly, $F_j^\star\in\E$, $\Om\cap\pa F_j^\star$ is $\C$-spanning $W$,
\begin{eqnarray}
  \label{new slabs 1}
  &&F_j^\star\setminus\cl(S_{\tau,r})=E_j\setminus\cl(S_{\tau,r})\,,
  \\
  \label{new slabs 2}
 && \lim_{\eta \to 0^+} \H^n \left( (\pa S_{\tau,r}\cap\pa F_j^\star) \, \Delta \, (\pa S_{\tau,r}\setminus A_{r,j}^\star) \right)  = 0\,,
\end{eqnarray}
and
\begin{eqnarray}
  \label{new slabs 3}
  \limsup_{\eta\to 0^+}\H^n(S_{\tau,r}\cap\pa F_j^\star)\le C(n,\tau)\,\left\{
  \begin{split}
    &\H^n\big(\pa S_{\tau,r}\setminus(A_{r,j}^{{\rm out}}\cup E_j)\big)\,,\hspace{1cm}\mbox{if $\star={\rm out}$}\,,
    \\
    &\H^n\big((E_j\cap\pa S_{\tau,r})\setminus A_{r,j}^{{\rm in}}\big)\,,\hspace{1.2cm}\mbox{if $\star={\rm in}$}\,;
  \end{split}
  \right .
\end{eqnarray}
see \eqref{slab competitor exterior}, \eqref{slab competitor buccia}, \eqref{area of slab competitors first 0} and \eqref{area of slab competitors second 0}. By \eqref{volume fixing variation inequality}, $\H^n(\pa S_{\tau,r}\cap\pa E_j)=0$ and \eqref{new slabs 1},
\[
 \H^n(S_{\tau,r}\cap\pa E_j)\le\H^n(\cl(S_{\tau,r})\cap\pa F_j^\star)+C_*\,c(n)\,r^{n+1}+\frac1j\,,\qquad\forall\star\in\{{\rm out},{\rm in}\}\,.
\]
By \eqref{new slabs 2} and \eqref{new slabs 3}, taking the limit first as $\eta\to 0^+$ and then as $j\to\infty$, and by taking also into account that $\mu_j\weakstar\mu$ and that \eqref{theta 1 fine 00} holds, we find, in the case $\star={\rm out}$, that
 \begin{eqnarray}\label{area of slab competitors first proof}
 \mu(B_r(x))&\le&\limsup_{j\to\infty}\H^n(E_j\cap\pa S_{\tau,r})
 \\\nonumber
 &&+C(n,\tau)\,\limsup_{j\to\infty}\H^n\big(\pa S_{\tau,r}\setminus(A_{r,j}^{{\rm out}}\cup E_j)\big)+C_*\,c(n)\,r^{n+1}\,,
 \end{eqnarray}
 and, in the case $\star={\rm in}$, that
  \begin{eqnarray}\label{area of slab competitors second proof}
 \mu(B_r(x))&\le&\limsup_{j\to\infty}\H^n(\pa S_{\tau,r}\setminus E_j)
 \\\nonumber
 &&+C(n,\tau)\,\limsup_{j\to\infty}\H^n\big((E_j\cap\pa S_{\tau,r})\setminus A_{r,j}^{{\rm in}}\big)+C_*\,c(n)\,r^{n+1}\,.
 \end{eqnarray}
 We now discuss the cases $x\in\pa^*E$, $x\in K\cap E^{(0)}$ and $x\in K\cap E^{(1)}$ separately.

 \medskip

 \noindent {\it The case $x\in\pa^*E$}: We claim that, in this case, for every $\s\in(0,\tau)$ and for a.e. $r<r_0(\s,x)$,
\begin{eqnarray}
  \label{theta 1 fine 0}
  \limsup_{j\to\infty}\H^n\Big(\pa S_{\tau,r}\setminus \big(A_{r,j}^{{\rm out}}\cup E_j\big)\Big)&\le&C(n)\,\s\,r^n\,,
  \\ \label{theta 1 fine}
  \limsup_{j\to\infty}\Big|\H^n\big(E_j\cap\pa S_{\tau,r}\big)-\om_n\,r^n\Big|&\le& C(n)\,\tau\,r^n\,;
\end{eqnarray}
see
\begin{figure}
  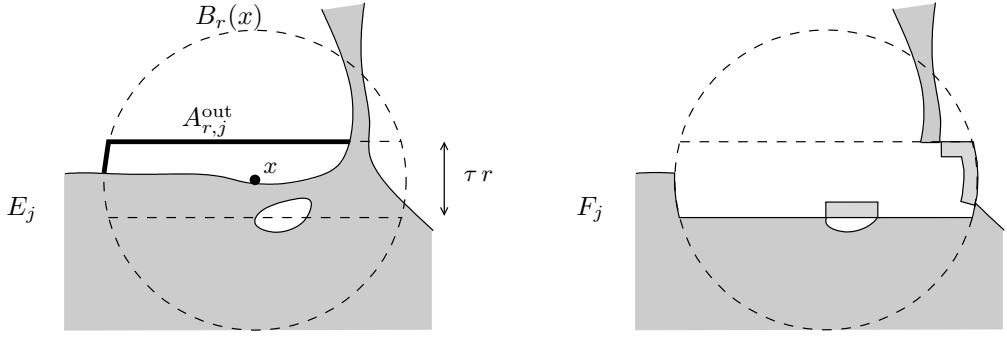\caption{{\small The slab competitor $F_j^{{\rm out}}$ is used in proving that $\theta(x)\le 1$. The fact that $x\in\pa^*E$ is used to show that $E_j\cap\pa S_{\tau,r}$ consists of a large connected component whose area is close to $\om_n\,r^n$ up to a ${\rm o}(r^n)$ error as $r\to 0^+$.}}\label{fig slab1}
\end{figure}
Figure \ref{fig slab1}. We notice that \eqref{theta 1 fine 0} and \eqref{theta 1 fine} combined with \eqref{area of slab competitors first proof} imply
\[
\frac{\mu(B_r(x))}{r^n}\le \om_n+C(n)\,\tau+ C(n,\tau)\,\s+C_*\,c(n)\,r\,,\qquad\mbox{for a.e. $r<r_0$}\,,
\]
which gives $\theta(x)\le 1$ by letting, in the order, $r\to 0^+$, $\s\to 0^+$ and then $\tau\to 0^+$. We now prove \eqref{theta 1 fine 0} and \eqref{theta 1 fine}. Since $x\in\pa^*E$, we can set $\nu(x)=\nu_E(x)$. As $\nu_E(x)$ is the outer normal to $E$, by $\pa^*E\subset K$, \eqref{theta 1 bordo nw} and the divergence theorem, we obtain
\[
|W_{\tau,\s,r_0}^-\setminus E|=|W_{\tau,\s,r_0}^+\cap E|=0\,.
\]
By $|W_{\tau,\s,r_0}^-\setminus E|=0$, the coarea formula and Fatou's lemma, we deduce
\begin{eqnarray*}
  0&=&\lim_{j\to\infty}|W_{\tau,\s,r_0}^-\setminus E_j|=\lim_{j\to\infty}\int_0^{r_0}\H^n\Big(\pa S_{\tau,r}\cap \big(W_{\tau,\s,r_0}^-\setminus E_j\big)\Big)\,dr
  \\
  &\ge&\int_0^{r_0}
  \liminf_{j\to\infty}\H^n\Big(\Gamma_{\tau,\s,r}^-\setminus E_j\Big)\,dr\,,
\end{eqnarray*}
and by arguing similarly with $|W_{\tau,\s,r_0}^+\cap E|=0$ we conclude that, for a.e. $r<r_0$,
\begin{eqnarray}
  \label{theta 1 bordo nw UP}
  &&\lim_{j\to\infty}\H^n\big(\Gamma_{\tau,\s,r}^+\cap E_j\big)=0\,,
  \\
  \label{theta 1 bordo nw DOWN}
  &&\lim_{j\to\infty}\H^n\big(\Gamma_{\tau,\s,r}^-\setminus E_j\big)=0\,.
\end{eqnarray}
By \eqref{theta 1 bordo nw UP}, \eqref{theta 1 bordo nw DOWN}, and since
\begin{equation}
  \label{slab decomp}
  \pa S_{\tau,r}=\Gamma_{\tau,\s,r}^+\cup\Gamma_{\tau,\s,r}^-\cup\big(\pa S_{\tau,r}\cap\pa S_{\s,r}\big)
\end{equation}
we find that, as $j\to\infty$,
\begin{eqnarray*}
  \big|\H^n(\pa S_{\tau,r}\cap E_j)-\om_n\,r^n\big|&\le&\H^n(\pa S_{\tau,r}\cap\pa S_{\s,r})
  +\big|\H^n(\Gamma_{\tau,\s,r}^-\cap E_j)-\om_n\,r^n\big|+{\rm o}(1)
  \\
  &\le&C(n)\,\s\,r^n+\big|\H^n(\Gamma_{\tau,\s,r}^-)-\om_n\,r^n\big|+{\rm o}(1)
  \\
  &\le&C(n)\,\tau\,r^n+{\rm o}(1)\,,
\end{eqnarray*}
that is \eqref{theta 1 fine}. At the same time, again by \eqref{theta 1 bordo nw} and by the coarea formula, assuming without loss of generality that $r_0=r_0(\s,x)$ also satisfies $\H^n(K\cap\pa B_{r_0}(x))=0$ in addition to \eqref{theta 1 bordo nw}, we get
\begin{eqnarray}\nonumber
  0&=&\mu(K\cap \cl(B_{r_0}(x))\setminus V_{\s,r_0})=\lim_{j\to\infty}\H^n\big(B_{r_0}(x)\cap\pa E_j\setminus V_{\s,r_0}\big)
  \\\nonumber
  &\ge&\lim_{j\to\infty}\H^n\big(S_{\tau,r_0}\cap\pa E_j\setminus V_{\s,r_0}\big)
  \\\nonumber
  &\ge&\lim_{j\to\infty}\int_0^{r_0}\,\H^{n-1}\big(\pa S_{\tau,r}\cap\pa E_j\setminus V_{\s,r_0}\big)\,dr\,,
\end{eqnarray}
that is
\begin{equation}
  \label{theta 1 bordo nw 1}
  \lim_{j\to\infty}\H^{n-1}\big(\pa S_{\tau,r}\cap\pa E_j\setminus V_{\s,r_0}\big)=0\qquad\mbox{for a.e. $r<r_0$}\,.
\end{equation}
Notice that \eqref{theta 1 bordo nw 1} implies in particular that
\begin{equation}
  \label{theta 1 bordo nw 2}
  \lim_{j\to\infty}\H^{n-1}\big(\Gamma_{\tau,\s,r}^+\cap\pa E_j\big)=0\qquad\mbox{for a.e. $r<r_0$}\,.
\end{equation}
Since $\Gamma_{\tau,\s,r}^+$ is a bi-Lipschitz image of a hemisphere, by Lemma \ref{statement isoperimetry on spheres},
\begin{equation}
  \label{isoperimetric on half cylinder}
  \H^{n-1}(\Gamma_{\tau,\s,r}^+\cap J)^{n/(n-1)}\ge c(n,\tau,\s)\,\H^n(\Gamma_{\tau,\s,r}^+\setminus A)\,,
\end{equation}
whenever $J$ is relatively closed in $\Gamma_{\tau,\s,r}^+$, and $A$ is an $\H^n$-maximal connected component of $\Gamma_{\tau,\s,r}^+\setminus J$. By \eqref{theta 1 bordo nw 2} and \eqref{isoperimetric on half cylinder} we find that, if
\[
\mbox{$A_{r,j}^+$ is a maximal $\H^n$-component of $\Gamma_{\tau,\s,r}^+\setminus\pa E_j$}\,,
\]
then
\begin{equation}
  \label{theta 1 bordo nw 3}
  \lim_{j\to\infty}\H^n(\Gamma_{\tau,\s,r}^+\setminus A_{r,j}^+)=0\,,\qquad\mbox{for a.e. $r<r_0$}\,.
\end{equation}
By connectedness, $A_{r,j}^+$ is either contained in $A_{r,j}^{{\rm out}}$, or in $E_j$, or in
\[
Y_{r,j}=\bigcup\big\{A:A\in\A_{r,j}^{{\rm out}}\,,A\ne A_{r,j}^{{\rm out}}\big\}\,.
\]
By combining \eqref{theta 1 bordo nw UP} with \eqref{theta 1 bordo nw 3} we find that for a.e. $r<r_0$, if $j$ is large enough, then
\[
A_{r,j}^+\cap E_j=\emptyset\,.
\]
Similarly, should there be a non-negligible set of values of $r$ such that for infinitely many value of $j$ the inclusion $A_{r,j}^+\subset Y_{r,j}$ holds, then by \eqref{theta 1 bordo nw DOWN} and \eqref{theta 1 bordo nw 3} there would be an element of $\A_{r,j}^{{\rm out}}$ different from $A_{r,j}^{{\rm out}}$ with $\H^n$-measure arbitrarily close to $\H^n(\Gamma_{\tau,\s,r}^+)$; thanks to \eqref{theta 1 bordo nw DOWN}, we would then have $\H^n(A_{r,j}^{{\rm out}})\to 0$, against the $\H^n$-maximality of $A_{r,j}^{{\rm out}}$ itself. In conclusion, it must be
\begin{equation}
  \label{theta 1 bordo nw 4}
  \mbox{$A_{r,j}^+\subset A_{r,j}^{{\rm out}}$ for a.e. $r<r_0$ and for $j$ large enough}\,.
\end{equation}
By combining \eqref{theta 1 bordo nw 4}  and \eqref{theta 1 bordo nw 3} we conclude that
\begin{equation}
  \label{theta 1 bordo nw 5}
  \lim_{j\to\infty}\H^n\big(\Gamma_{\tau,\s,r}^+\setminus A_{r,j}^{{\rm out}}\big)=0\,.
\end{equation}
By \eqref{slab decomp}, \eqref{theta 1 bordo nw DOWN} and \eqref{theta 1 bordo nw 5} we conclude that
\[
\limsup_{j\to\infty}\H^n\Big(\pa S_{\tau,r}\setminus \big(A_{r,j}^{{\rm out}}\cup E_j\big)\Big)
\le \H^n(\pa S_{\tau,r}\cap \pa S_{\s,r})\le C(n)\,\s\,r^n\,,
\]
that is \eqref{theta 1 fine 0}. This completes the proof of $\theta(x)\le1$ for $x\in\pa^*E$.

\medskip

\noindent {\it The case $x\in E^{(0)}$}: We claim that, in this case, for every $\s\in(0,\tau)$,
\begin{eqnarray}
  \label{coney island 1}
  \limsup_{j\to\infty}\H^n(E_j\cap \pa S_{\tau,r})\le C(n)\,\s\,r^n\,,
  \\
  \label{coney island 1 star}
  \limsup_{j\to\infty}\big|\H^n\big(\pa S_{\tau,r}\setminus E_j\big)-2\,\om_n\,r^n\big|\le C(n)\,\tau\,r^n\,,
 \end{eqnarray}
 for a.e. $r<r_0(\s,x)$, see
\begin{figure}
  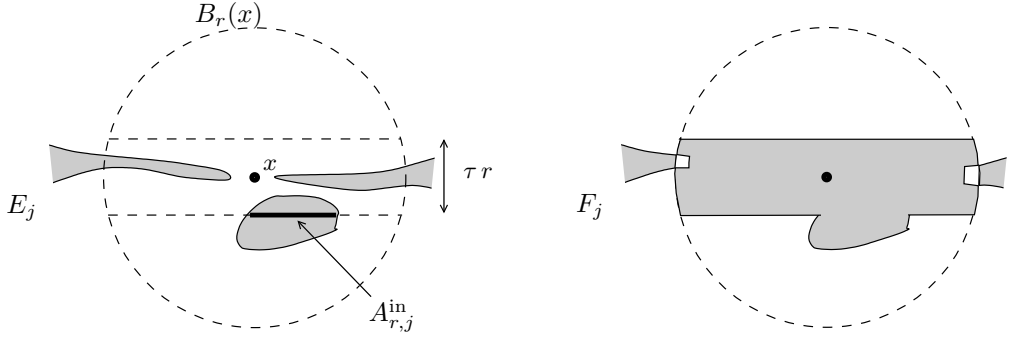\caption{{\small The slab competitor used in proving that $\theta(x)\le 2$ when $x\in E^{(0)}$ is the one defined by $A_{r,j}^{{\rm in}}$. Since $x\in E^{(0)}$ we can show that $E_j\cap\pa S_{\tau,r}$ is ${\rm o}(r^n)$ as $r\to 0^+$.}}\label{fig slab2}
\end{figure}
Figure \ref{fig slab2}. The idea is using the competitor defined by $A_{r,j}^{{\rm in}}$: indeed, \eqref{coney island 1}, \eqref{coney island 1 star}, and \eqref{area of slab competitors second proof} give
\begin{eqnarray*}
 \frac{\mu(B_r(x))}{r^n}
 &\le&\limsup_{j\to\infty}\frac{\H^n(\pa S_{\tau,r}\setminus E_j)}{r^n}
 \\\nonumber
 &&+C(n,\tau)\,\limsup_{j\to\infty}\frac{\H^n\big((E_j\cap\pa S_{\tau,r})\setminus A_{r,j}^{{\rm in}}\big)}{r^n}+C_*\,c(n)\,r
 \\
 &\le&2\,\om_n+C(n)\,\tau+C(n,\tau)\,\s+C_*\,c(n)\,r\,,
\end{eqnarray*}
and then $\theta(x)\le 2$ by letting, in the order, $r\to 0^+$, $\s\to 0^+$ and then $\tau\to 0^+$. The proof of \eqref{coney island 1} and \eqref{coney island 1 star} is simple: since $x\in E^{(0)}$ and $\pa^*E\subset K$, by \eqref{theta 1 bordo nw} and by the divergence theorem we find that
\[
|E\cap B_{r_0}(x)\setminus V_{\s,r_0}|=0\,.
\]
In particular, by the coarea formula we find that for a.e. $r<r_0$,
\[
0=\lim_{j\to\infty}\H^n\Big((E_j\setminus V_{\s,r_0})\cap\pa S_{\tau,r}\Big)=
\lim_{j\to\infty}\H^n\Big(E_j\cap\big(\Gamma_{\tau,\s,r}^+\cup \Gamma_{\tau,\s,r}^-\big)\Big)\,,
\]
so that, by \eqref{slab decomp},
\begin{eqnarray*}
  \H^n(E_j\cap\pa S_{\tau,r})=\H^n(\pa S_{\tau,r}\cap\pa S_{\s,r})+{\rm o}(1)\le C(n)\,\s\,r^n+{\rm o}(1)\,,
\end{eqnarray*}
as $j\to\infty$, that is \eqref{coney island 1}, and
\begin{eqnarray*}
  \big|\H^n(\pa S_{\tau,r}\setminus E_j)-2\,\om_n\,r^n\big|&\le&\H^n(\pa S_{\tau,r}\cap\pa S_{\s,r})+
    \big|\H^n(\Gamma_{\tau,\s,r}^+\cup \Gamma_{\tau,\s,r}^-)-2\,\om_n\,r^n\big|+{\rm o}(1)
    \\
    &\le&C(n)\,\tau\,r^n+{\rm o}(1)
\end{eqnarray*}
as $j\to\infty$, that is \eqref{coney island 1 star}.

\medskip

\noindent {\it The case $x\in E^{(1)}$}: We claim that for every $\s\in(0,\tau)$,
\begin{eqnarray}
  \label{coney island 2}
  \limsup_{j\to\infty}\big|\H^n(E_j\cap \pa S_{\tau,r})-2\,\om_n\,r^n\big|\le C(n)\,\tau\,r^n\,,
  \\
  \label{coney island 2 star}
  \limsup_{j\to\infty}\H^n\big(\pa S_{\tau,r}\setminus E_j\big)\le C(n)\,\s\,r^n\,,
 \end{eqnarray}
 for a.e. $r<r_0(\s,x)$, see
 \begin{figure}
  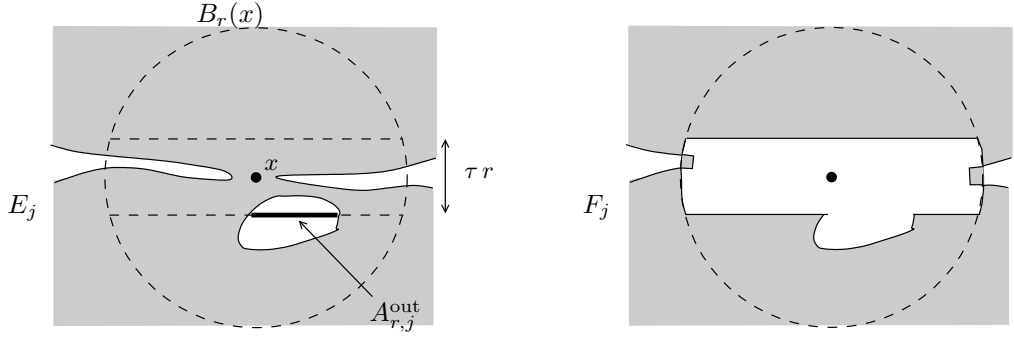\caption{{\small The slab competitor used in proving that $\theta(x)\le 2$ when $x\in E^{(1)}$ is the one defined by $A_{r,j}^{{\rm out}}$.}}\label{fig slab3}
 \end{figure}
 Figure \ref{fig slab3}. Indeed, by using as in the case $x\in\pa^*E$ the competitor defined by $A_{r,j}^{{\rm out}}$, \eqref{coney island 2}, \eqref{coney island 2 star} are combined with \eqref{area of slab competitors first proof} to obtain
 \begin{eqnarray}\nonumber
 \frac{\mu(B_r(x))}{r^n}&\le&\limsup_{j\to\infty}\frac{\H^n(E_j\cap\pa S_{\tau,r})}{r^n}
 \\\nonumber
 &&+C(n,\tau)\,\limsup_{j\to\infty}\frac{\H^n\big(\pa S_{\tau,r}\setminus(A_{r,j}^{{\rm out}}\cup E_j)\big)}{r^n}+C_*\,c(n)\,r
 \\
 &\le& 2\,\om_n+C(n)\,\tau+C(n,\tau)\,\s+C_*\,c(n)\,r\,,
 \end{eqnarray}
 which gives $\theta(x)\le 2$ by letting once again $r\to 0^+$, $\s\to 0^+$ and finally $\tau\to 0^+$. To prove \eqref{coney island 2} and \eqref{coney island 2 star}, we notice that by $x\in E^{(1)}$, $\pa^*E\subset K$, \eqref{theta 1 bordo nw} and the divergence theorem, we have
\[
\big|B_{r_0}(x)\setminus\big(V_{\s,r_0}\cup E\big)\big|=0\,.
\]
By the coarea formula, for a.e. $r<r_0$ we find
\[
0=\lim_{j\to\infty}\H^n\big(\big(\Gamma_{\tau,\s,r}^+\cup \Gamma_{\tau,\s,r}^-\big)\setminus E_j\big)\,,
\]
and conclude as in the previous case by exploiting \eqref{slab decomp}.

\medskip

\noindent {\it Remark}: We make an important remark on the constructions of step five, which will be needed in the proof of Theorem \ref{thm basic regularity}. We claim that, under the assumptions on $x$ considered in step five, for a.e. $r<r_0(\s,x)$ we have
\begin{eqnarray}
\label{the important remark}
  &&\limsup_{\eta\to 0^+}
  \Big|\H^n\Big(\Big\{y\in \cl(S_{\tau,r})\cap\pa F_j^\star:T_y(\pa F_j^{\star})=T_xK\Big\}\Big)-\theta(x)\,\om_n\,r^n\Big|
  \\\nonumber
  &&\hspace{3cm}\le C(n)\,\tau\,r^n+C(n,\tau)\,\s\,r^n+{\rm o}(1)\,,  \qquad\mbox{as $j\to\infty$}\,.
\end{eqnarray}
Here $\star={\rm out}$ if $x\in\pa^*E\cup(K\cap E^{(1)})$, $\star={\rm in}$ if $x\in K\cap E^{(0)}$, and $\theta(x)=1$ if $x\in\pa^*E$ and $\theta(x)=2$ if $x\in K\cap(E^{(0)}\cup E^{(1)})$. Consider, for example, the case when $x\in \pa^*E$. By \eqref{new slabs 2}, $\pa S_{\tau,r}\cap\pa F_j^{{\rm out}} \subset (\pa S_{\tau,r} \setminus A_{r,j}^{{\rm out}}) \cup N_j$ with $\lim_{\eta\to 0^+}\H^n (N_j) = 0$: thus, by taking into account that
\[
T_y(\pa F_j^{{\rm out}})=T_y(\pa S_{\tau,r})\qquad\mbox{$\H^n$-a.e. on $\pa F_j^{{\rm out}}\cap \pa S_{\tau,r}$}
\]
and that
\[
\big\{y\in\pa S_{\tau,r}:T_y(\pa S_{\tau,r})=T_xK\big\}= \pa S_{\tau,r}\setminus\pa B_r(x)\,,
\]
(recall that $T_xK=\nu(x)^\perp$), we have
\begin{eqnarray*}
  &&\Big|\H^n\Big(\Big\{y\in \cl(S_{\tau,r})\cap\pa F_j^{{\rm out}}:T_y(\pa F_j^{{\rm out}})=T_xK\Big\}\Big)-\om_n\,r^n\Big|
  \\
  &\le&\Big|\H^n\Big(\Big\{y\in \pa S_{\tau,r}\cap\pa F_j^{{\rm out}}  :T_y(\pa F_j^{{\rm out}})=T_xK\Big\}\Big)-\om_n\,r^n\Big|
  +\H^n(S_{\tau,r}\cap\pa F_j^{{\rm out}})
  \\
  &\le&\Big|\H^n\Big(\Big\{y\in \pa S_{\tau,r}\setminus A_{r,j}^{{\rm out}}:T_y(\pa S_{\tau,r})=T_xK\Big\}\Big)-\om_n\,r^n\Big|
  +\H^n (N_j) + \H^n(S_{\tau,r}\cap\pa F_j^{{\rm out}})
  \\
  &=&\Big|\H^n\big(\pa S_{\tau,r}\setminus(\pa B_r(x)\cup A_{r,j}^{{\rm out}})\big)-\om_n\,r^n\Big|
  +\H^n (N_j) + \H^n(S_{\tau,r}\cap\pa F_j^{{\rm out}})
\end{eqnarray*}
so that, by \eqref{new slabs 3}, \eqref{theta 1 fine 0}, and $\H^n(\pa S_{\tau,r}\cap\pa B_r(x))\le C(n)\,\tau\,r^n$,
\begin{eqnarray*}
&&
  \limsup_{\eta\to 0^+}\Big|\H^n\Big(\Big\{y\in \cl(S_{\tau,r})\cap\pa F_j^{{\rm out}}:T_y(\pa F_j^{{\rm out}})=T_xK\Big\}\Big)-\om_n\,r^n\Big|
  \\
  &\le&\Big|\H^n(\pa S_{\tau,r}\cap E_j)-\om_n\,r^n\Big|+ C(n,\tau)\H^n\big(\pa S_{\tau,r}\setminus(A_{r,j}^{{\rm out}}\cup E_j)\big)+C(n)\,\tau\,r^n\,.
\end{eqnarray*}
By \eqref{theta 1 fine 0} and \eqref{theta 1 fine} we deduce \eqref{the important remark} when $x\in\pa^*E$. The case when $x\in K\cap(E^{(0)}\cup E^{(1)})$ is treated analogously and the details are omitted.

\medskip

\noindent {\it Step six}: We exclude area concentration near $\pa\Om$, by showing that
\begin{equation}
  \label{no area concentration at the boundary}
  \limsup_{\eta\to 0^+}\limsup_{j\to\infty}\mu_j(\Om\cap U_\eta(\pa\Om))=0\,.
\end{equation}
Exploiting the smoothness and boundedness of $\pa\Om$, we can find $r_0>0$ such that Lemma \ref{lemma close by Lipschitz at boundary} holds, and such that for every $x\in\pa\Om$ there exists an open set $\Om'$ with $\Om\subset\Om'$ and a homeomorphism $f:\cl(\Om)\to\cl(\Om')=f(\cl(\Om))$ with $f(\pa\Om)=\pa\Om'$, $\{f\ne\id\}\cc B_{r_0}(x)$, $f(B_{r_0}(x)\cap\cl(\Om))=B_{r_0}(x)\cap\cl(\Om')$, which is a diffeomorphism $f:\Om\to\Om'$, and such that
\begin{equation}
  \label{boundary diffeo f}
  f\Big(\Om\cap U_\eta(\pa\Om)\cap B_{r_0/2}(x)\Big)\subset \Om'\setminus\Om\,,\qquad \|f-\id\|_{C^1(\Om)}\le C\,\eta\,;
\end{equation}
see
\begin{figure}
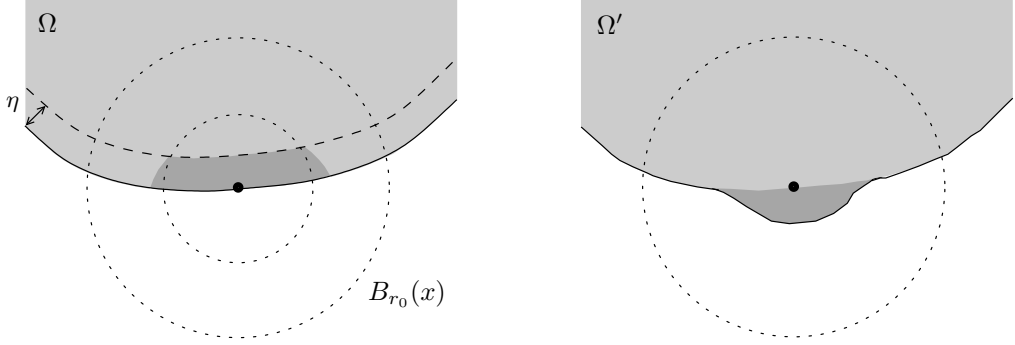
\caption{{\small The boundary diffeomorphism $f$ pushes out $\Om$ into a larger open set $\Om'$. Regions depicted with the same color are mapped one into the other. Notice that the dark region on the left contains $\Om\cap U_\eta(\pa\Om)\cap B_{r_0/2}(x)$, and is mapped outside of $\Om$. The diffeomorphism $f$ can be formally constructed by exploiting the local graphicality of $\Om$, and the simple details are omitted.}}
  \label{fig push}
\end{figure}
Figure \ref{fig push}. Let $\Om^*=f^{-1}(\Om)$ and let $F_j=f(E_j\cap\Om^*)=f(E_j)\cap\Om$. Clearly $F_j\in\E$, and $f(\pa\Om^*)=\pa\Om$ and $\Om^*\cap\pa(E_j\cap\Om^*)=\Om^*\cap\pa E_j$ give
\[
\Om\cap\pa F_j=f(\Om^*)\cap\,f\big(\pa(E_j\cap\Om^*)\big)=f\big(\Om^*\cap\pa E_j\big)\,,
\]
so that $\Om\cap\pa F_j$ is $\C$-spanning $W$ by Lemma \ref{lemma close by Lipschitz at boundary}. Assuming without loss of generality that $r_0<r_*$, by \eqref{volume fixing variation inequality}, $\{f\ne\id\}\cc B_{r_0}(x)$ and $f(B_{r_0}(x)\cap\cl(\Om))=B_{r_0}(x)\cap\cl(\Om')$ we have
\begin{eqnarray*}
\H^n(\Om\cap B_{r_0}(x)\cap \pa E_j)&\le&\H^n\big(f(B_{r_0}(x)\cap\Om^*\cap\pa E_j)\big)+C_*\,\big||F_j|-|E_j|\big|+\frac1j
\\
&\le&\big(1+C\,\eta\big)\,\H^n\big(B_{r_0}(x)\cap\Om^*\cap\pa E_j\big)+C_*\,\big||F_j|-|E_j|\big|+\frac1j\,,
\end{eqnarray*}
where
\begin{eqnarray*}
  \big||F_j|-|E_j|\big|\le\big||E_j\cap\Om^*|-|E_j|\big|+\int_{E_j\cap\Om^*}|Jf-1|
  \le\big|\Om\setminus\Om^*\big|+C\,\e\,\eta\le C\,\eta\,,
\end{eqnarray*}
so that
\begin{eqnarray*}
\H^n(\Om\cap B_{r_0}(x)\cap \pa E_j\setminus\Om^*)\le C\,\eta\,\Big\{\H^n\big(\Om\cap\pa E_j\big)+1\Big\}+\frac1j
\le C\,\eta\,\Big\{\psi(\e)+2 \Big\}+\frac1j\,.
\end{eqnarray*}
Since $\Om\cap U_\eta(\pa\Om)\cap B_{r_0/2}(x)\subset\Om\setminus\Om^*$, by letting $j\to\infty$ we conclude that
\[
\mu\big(B_{r_0/2}(x)\cap U_\eta(\pa\Om)\big)\le C\,\eta\,,\qquad\forall x\in\pa\Om\,.
\]
By a covering argument we find $\mu(\Om\cap U_\eta(\pa\Om))\le C\,\eta$, and thus \eqref{no area concentration at the boundary} follows.

\bigskip

\noindent {\it Step seven}: Let us now pick $R>0$ such that $W\cup K\cup E\cc B_R(0)$. If $E_j\subset B_{R+1}(0)$ for infinitely many values of $j$, then $|E|=\e$ and $\mu_j(\Om\setminus B_{R+1}(0))=0$, which combined with \eqref{no area concentration at the boundary} implies $\mu_j(\Om)\to\mu(\Om)=\F(K,E)$ as $j\to\infty$, and thus $\psi(\e)=\F(K,E)$ with $(K,E)\in\KK$ and $|E|=\e$: thus $(K,E)$ is a generalized minimizer of $\psi(\e)$, as desired. We now assume without loss of generality that $|E_j\setminus B_{R+1}(0)|>0$ for every $j$. By \eqref{almostthere},
\[
\limsup_{j\to\infty}|E_j\cap(B_{R+1}(0)\setminus B_R(0))|=\limsup_{j\to\infty}\H^n((B_{R+1}(0)\setminus B_R(0))\cap\pa E_j)=0\,.
\]
By the coarea formula, this implies that for a.e. $s\in(R,R+1)$,
\begin{equation}
  \label{infinity 1}
\limsup_{j\to\infty}\H^n(E_j\cap\pa B_s(0))=\limsup_{j\to\infty}\H^{n-1}(\pa E_j\cap\pa B_s(0))=0\,.
\end{equation}
We fix a value of $s$ such that \eqref{infinity 1} holds, and we let $A_j$ denote an $\H^n$-maximal connected component of $\pa B_s(0)\setminus\pa E_j$. It must be $A_j\cap E_j=\emptyset$: for, otherwise, by the spherical isoperimetric inequality, $A_j\subset E_j$ would imply
\begin{eqnarray*}
C(n)\,\H^{n-1}(\pa B_s(0)\cap \pa E_j)^{n/(n-1)}&\ge&\H^n(\pa B_s(0)\setminus A_j)\ge\H^n(\pa B_s(0)\setminus E_j)
\\
&\ge& c(n)\,R^n-\H^n(E_j\cap\pa B_s(0))\,,
\end{eqnarray*}
a contradiction to \eqref{infinity 1}. Since $A_j\cap E_j=\emptyset$, we can consider the exterior cup competitor defined by $E_j$ and $A_j$. More precisely, for every $j$ there exists a decreasing sequence $\{\eta^j_k\}_{k=1}^\infty$ with $\lim_{k \to \infty} \eta^j_k = 0$ such that, setting
\begin{eqnarray*}
Y_j = \pa B_s(0) \setminus \cl ((E_j \cap \pa B_s (0)) \cup A_j)\,, &\quad&  S_j = \pa E_j \cap \cl (A_j) \setminus \left( \cl ((E_j \cap \pa B_s(0)) \cup Y_j) \right)\,,\\
 U_{j,k} = \pa B_s (0) \cap \{{\rm d}_{S_j} < \eta^j_k\}\,,& \quad & Z_{j,k} = Y_j \cup \left( U_{j,k} \setminus \cl (E_j \cap \pa B_s (0)) \right)\,,
\end{eqnarray*}
the sets
\[
F_{j,k}=\big(E_j\cap B_s(0)\big)\cup M_{\eta^j_k}(Z_{j,k})
\]
satisfy $F_{j,k} \in \E$, with $\Om\cap\pa F_{j,k}$ $\C$-spanning $W$, $F_{j,k}\subset B_{R+1}$ and
\begin{eqnarray}  \label{infinity 2}
  \limsup_{k \to \infty}\H^n(\Om\cap\pa F_{j,k})&\le&\H^n(\Om\cap B_s(0)\cap\pa E_j)+2\,\H^n(\pa B_s(0)\setminus A_j)
  \\
  &\le&\nonumber
  \H^n(\Om\cap B_s(0)\cap\pa E_j)+C(n)\,\H^{n-1}(\pa B_s(0)\cap\pa E_j)^{n/(n-1)}\,.
  \end{eqnarray}
Since $|E_j\setminus B_{R+1}(0)|>0$ for every $j$, we can select $k(j)$ sufficiently large so that
\begin{equation} \label{infinity fix}
\H^n (\Om \cap \pa F_{j,k(j)}) \leq \H^n(\Om\cap B_s(0)\cap\pa E_j)+C(n)\,\H^{n-1}(\pa B_s(0)\cap\pa E_j)^{n/(n-1)} + \frac{1}{j}\,,
\end{equation}
as well as $|E_j\setminus B_s(0)|>|M_{\eta^j_{k(j)}}(Z_{j,k(j)})|$; then, after setting $F_j = F_{j,k(j)}$, define $\rho_j>0$ by the equation
\[
|B_{\rho_j}|=|E_j|-|F_j|=|E_j\setminus B_s(0)|-|M_{\eta^j_{k(j)}}(Z_{j,k(j)})|\,.
\]
In particular, $|B_{\rho_j}|\le\e$, so that we can find $x\in\Om$ such that $\cl(B_{\rho_j}(x))\cap\cl(F_j)=\emptyset$ and
\[
E_j^*=F_j\cup B_{\rho_j}(x)\subset B_{R+1+C(n)\,\e^{1/(n+1)}}(0)\qquad\forall j\,.
\]
We notice that $E_j^*\in\E$ with $|E_j^*|=\e$ and $\Om\cap\pa F_j\subset\Om\cap\pa E_j^*$, so that $\Om\cap\pa E_j^*$ is $\C$-spanning $W$: in particular, $\psi(\e)\le\H^n(\Om\cap\pa E_j^*)$. By the Euclidean isoperimetric inequality, and since $|B_{\rho_j}|\le |E_j\setminus B_s(0)|$ by definition of $\rho_j$, we have
\[
P(B_{\rho_j})\le P(E_j\setminus B_s(0))=\H^n(\pa E_j\setminus B_s(0))+\H^n(E_j\cap\pa B_s(0))\,,
\]
so that by \eqref{infinity 1} and \eqref{infinity fix} we get
\begin{eqnarray*}
\psi(\e)&\le&  \limsup_{j\to\infty}\H^n(\Om\cap\pa E_j^*)
\le\limsup_{j\to\infty}\H^n(\Om\cap\pa F_j)+P(B_{\rho_j})
\\
&\le&\limsup_{j\to\infty}\H^n(\Om\cap \pa E_j)+2\,C(n)\,\limsup_{j\to\infty}\H^{n-1}(\pa B_s(0)\cap\pa E_j)^{n/(n-1)}=\psi(\e)\,.
\end{eqnarray*}
We have thus proved that $\{E_j^*\}_j$ is a minimizing sequence for $\psi(\e)$, with $E_j^*\subset B_{R^*}(0)$ for some $R^*$ depending only on $R$, $n$ and $\e$. By repeating the argument of the first six steps with $E_j^*$ in place of $E_j$ we see that $E_j^*\to E^*$ in $L^1(\Om)$ and $\mu_j^*=\H^n\llcorner(\Om\cap\pa E_j^*)\weakstar \mu^*$ where $\mu^*=2\,\H^n\llcorner (K^*\setminus\pa^*E^*)+\H^n\llcorner\pa^*E^*$, and where $(K^*,E^*)\in\KK$ with $|E^*|=\e$ and with
\[
\limsup_{\eta\to 0^+}\limsup_{j\to\infty}\mu_j^*(\Om\cap\ U_\eta(\pa\Om))=0\,.
\]
Therefore $\mu_j^*(\Om)\to\mu^*(\Om)=\F(K^*,E^*)$ and in conclusion
\[
\F(K^*,E^*)=\mu^*(\Om)=\lim_{j\to\infty}\mu_j^*(\Om)=\psi(\e)
\]
so that, by $|E^*|=\e$, $(K^*,E^*)$ is indeed a generalized minimizer of $\psi(\e)$. This concludes the proof of the theorem.
\end{proof}

\section{The Euler-Lagrange equation: Proof of Theorem \ref{thm basic regularity}}\label{section theorem basic regularity}

\begin{proof}
  [Proof of Theorem \ref{thm basic regularity}] Let $(K,E)$ be a generalized minimizer of $\psi(\e)$ and $f:\Om\to\Om$ be a diffeomorphism such that $|f(E)|=|E|$. We want to prove that
 \begin{equation}
   \label{basic tesi}
   \F(K,E)\le\F(f(K),f(E))\,.
 \end{equation}
 Let $K'$ denote the set of points of approximate differentiability of $K$, so that $\H^n(K\setminus K')=0$, and for $x\in K'$ denote by $T_x=T_xK=\nu_x^\perp$ the approximate tangent plane to $K$ at $x$, where $\nu_x\in\SS^n$ is chosen so that $\nu_x=\nu_E(x)$ if $x\in \pa^*E$. As in step five of the proof of Theorem \ref{thm lsc}, for every $\s>0$ we introduce $r_0=r_0(\s,x)$ such that
 \begin{equation}
   \label{basic striscia 0}
    K\cap B_r(x)\subset S_{\s,r}^x=\Big\{y\in B_r(x):|(y-x)\cdot\nu_x|<\s\,r\Big\}\qquad\forall r<r_0(\s,x)\,,
 \end{equation}
 see \eqref{theta 1 bordo nw}. In fact, by Egoroff's theorem, we can find a compact set $K^*\subset K'$ with $\H^n(K\setminus K^*)<\s$ such that $r_*(\s)=\max\{r_0(\s,x):x\in K^*\}\to 0^+$ as $\s\to 0^+$, that is, such that \eqref{basic striscia 0} holds uniformly on $K^*$,
 \begin{equation}
   \label{basic striscia}
    K\cap B_r(x)\subset S_{\s,r}^x\qquad\forall x\in K^*\,,\forall r<r_*(\s)\,.
 \end{equation}
 Similarly, if $G_n$ denotes the family of the $n$-planes in $\R^{n+1}$, endowed with a distance $d$, by Lusin's theorem and up to further decreasing the size of $K^*$ while keeping $\H^n(K\setminus K^*)<\s$, we can make sure that
 \begin{equation}
   \label{basic omega}
    \sup_{x,y\in K^*\,,|x-y|<r}d(T_x,T_y)+\sup_{x,y\in K^*\,,|y-x|<r}|\nabla f(x)-\nabla f(y)|\le \om_*(r)\,,
 \end{equation}
 for a function $\om_*(r)\to 0^+$ as $r\to 0^+$. Finally, since
 \begin{equation*}
 \hspace{2.1cm}\left\{
 \begin{split}
 &\H^n(B_r(x)\cap\pa^*E)={\rm o}(r^n)\,,
 \\
 &\H^n\big(B_r(x)\cap(K\setminus\pa^*E)\big)=\om_n\,r^n+ {\rm o}(r^n)\,,\qquad\mbox{for $\H^n$-a.e. $x\in K\setminus\pa^*E$}\,,
 \end{split}
 \right .
 \end{equation*}
 \begin{equation*}
 \left\{
 \begin{split}
 &\H^n(B_r(x)\cap\pa^*E)=\om_n\,r^n+{\rm o}(r^n)\,,
 \\
 &\H^n\big(B_r(x)\cap(K\setminus\pa^*E)\big)= {\rm o}(r^n)\,, \qquad\mbox{for $\H^n$-a.e. $x\in \pa^*E$}\,,
 \end{split}
 \right .
 \end{equation*}
 as $r\to 0^+$, by Egoroff's theorem, up to decreasing $K^*$ and increasing $\om_*$, we can also entail
 \begin{eqnarray}\label{basic density 2}
   \sup_{x\in K^*\setminus\pa^*E}\H^n(B_r(x)\cap\pa^*E)+\Big|\H^n\big(B_r(x)\cap(K\setminus\pa^*E)\big)-\om_n\,r^n\Big|\le\om_*(r)\,r^n\,,
   \\\label{basic density 1}
   \sup_{x\in K^*\cap \pa^*E}\Big|\H^n(B_r(x)\cap\pa^*E)-\om_n\,r^n\Big|+\H^n\big(B_r(x)\cap(K\setminus\pa^*E)\big)\le\om_*(r)\,r^n\,,
 \end{eqnarray}
 while still keeping $\H^n(K\setminus K^*)<\s$ and $\om_*(r)\to 0^+$ as $r\to 0^+$.

 \medskip

 Let $\{E_j\}_j$ be a minimizing sequence for $\psi(\e)$ converging to $(K,E)$ as in \eqref{mininizing seq conv to gen minimiz}, and consider a point $x\in K^*$. Given $\tau\in(0,1)$ and $\s\in(0,\tau)$, for a.e. $r<r_*(\s)$ such that $B_{2r}(x)\cc\Om$, we have that $\pa S_{\tau,r}^x\cap\pa E_j$ is $\H^{n-1}$-rectifiable for every $j$ (with the exceptional set depending on $x$). For such values of $r$ and for every $\eta\in(0,r/2)$, we can set
 \[
 F_j^x=\left\{
 \begin{split}
   & F_j^{{\rm out}}\,,\qquad\mbox{if $x\in\pa^*E\cup(K^*\cap E^{(1)})$}\,,
   \\
   & F_j^{{\rm in}}\,,\qquad\hspace{0.2cm}\mbox{if $x\in K^*\cap E^{(0)}$}\,,
 \end{split}
 \right .
 \]
 with $F_j^{{\rm out}}$ and $F_j^{{\rm in}}$ defined as in step five of the proof of Theorem \ref{thm lsc}. In particular, $F_j^x\in\E$, $\Om\cap\pa F_j^x$ is $\C$-spanning $W$, $F_j^x\setminus\cl(S_{\tau,r}^x)=E_j\setminus\cl(S_{\tau,r}^x)$ and, as proved in \eqref{the important remark}, for a.e. $r<r_*(\s)$ we have
 \begin{eqnarray}
 \label{the important remark citato}
  &&\limsup_{\eta\to 0^+}
  \Big|\H^n\Big(\Big\{y\in \cl(S_{\tau,r}^x)\cap\pa F_j^x:T_y(\pa F_j^x)=T_x\Big\}\Big)-\theta(x)\,\om_n\,r^n\Big|
  \\\nonumber
  &&\hspace{3cm}\le C(n)\,\tau\,r^n+C(n,\tau)\,\s\,r^n+{\rm o}(1)  \qquad\mbox{as $j\to\infty$}\,,
 \end{eqnarray}
 where $\theta(x)=1$ if $x\in\pa^*E$ and $\theta(x)=2$ if $x\in K\cap (E^{(0)}\cup E^{(1)})$, as well as
 \begin{equation}
   \label{poca roba}
   \limsup_{j\to\infty}\limsup_{\eta\to 0^+}\H^n(S_{\tau,r}^x\cap\pa F_j^x)\le C(n,\tau)\,\s\,r^n\,,
 \end{equation}
 see \eqref{new slabs 3}, \eqref{theta 1 fine 0}, \eqref{coney island 1}, and \eqref{coney island 2 star}. By Besicovitch-Vitali's covering theorem and by Federer's theorem \eqref{federers theorem}, we can find a {\it finite} disjoint family of closed balls $\{B_i=\cl(B_{r_i}(x_i))\}_i$ such that $B_i\cc\Om$ and
 \begin{equation}
   \label{basic covering}
   \H^n\Big(K^*\setminus\bigcup B_{r_i}(x_i)\Big)<\s\,,\qquad x_i\in K^*\cap \big(E^{(0)}\cup E^{(1)}\cup\pa^*E\big)\,,\qquad r_i<r_*(\s)\,.
 \end{equation}
 We let $\eta<\min_i\{r_i/2\}$, define $F_j^{x_i}$ accordingly, and set
 \[
 S_i=S_{\tau,r_i}^{x_i}\cc B_i\,,\qquad T_i=T_{x_i}\,,\qquad F_j^i=F_j^{x_i}\,.
 \]
 Correspondingly, we define a sequence $\{F_j\}_j\subset\E$ with $\Om\cap\pa F_j$ $\C$-spanning $W$ by setting
 \begin{equation}
   \label{basic Fj Ej fuori da Bi}
 F_j\setminus \bigcup_i B_i=E_j\setminus\bigcup_i B_i\,,\qquad F_j\cap B_i=F_j^i\cap B_i\,.
 \end{equation}
 Since $F_j^i\setminus \cl(S_i)=E_j\setminus\cl(S_i)$ we find that
 \begin{equation}
   \label{basic Fj Ej fuori da Si}
    F_j\setminus \bigcup_i \cl(S_i)=E_j\setminus\bigcup_i\cl(S_i)\,,
 \end{equation}
 and, setting,
 \begin{equation}
   \label{theta i def}
    \theta_i=1\quad
 \mbox{if $x_i\in\pa^*E$}\,,\qquad
 \theta_i=2\quad \mbox{if $x_i\in E^{(0)}\cup E^{(1)}$}
 \end{equation}
 we deduce from \eqref{the important remark citato} and \eqref{poca roba} that, for each $i$,
 \begin{eqnarray}
 \label{basic Fj has right tangent plane}
 &&\limsup_{\eta\to 0^+}\Big|\H^n\big(\big\{y\in \cl(S_i)\cap\pa F_j:T_y(\pa F_j)=T_i\big\}\big)-\theta_i\,\om_n\,r_i^n\Big|
 \\\nonumber
 &&\hspace{5cm}\le C(n)\,\tau\,r_i^n+C(n,\tau)\,\s\,r_i^n+{\rm o}(1)
 \\\nonumber
 \\
 \label{basic Fj xi first kind}
 &&\hspace{1.4cm}\limsup_{\eta\to 0^+}\H^n(S_i\cap\pa F_j)\le C(n,\tau)\,\s\,r_i^n+{\rm o}(1)
\end{eqnarray}
as $j\to\infty$. Now let $C_*$ and $\e_*$ the volume-fixing variation constants defined by $f(E)$. By the monotonicity formula \eqref{monotonicity}, which can be applied to $B_{r_i}(x_i)$ as $x_i\in K$, we have
\begin{equation}\label{basic sum rin 0}
  e^{-\Lambda\,r_*(\s)}\,\theta_i\,\om_n\,r_i^n\le e^{-\Lambda\,r_i}\,\theta_i\,\om_n\,r_i^n\le \mu(B_{r_i}(x_i))=\mu(S_i)\,,
\end{equation}
where in the last identity we have used \eqref{basic striscia}, and where $\Lambda$ depends on $E$. By \eqref{basic sum rin 0}, $\theta_i \geq 1$, and $\mu = \theta \, \H^n \llcorner K$ with $\theta \leq 2$,
\begin{equation}
  \label{basic sum rin}
  \sum_i\,r_i^n\le C(n,E)\,\sum_i\H^n(K\cap B_i)\le C(n,E)\, \H^n(K)=C(n,E,K)\,,
\end{equation}
so that, by \eqref{basic Fj Ej fuori da Si}, $|S_i|\le C(n)\,\tau\,r_i^{n+1}$ and $r_i\le r_*(\s)\le 1$, we find
\[
|F_j\Delta E_j|\le\sum_i|S_i|\le C(n,E,K)\,\tau\,.
\]
Therefore,
\begin{eqnarray*}
  |f(F_j)\Delta f(E)|\le C\big(n,E,\Lip (f),\H^n(K)\big)\,\Big\{\tau+|E_j\Delta E|\Big\}<\e_*\,,
\end{eqnarray*}
provided $j$ is large enough and $\tau$ is small enough depending on $\e_*$. By the volume-fixing variations construction, for each $j$ large enough there exists a smooth map $\Phi_j:(-\e_*,\e_*)\times\R^{n+1}\to\R^{n+1}$, such that, for every $|v|<\e_*$, $\Phi_j(v,\cdot)$ is a diffeomorphism with $\Phi_j(v,\Om)=\Om$ and
\[
|\Phi_j(v,f(F_j))|=v+|f(F_j)|\,,\qquad \H^n\big(\Phi_j(v,\Sigma)\big)\le\H^n(\Sigma)+ C_*\,|v|\,\H^n(\Sigma)\,,
\]
for every $\H^n$-rectifiable set $\Sigma\subset\Om$. In particular, if we set
\[
G_j=\Phi_j(v_j,f(F_j))\,,\qquad v_j=|f(E)|-|f(F_j)|=|E|-|f(F_j)|\,,
\]
then we find that $G_j\in\E$, $|G_j|=|E|=\e$ and
\[
\H^n(\Om\cap\pa G_j)\le  \left( 1  +   C\big(n,E,\Lip(f),\H^n(K)\big)\,\Big\{\tau+|E_j\Delta E|\Big\} \right)    \H^n(\Om\cap\pa f(F_j))\,.
\]
Since $\Om\cap\pa F_j$ is $\C$-spanning $W$, so is $\Om\cap\pa G_j$ thanks to Lemma \ref{statement spanning is close by Lipschitz maps}, so that the minimizing sequence property of $E_j$ implies
\begin{equation}
  \label{basic Ej prima}
  \H^n(\Om\cap \pa E_j)\le \left( 1  +   C\,\Big\{\tau+|E_j\Delta E|\Big\} \right)    \H^n(\Om\cap\pa f(F_j)) + \frac1j\,,
\end{equation}
where, here and for the rest of the proof, $C$ is a generic constant depending on $K$, $E$, $f$ and $n$. We now claim that
\begin{equation}
  \label{basic Fj stima}
  \limsup_{\s\to 0^+}\limsup_{j\to\infty}\,\limsup_{\eta\to0^+}\H^n(\Om\cap\pa f(F_j))\le\F(f(K),f(E))+C\,\tau\,.
\end{equation}
Notice that by combining \eqref{basic Ej prima} and \eqref{basic Fj stima}, and by finally letting $\tau\to 0^+$, we complete the proof of \eqref{basic tesi}.

\medskip

To prove \eqref{basic Fj stima}, we notice that $f(\Om)=\Om$, $\Om\cap\pa f(F_j)=f(\Om\cap\pa F_j)$, and \eqref{basic Fj Ej fuori da Si} yield
\begin{eqnarray*}
  \H^n(\Om\cap \pa f(F_j))\le
  \H^n\Big(f\Big(\Om\cap\pa E_j\setminus\bigcup_i\cl(S_i)\Big)\Big)
 +\sum_i\int_{\cl(S_i)\cap\pa F_j}J^{\pa F_j}f\,d\H^n\,,
\end{eqnarray*}
where
\[
\limsup_{j \to \infty} \, \limsup_{\eta \to 0^+}\H^n\Big(f\Big(\Om\cap\pa E_j\setminus\bigcup_i \cl(S_i)\Big)\Big)
\le C\,\H^n\Big(K\setminus\bigcup_i S_i\Big)\le C\,\s
\]
by \eqref{basic striscia}, \eqref{basic covering}, and $\H^n(K\setminus K^*)<\s$. Hence, as
 \begin{eqnarray}\label{basic 1}
  \H^n(\Om\cap \pa f(F_j))\le\sum_i\int_{\cl(S_i)\cap\pa F_j}J^{\pa F_j}f\,d\H^n
  +C\,\s + {\rm o}(1)\,,
\end{eqnarray}
where ${\rm o}(1) \to 0^+$ if we let first $\eta \to 0^+$ and then $j \to \infty$.\\

If we set
\[
Z_i=\big\{y\in \pa S_i\cap\pa F_j:T_y(\pa F_j)=T_i\big\}\,,
\]
then by \eqref{basic Fj has right tangent plane} and \eqref{basic Fj xi first kind} we find
\begin{eqnarray*}
\H^n\big((\cl(S_i)\cap\pa F_j)\Delta Z_i\big)\le C(n)\,\tau\,r_i^n+C(n,\tau)\,\s\,r_i^n+{\rm o}(1)\,,
\\
\big|\H^n(Z_i)-\theta_i\,\om_n\,r_i^n\big|  \le C(n)\,\tau\,r_i^n+C(n,\tau)\,\s\,r_i^n+{\rm o}(1)\,,
\end{eqnarray*}
where ${\rm o}(1)\to 0^+$ if we let first $\eta\to 0^+$ and then $j\to\infty$. Also, it follows from \eqref{basic sum rin 0}, the characterization of $\mu$, and \eqref{basic density 1} that
\begin{equation} \label{basic sum rin 0 nuova}
e^{-\Lambda \, r_*(\s)} \, \theta_i \, \om_n \, r_i^n \leq \theta_i \, \H^n(S_i \cap K) + \om_*(r_i) \, r_i^n\,.
\end{equation}

By \eqref{basic omega}, \eqref{basic sum rin 0 nuova}, and $r_i < r_*(\s)$, we thus find
\begin{eqnarray}\nonumber
\int_{\cl(S_i)\cap\pa F_j}J^{\pa F_j}f&\le&
\int_{Z_i}J^{T_i}f+(\Lip\,f)^n\big\{C(n)\,\tau+C(n,\tau)\,\s\big\}\,r_i^n+{\rm o}(1)
\\\nonumber
&\le&
\theta_i\,\om_n\,r_i^n\,\big\{J^{T_i}f(x_i)+C(n)\,\om_*(r_i)\big\}
+C\,\big\{\tau+C(n,\tau)\,\s\big\}\,r_i^n+{\rm o}(1)
  \\\nonumber
  &\le&\Big\{J^{T_i}f(x_i)+\,C\,\Big(\om_*(r_*(\s))+\tau+C(n,\tau)\,\s\Big)\,\Big\}\times
  \\\nonumber
  &&\times\,\left( \theta_i \, \H^n(S_i\cap K)  + \om_*(r_*(\s)) \, r_i^n  \right)\,e^{\Lambda\,r_*(\s)}+{\rm o}(1)
  \\\label{basic 2}
  &=&J^{T_i}f(x_i)\big(\theta_i \, \H^n(S_i\cap K^*)+\a_i + \om_*(r_*(\s)) \, r_i^n\big)\,e^{\Lambda\,r_*(\s)}
  \\\nonumber
  &&+C\,\Big\{\om_*(r_*(\s))+\tau+C(n,\tau)\,\s\Big\}\,\left( \H^n(S_i\cap K)    +   \om_*(r_*(\s)) \, r_i^n   \right)\,e^{\Lambda\,r_*(\s)}
\\\nonumber
  &&+{\rm o}(1)\,,
\end{eqnarray}
where we have set
\begin{equation}
  \label{basic sum ai}
  \a_i=\theta_i \,\H^n(S_i\cap(K\setminus K^*))\quad\mbox{so that}\quad
  \sum_i\a_i<2\,\s\,.
\end{equation}
Now, again by \eqref{basic omega} we see that
\begin{eqnarray*}
\theta_i\,J^{T_i}f(x_i)\,\H^n(S_i\cap K^*)&\le&\theta_i\int_{S_i\cap K^*}J^Kf\,d\H^n+C(n)\,\om_*(r_i)\,\H^n(S_i\cap K^*)
\\
&=&\theta_i\,\H^n(f(S_i\cap K^*))+C(n)\,\om_*(r_i)\,\H^n(S_i\cap K^*)\,.
\end{eqnarray*}
By combining this last relation with \eqref{basic sum rin}, \eqref{basic 1}, \eqref{basic 2} and $r_i<r_*(\s)$, we find that
 \begin{eqnarray}\label{basic 3}
  \H^n(\Om\cap \pa f(F_j))&\le&e^{\Lambda\,r_*(\s)}\,\sum_i\theta_i\,\H^n(f(S_i\cap K^*))
  \\\nonumber
  &&+
C\,\Big\{\om_*(r_*(\s))+\tau+C(n,\tau)\,\s\Big\}\,e^{\Lambda\,r_*(\s)}+{\rm o}(1)\,,
\end{eqnarray}
with ${\rm o}(1)\to 0$ as first $\eta\to 0^+$ and then $j\to\infty$. If $x_i\in K^*\setminus\pa^*E$, then $\theta_i=2$ and by \eqref{basic density 2} we have
\begin{eqnarray*}
\theta_i\,\H^n(f(S_i\cap K^*))&\le& 2\,\H^n\big(f\big(S_i\cap (K^*\setminus\pa^*E)\big)\big)+2\,\Lip(f)^n\,\om_*(r_i)\,r_i^n
\\
&\le&2\H^n\big(f\big(S_i\cap (K\setminus\pa^*E)\big)\big)+C\,\om_*(r_*(\s))\,r_i^n\,;
\end{eqnarray*}
if, instead, $x_i\in \pa^*E$, then $\theta_i=1$ and \eqref{basic density 1} give
\begin{eqnarray*}
\theta_i\,\H^n(f(S_i\cap K^*))&\le& \H^n\big(f\big(S_i\cap K^*\cap \pa^*E\big)\big)+\Lip(f)^n\,\om_*(r_i)\,r_i^n
\\
&\le&\H^n\big(f(S_i\cap \pa^*E)\big)+C\,\om_*(r_*(\s))\,r_i^n\,;
\end{eqnarray*}
combining these last two estimates with \eqref{basic sum rin}, we find
\begin{eqnarray*}
\sum_i\theta_i\,\H^n(f(S_i\cap K^*))&\le& \sum_i\,2\,\H^n\big(f\big(S_i\cap (K\setminus\pa^*E)\big)\big)
+\H^n\big(f(S_i\cap \pa^*E)\big)
\\
&&+C\,\om_*(r_*(\s))\,\sum_i\,r_i^n
\\
&\le& \F\Big(f(K),f(E);\bigcup_if(S_i)\Big)+C\,\om_*(r_*(\s))\,,
\end{eqnarray*}
where $f(\pa^*E)=\pa^*f(E)$ by Lemma \ref{lemma redb}. Combining this last estimate with \eqref{basic 3} we find
 \[
 \H^n(\Om\cap \pa f(F_j))\le
 \,e^{\Lambda\,r_*(\s)}\,\Big\{\F(f(K),f(E))+
C\,\big\{\om_*(r_*(\s))+\tau+C(n,\tau)\,\s\big\}\Big\}+{\rm o}(1)\,,
\]
where ${\rm o}(1)\to 0$ as first $\eta\to 0^+$ and then $j\to\infty$; in particular, \eqref{basic Fj stima} holds.

\medskip

We conclude the proof. As explained, \eqref{basic Fj stima} implies \eqref{basic tesi}. By a classical first variation argument, see Appendix \ref{memme}, we deduce the existence of $\l\in\R$ such that
  \begin{equation}
    \label{basic stationary main}
    \l\,\int_{\pa^*E}X\cdot\nu_E\,d\H^n=\int_{\pa^*E}\Div^K\,X\,d\H^n+2\,\int_{K\setminus\pa^*E}\Div^K\,X\,d\H^n\,,
  \end{equation}
  for every $X\in C^1_c(\R^{n+1};\R^{n+1})$ with $X\cdot\nu_\Om=0$ on $\pa\Om$. Let us now consider the integer rectifiable varifold $V$ supported on $K$, with density $2$ on $K\setminus\pa^*E$ and $1$ on $\pa^*E$. By \eqref{basic stationary main}, we can compute the first variation of $V$ as
  \[
  \de V(X)=\int\,\vec{H}\cdot X\,d\,\|V\|\qquad\forall X\in C^1_c(\Om;\R^{n+1})
  \]
  where $\vec{H}=0$ on $K\setminus\pa^*E$ and $\vec{H}=\l\,\nu_E$ on $\pa^*E$. In particular, $\vec{H}\in L^\infty(\|V\|)$, and by Allard's regularity theorem \cite[Chapter 5]{SimonLN}, we have $K=\Sigma\cup{\rm Reg}$, where $\Sigma\subset K$ is closed and has empty interior in $K$, and where
 for every $x\in {\rm Reg}$ there exists a $C^{1,\a}$-function $u$ defined on $\R^n$ such that
  \begin{equation}
    \label{allard local graph}
      B_{r_x/2}(x)\cap K=B_{r_x/2}(x)\cap{\rm Reg}=B_{r_x/2}(x)\cap{\rm graph}(u)\,.
  \end{equation}
  By the divergence theorem, if $x\in{\rm Reg}\cap\pa E$, then, by \eqref{allard local graph} and by $\Om\cap\pa E\subset K$,
  \begin{eqnarray}\label{allard local epigraph}
    &&\mbox{$E={\rm epigraph}(u)$}\qquad\hspace{0.6cm}\mbox{inside $B_{r_x/2}(x)$}\,,
    \\\label{allard local red bound}
    &&\mbox{$K=\pa E={\rm graph}(u)$}\qquad\mbox{inside $B_{r_x/2}(x)$}\,,
  \end{eqnarray}
  which imply ${\rm Reg}\cap\pa E\subset\Om\cap\pa^*E$. Viceversa, if $x\in\Om\cap\pa^*E$, then $\H^n(B_r(x)\cap(K\setminus\pa^*E))={\rm o}(r^n)$ and $\H^n(B_r(x)\cap\pa^*E)=\om_n\,r^n+{\rm o}(r^n)$ as $r\to 0^+$, so that Allard's regularity theorem implies $\Om\cap\pa^*E\subset{\rm Reg}\cap\pa E$. Thus ${\rm Reg}\cap\pa E=\Om\cap\pa^*E$, and, in particular, $\Om\cap(\pa E\setminus\pa^*E)\subset\Sigma$, so that $\Om\cap(\pa E\setminus\pa^*E)$ has empty interior in $K$. Moreover, by \eqref{allard local epigraph}, \eqref{basic stationary main} implies that the graph of $u$ has constant mean curvature in $B_{r_x/2}(x)$, and thus that $\pa^*E$ is a smooth hypersurface, see e.g. \cite[Section 8.2]{GiMa}. Finally, \eqref{basic stationary main} implies that $K\setminus\pa E$ is the support of a multiplicity one stationary varifold in the open set $\Om\setminus\pa E$, so that $K\setminus(\Sigma\cup\pa E)$ is a smooth hypersurface with zero mean curvature, and $\H^n(\Sigma\setminus\pa E)=0$. The proof of Theorem \ref{thm basic regularity} is complete.
\end{proof}

\section{Convergence to Plateau's problem: Proof of Theorem \ref{thm convergence as eps goes to zero}}\label{section convergence to plateau} This section is devoted to showing that $\psi(\e)\to 2\,\ell$ as $\e\to 0^+$ and that a sequence $\{(K_h,E_h)\}_h$ of generalized minimizers for $\psi(\e_h)$ with $\e_h\to 0^+$ as $h\to\infty$ has to converge to a minimizer $S$ for Plateau's problem $\ell$ counted with multiplicity $2$ in the sense of Radon measures. If one could prove the latter assertion directly, then the former would follow at once by lower semicontinuity of weak-star converging Radon measures and by the upper bound $\psi(\e)\le 2\,\ell+C\,\e^{n/(n+1)}$ proved in \eqref{psi eps basic bounds}. A possible direct approach to the convergence of $(K_h,E_h)$ to a minimizer of Plateau's problem may be tried using White's compactness theorem \cite{whiteMULT2limit}. That would require proving an $L^1$-bound on the first variations of the varifolds $V_h$ supported on $K_h$ with density $1$ on $\Om\cap\pa^*E_h$ and with density $2$ on $K_h\setminus\pa^*E_h$. The validity of such bound is supported by the analysis of simple examples like  Example \ref{example two points} and Example \ref{example tripe sing}. However, Example \ref{example tripe sing} also indicates that when singularities are present in the limit Plateau minimizers $S$, then an $L^1$-bound for the mean curvatures of the varifolds $V_h$ would result from a quantitative balance between the rate of divergence towards $-\infty$ of the constant mean curvatures of the reduced boundaries $\pa^*E_h$, and the rate of vanishing of the areas $\H^n(\Om\cap\pa^*E_h)$. Validating a quantitative analysis of this kind in some generality would be of course very interesting per se as a way to describe the behavior of generalized minimizers; nonetheless, completing this analysis has so far eluded our attempts. Coming back to the proof of Theorem \ref{thm convergence as eps goes to zero}, we adopt a different approach. We prove directly that $\psi(\e)\to 2\,\ell$ as $\e\to 0^+$ by exploiting the same ``compactness-by-comparison'' strategy adopted in the proof of Theorem \ref{thm lsc}. An interesting point here is that because $|E_h|=\e_h\to 0^+$, we do not have a limit set that we can use to uniformly adjust volumes among local competitors of the elements of the minimizing sequence, and have to use a sort of ``absolute minimality at vanishing volumes'' of any sequence $\{(K_h,E_h)\}_h$ of generalized minimizers such that $\lim_{h\to\infty}\F(K_h,E_h)$ is equal to $\liminf_{\e\to 0^+}\psi(\e)$.

\begin{proof}[Proof of Theorem \ref{thm convergence as eps goes to zero}]
  {\it Step one}: We start proving that $\psi$ is lower semicontinuous on $(0,\infty)$. Given $\e_0>0$, let $\e_j\to\e_0>0$ as $j\to\infty$ be such that
  \[
  \lim_{j\to\infty}\psi(\e_j)=\liminf_{\e\to\e_0}\psi(\e)\,,
  \]
  and let $E_j\in\E$ be such that $|E_j|=\e_j$ and $\H^n(\Om\cap\pa E_j)\le\psi(\e_j)+1/j$. By \eqref{psi eps basic bounds}, $\psi(\e_j)$ is bounded in $j$, and thus by the compactness criteria for sets of finite perimeter and for Radon measures we have that, up to extracting subsequences, $\mu_j=\H^n\llcorner(\Om\cap\pa E_j)\weakstar \mu$ as Radon measures in $\Om$ and $E_j\to E$ in $L^1_{{\rm loc}}(\Om)$, where $\mu$ is a Radon measure in $\Om$, and where $E\subset\Om$ is a set of finite perimeter. We now repeat the proof of Theorem \ref{thm lsc}, with the only difference that while $|E_j|$ was constant in that proof, we know have that $|E_j|=\e_j\to\e_0$ for some $\e_0>0$. The modifications are minimal. In step two (nucleation of the sequence $E_j$), we repeat {\it verbatim} the argument, using the facts that $|E_j|\ge\e_0/2$ and that $\H^n(\Om\cap\pa E_j)\le 2\,\ell+C\,\e_0^{n/(n-1)}+1$ in place of $|E_j|=\e$ and $\H^n(\Om\cap\pa E_j)\le\psi(\e)+1$. Based on step two, in step three we construct volume-fixing variations with uniform constant $\e_*$ and $C_*$, and then repeat the rest of the argument without modifications. As a consequence, we can show that $\mu=\theta\,\H^n\llcorner K$ and $(K,E)\in\KK$ is a generalized minimizer of $\psi(\e_0)$, with
  \[
  \psi(\e_0)=\mu(\Om)=\lim_{j\to\infty}\mu_j(\Om)\leq\lim_{j\to\infty}\psi(\e_j)=\liminf_{\e\to\e_0}\psi(\e)\,,
  \]
  as claimed. The key information here is of course that $|E_j|\ge\e_0/2$ where $\e_0>0$. If $\e_0=0$, then the nucleation lemma is inconsequential, and the argument cannot be used.

  \medskip

  \noindent {\it Step two}: Thanks to \eqref{psi eps basic bounds}, to prove $\psi(\e)\to 2\,\ell$ as $\e\to 0^+$ we just need to show that
  \begin{equation}
    \label{psi eps lb 2 ell}
  \liminf_{\e\to 0^+}\psi(\e)\ge2\,\ell\,.
  \end{equation}
  To this end, we pick a sequence $\e_h\to 0^+$ such that
  \begin{eqnarray*}
    \label{ias1}
    \liminf_{\e\to 0^+}\psi(\e)=\lim_{h\to\infty}\psi(\e_h)\,.
  \end{eqnarray*}
  Notice that, in this way, given an arbitrary sequence $\s_h\to 0^+$, we have
  \begin{equation}
    \label{ias2}
      \limsup_{h\to\infty}\left[\psi(\e_h)-\psi(\s_h)\right]\le 0\,.
  \end{equation}
  Let $\{E_{h,j}\}_j$ be a minimizing sequence in $\psi(\e_h)$. By Theorem \ref{thm lsc}, there exists a generalized minimizer $(K_h,E_h)$ in $\psi(\e_h)$ such that, up to extracting subsequences,
  \begin{eqnarray*}
  &&E_{h,j}\to E_h\qquad\hspace{5.8cm}\mbox{in $L^1(\Om)$ as $j\to\infty$}\,,
  \\
  &&\mu_{h,j}=\H^n\llcorner(\Om\cap\pa E_{h,j})\weakstar \mu_h\qquad\hspace{2.8cm}\mbox{as Radon measures in $\Om$ as $j\to\infty$}\,,
  \\
  &&\mbox{$|E_{h,j}|=\e_h$ and}\,\, \H^n(\Om\cap\pa E_{h,j})\le \psi(\e_h)+\frac1j\,,\qquad\forall j\in\N\,,
  \end{eqnarray*}
  where, by \eqref{psi eps basic bounds} and up to extracting a further subsequence,
  \begin{eqnarray}\label{ias def muh}
    \mu_h=2\,\H^n\llcorner(K_h\setminus\pa^*E_h)+\H^n\llcorner(\Om\cap\pa^*E_h)\weakstar\mu\qquad\mbox{as Radon measures in $\Om$}
  \end{eqnarray}
  for some Radon measure $\mu$ in $\Om$. Given $x\in\Om \cap \spt\,\mu$, we set $d(x)=\dist(x,\pa\Om)$, and let
  \begin{equation}
    \label{ias5}
    H_{x,r}=\big\{h\in\N:|E_h\setminus B_r(x)|>0\big\}\,,\qquad I_x=\big\{r\in(0,d(x)):\mbox{$H_{x,r}$ is infinite}\big\}\,.
  \end{equation}
  We now look at local variations $F_{h,j}$ of $E_{h,j}$ such that $|F_{h,j}|$ has a positive limit volume $\s_h$ as $j\to\infty$, which in turn satisfies $\s_h\to 0^+$ as $h\to\infty$. The idea is that, by \eqref{ias2}, we will be able to use such variations to gather information on $\mu$.

  \medskip

  \noindent {\it Claim}: for every $r\in I_x$, if $\{F_{h,j}\}_{h\in H_{x,r},\,j \in \mathbb{N}}\subset\E$ is such that $\Om\cap\pa F_{h,j}$ is $\C$-spanning $W$ and $F_{h,j}\Delta E_{h,j}\subset\cl(B_r(x))$ for every $h\in H_{x,r}$ and every $j\in\N$, and if
  \begin{equation}
    \label{ias sh to zero}
    \exists\,\,\s_h=\lim_{j\to\infty}|F_{h,j}|>0\,,\qquad\mbox{and}\quad\lim_{h\in H_{x,r}\,,h\to\infty}\s_h=0\,,
  \end{equation}
  then
  \begin{equation}
    \label{ias7}
    \mu(B_r(x))\le \liminf_{h\in H_{x,r}\,,h\to\infty}\liminf_{j\to\infty}\H^n\big(\cl(B_r(x))\cap\pa F_{h,j}\big)\,.
  \end{equation}

  \medskip

  \noindent To prove this claim, we first notice that, for every $h\in H_{x,r}$,
  \begin{equation}
  \label{ias5starstar}
  \s_h=\lim_{j\to\infty}|F_{h,j}|\ge|E_h\setminus B_r(x)|>0\,.
  \end{equation}
  In particular, for $j$ large enough, $|F_{h,j}|>0$, $\psi(|F_{h,j}|)$ is well-defined, and $F_{h,j}$ is a competitor for $\psi(|F_{h,j}|)$, so that
  \begin{eqnarray}\nonumber
    \psi\big(|F_{h,j}|\big)&\le&\H^n(\Om\cap\pa F_{h,j})=\H^n(\cl(B_r(x))\cap\pa F_{h,j})+\H^n(\pa E_{h,j}\cap\Om\setminus\cl(B_r(x)))
    \\
    \nonumber
    &\le&\H^n(\cl(B_r(x))\cap\pa F_{h,j})+\psi(\e_h)+\frac1{j}-\H^n(\pa E_{h,j}\cap B_r(x))
  \end{eqnarray}
  which can be recombined into
  \[
  \mu_{h,j}(B_r(x))\le \H^n(\cl(B_r(x))\cap\pa F_{h,j})+\psi(\e_h)-\psi\big(|F_{h,j}|\big)+\frac1{j}\,.
  \]
  Letting $j\to\infty$, by $\mu_{h,j}\weakstar\mu_h$, $|F_{h,j}|\to \s_h>0$, and the lower semicontinuity of $\psi$ on $(0,\infty)$, we find that
  \[
  \mu_h(B_r(x))\le\liminf_{j\to\infty}\H^n(\cl(B_r(x))\cap\pa F_{h,j})+\psi(\e_h)-\psi(\s_h)\,.
  \]
  Since $\s_h\to 0^+$ as $h\to\infty$ with $h\in H_{x,r}$, by $\mu_h\weakstar\mu$ and \eqref{ias2} we deduce \eqref{ias7}, and thus prove the claim.

  \medskip

  \noindent {\it Step three}: We now fix $x\in\spt\mu$, set $f(r)=\mu(B_r(x))$, and prove that, for a.e. $r\in I_x$,
  \begin{eqnarray}\label{ias late 1}
    &&\left\{\begin{split}
    &\mbox{either $f'(r)\ge c(n)\,r^{n-1}$}\,,
    \\
    &\mbox{or $(f^{1/n})'(r)\ge c(n)$}\,,
    \end{split}
    \right .
    \\\label{ias9}
    &&f(r)\le \frac{r}n\,f'(r)\,.
  \end{eqnarray}
  By using the coarea formula together with $|E_h|\to 0$ as $h\to\infty$ and $E_{h,j}\to E_h$ as $j\to\infty$, we find that for a.e. $r<d(x)$,
  \begin{eqnarray}\label{ias rect}
    \mbox{$\pa E_{h,j}\cap\pa B_r(x)$ is $\H^{n-1}$-rectifiable}\,,
    \\\label{ias needed for}
    \lim_{j\to\infty}\H^n(E_{h,j}\cap\pa B_r(x))=\H^n(E_h\cap\pa B_r(x))\,,
    \\\label{ias needed for Ahj to zero}
    \lim_{h\to\infty}\lim_{j\to\infty}\H^n(E_{h,j}\cap\pa B_r(x))=0\,,
  \end{eqnarray}
  for every $h,j\in\N$. Moreover, if we set
  \[
  f_{h,j}(r)=\mu_{h,j}(B_r(x))\,,\qquad f_h(r)=\mu_h(B_r(x))\,.
  \]
  then, again by the coarea formula and by Fatou's lemma, for a.e. $r<d(x)$ we find
  \begin{equation}\label{ias derivate}
  \begin{split}
    \H^{n-1}(\pa E_{h,j}\cap\pa B_r(x))&\le f_{h,j}'(r)\,,
    \\
    g_h(r)=\liminf_{j\to\infty}f_{h,j}'(r)&\le f_h'(r)\,,
    \\
    g(r)=\liminf_{h\in H_{x,r},h\to\infty}f_h'(r)&\le f'(r)\,,
  \end{split}
  \end{equation}
  for every $h,j\in\N$. We first prove \eqref{ias late 1}. Let $r\in I_x$ be such that \eqref{ias rect}, \eqref{ias needed for}, \eqref{ias needed for Ahj to zero} and \eqref{ias derivate} hold, and let $A_{h,j}$ denote an $\H^n$-maximal connected component of $\pa B_r(x)\setminus\pa E_{h,j}$.  If $A_{h,j}\subset E_{h,j}$, then, by spherical isoperimetry, by \eqref{ias derivate}, and since the relative boundary to $A_{h,j}$ in $\pa B_r(x)$ is contained in $\pa B_r(x)\cap\pa E_{h,j}$, we find
  \begin{eqnarray*}
  f_{h,j}'(r)\ge c(n)\,\H^n(\pa B_r(x)\setminus A_{h,j})^{(n-1)/n}\,,
  \end{eqnarray*}
  where the lower bound converges to $c(n)\,r^{n-1}$ if we let first $j\to\infty$ and then $h\to\infty$ thanks to \eqref{ias needed for Ahj to zero}; hence, if $A_{h,j}\subset E_{h,j}$, the first alternative in \eqref{ias late 1} holds. We now assume that $A_{h,j}\cap E_{h,j}=\emptyset$, and consider the corresponding cup competitor $F_{h,j}$ as defined in Lemma \ref{lemma cup competitor first kind} starting from $E_{h,j}$, $A_{h,j}$. More precisely, if $\{\eta^{h,j}_k\}_{k=1}^\infty$ denotes the corresponding sequence as in \eqref{fix:coarea trick}, we choose $k(h,j)$ so that, setting
  \begin{eqnarray*}
  Y_{h,j} &=& \pa B_r(x) \setminus \cl ((E_{h,j} \cap \pa B_r (x)) \cup A_{h,j})\,, \\
 S_{h,j} &=& \pa E_{h,j} \cap \cl (A_{h,j}) \setminus \left( \cl ((E_{h,j} \cap \pa B_r(x)) \cup Y_{h,j}) \right)\,,
  \end{eqnarray*}
  we have that $\eta_j = \eta^{h,j}_{k(h,j)}$ satisfies $\eta_j\le r/2j$, with
  \begin{eqnarray}
  \label{sequence2}
  &&\H^n (\pa B_r (x) \cap \{{\rm d}_{S_{h,j}} \leq \eta_j\}) \leq \frac1{j}\,,
  \\\label{sequence3}
  &&\eta_j \,\H^{n-1} (\pa B_r (x) \cap \{{\rm d}_{S_{h,j}} = \eta_j\}) \leq\frac{1}j\,.
  \end{eqnarray}
  Then, with the usual notation
  \begin{eqnarray*}
 U_{h,j} = \pa B_r (x) \cap \{{\rm d}_{S_{h,j}} < \eta_j\}\,, &\qquad&  Z_{h,j} = Y_{h,j} \cup \left( U_{h,j} \setminus \cl (E_{h,j}\cap \pa B_r (x)) \right)\,,
\end{eqnarray*}
  we define
  \[
  F_{h,j}=\big(E_{h,j}\setminus\cl(B_r(x))\big)\cup N_{\eta_j}(Z_{h,j})\,.
  \]
  By Lemma \ref{lemma cup competitor first kind}, $F_{h,j}\in\E$, $\Om\cap\pa F_{h,j}$ is $\C$-spanning $W$ and $E_{h,j}\Delta F_{h,j}\subset \cl(B_r(x))$. Since $\eta_j\to 0$ as $j\to\infty$, we find
  \[
  \s_h=\lim_{j\to\infty}|F_{h,j}|=\lim_{j\to\infty}|E_{h,j}\setminus B_r(x)|=|E_h\setminus B_r(x)|\,,
  \]
  so that $\s_h>0$ if $h\in H_{x,r}$, and $\s_h\to 0^+$ if we let $h\to\infty$. Thus $F_{h,j}$ satisfies \eqref{ias sh to zero}, and we can apply \eqref{ias7} to $F_{h,j}$. To estimate the upper bound in \eqref{ias7}, we look back at \eqref{cup buccia eta level}, \eqref{fix rect 1}, \eqref{fix:est2}, and \eqref{fix:est3}, and find that
  \begin{equation}\label{ias festa}
  \begin{split}
  \H^n(\cl(B_r(x))\cap\pa F_{h,j})&\le(2+C(n)\,\eta_j)\,\H^n(\pa B_r(x)\setminus A_{h,j})
  \\
&+ (2 + C(n)\,\eta_j)\, \H^n (\pa B_r (x) \cap \{{\rm d}_{S_{h,j}} \leq  \eta_j\}) \\
  &+C(n)\,\eta_j\,\left(\H^{n-1}(\pa B_r(x)\cap\pa E_{h,j}) + \H^{n-1}(\pa B_r(x) \cap \{{\rm d}_{S_{h,j}} = \eta_j\})\right)\,.
  \end{split}
  \end{equation}
  By \eqref{ias7}, \eqref{sequence2}, \eqref{sequence3}, and \eqref{ias festa} we deduce that
  \begin{eqnarray}\nonumber
     f(r)=\mu(B_r(x))&\le&\liminf_{h\in H_{x,r}\,,h\to\infty}\liminf_{j\to\infty}\H^n\big(\cl(B_r(x))\cap\pa F_{h,j}\big)
     \\\label{ias played by}
     &\le&\liminf_{h\in H_{x,r}\,,h\to\infty}\liminf_{j\to\infty}
     2\,\H^n(\pa B_r(x)\setminus A_{h,j})
     \\\nonumber
     &\le&C(n)\,\liminf_{h\in H_{x,r}\,,h\to\infty}\liminf_{j\to\infty}
     f_{h,j}'(r)^{n/(n-1)}\le C(n)\,f'(r)^{n/(n-1)}\,,
  \end{eqnarray}
  We have thus proved that the second alternative in \eqref{ias late 1} holds, as claimed. We now prove \eqref{ias9}: let us now denote by $F_{h,j}$ the set defined by Lemma \ref{lemma cone competitor} as approximation of the cone competitor corresponding to $E_{h,j}$ in $B_r(x)$ with $\eta = \eta_j = r/2j$. We have that $F_{h,j} \in \E$ and that $\Om \cap \pa F_{h,j}$ is $\C$-spanning $W$; furthermore,
  by \eqref{cone competitor volume inequality} and \eqref{ias needed for} we find
  \[
  \s_h=\lim_{j\to\infty}|F_{h,j}|\geq|E_h\setminus B_r(x)|+\frac{r}{n+1}\,\H^n(E_h\cap \pa B_r(x))
  \]
  (in particular, $\s_h>0$ if $h\in H_{x,r}$) and, by \eqref{ias needed for Ahj to zero}, $\s_h\to 0^+$ as $h\to\infty$. Thus \eqref{ias sh to zero} holds, and we can deduce from \eqref{ias7} and \eqref{cone competitor area inequality} that
  \[
  f(r)=\mu(B_r(x))\le\liminf_{h\in H_{x,r}\,,h\to\infty}\liminf_{j\to\infty}\frac{r}n\,\H^{n-1}(\pa E_{h,j}\cap\pa B_r(x))\le \frac{r}n\,f'(r)\,,
  \]
  that is \eqref{ias9}.

  \medskip

  \noindent {\it Step four}: We now define a function $g:\Om\to(0,\infty)\cup\{-\infty\}$ by letting
  \begin{eqnarray*}
  g(x)&=&\sup\Big\{s>0:(0,s)\subset I_x\Big\}
  \\
  &=&\sup\Big\{t>0:\mbox{if $s<t$, then $|E_h\setminus B_s(x)|>0$ for infinitely many $h$}\Big\}\,.
  \end{eqnarray*}
  We notice that
  \begin{eqnarray}
    \label{g 1}
    &&\mbox{$g$ is lower semicontinuous on $\Om$}\,,
    \\
    \label{g 2}
    &&\mbox{$\{g=-\infty\}$ contains at most one point}\,.
  \end{eqnarray}
  (Notice that $\{g=-\infty\}$ may indeed contain one point: this is the case of the singular point of a triple junction, see Figure \ref{fig example23}-(b)). To prove \eqref{g 1}: if $g(x)\ne-\infty$, then $g(x)>0$, and for every $s\in(0,g(x))$, $|E_h\setminus B_s(x)|>0$ for infinitely many $h$. Thus, if $\eta\in(0,g(x))$ and $m_\eta$ is such that $|x-x_m|<\eta$ for every $m\ge m_\eta$, then, for every $m\ge m_\eta$ and $s\in(0,g(x)-\eta)$,
  \[
  |E_h\setminus B_s(x_m)|\ge |E_h\setminus B_{s+\eta}(x)|>0\,,\qquad\mbox{for intinitely many $h$}\,,
  \]
  that is $g(x)-\eta\le g(x_m)$ for every $m\ge m_\eta$; this proves \eqref{g 1}. Next, if $g(x_1)=g(x_2)=-\infty$, then for every $s>0$ there exists $h(s)$ such that
  \[
  |E_h\setminus B_s(x_1)|=  |E_h\setminus B_s(x_2)|=0\qquad\forall h\ge h(s)\,.
  \]
  If $x_1\ne x_2$ we can take $s=|x_1-x_2|/2$ and deduce $|E_h|=0$; thus \eqref{g 2} holds. Let us now consider the open set $\{g>s\}\subset\Om$, $s>0$, and set
  \[
  Z(s)=\spt\mu\cap\{g>s\}\,,\qquad Z=\spt\mu\cap\{g>0\}\,.
  \]
  We claim that if $x\in Z(s)$, then
  \begin{equation}
    \label{ias replaced}
      f(r)\ge c_0(n)\,r^n\qquad\forall r\in(0,s)\,,\qquad\mbox{$r^{-n}\,f(r)$ is increasing over $r\in(0,s)$}\,.
  \end{equation}
  The second assertion is immediate from \eqref{ias9}. To prove the first one, set
  \[
  L_1=\big\{r\in(0,s):f'(r)\ge c(n)\,r^{n-1}\big\}\,,\qquad L_2=(0,s)\setminus L_1\,,
  \]
  with $c(n)$ as in \eqref{ias late 1}. If $x\in Z(s)$ is such that $\H^1(L_1)\ge s/2$, then for every $r\in(0,s)$
  \begin{eqnarray*}
    f(r)\ge\int_{L_1\cap(0,r)}f'\ge c(n) \int_{L_1\cap(0,r)}t^{n-1}\,dt\ge c(n)\,\int_0^{\min\{r,s/2\}}\,t^{n-1}\,dt\ge \frac{c(n)}{n\,2^n}\,r^n\,;
  \end{eqnarray*}
  if instead $\H^1(L_2)\ge s/2$, then for every $r\in(0,s)$,
  \begin{eqnarray*}
    f(r)^{1/n}\ge\int_{L_2\cap(0,r)}(f^{1/n})'\ge c(n)\, \H^1(L_2\cap(0,r))\ge c(n)\,\min\big\{r,\frac{s}2\big\}\ge\frac{c(n)}2\,r\,,
  \end{eqnarray*}
  where we have used the fact that, by \eqref{ias late 1}, we have $(f^{1/n})'\ge c(n)$ on $L_2$. Thanks to \eqref{ias replaced} we are in the position of using \cite[Theorem 6.9]{mattila} and Preiss' theorem (as done in step four of the proof of Theorem \ref{thm lsc}) on each $Z(s)$, to find that $Z$ is $\H^n$-rectifiable with
  \begin{equation}
    \label{ias101}
      \mu\llcorner Z=\theta\,\H^n\llcorner Z\,,
  \end{equation}
  where the density
  \[
  \theta(x)=\lim_{r\to 0^+}\frac{\mu(B_r(x))}{\om_n\,r^n}\quad\mbox{exists in $[c_0(n),\infty)$ for every $x\in Z$}\,.
  \]
  Moreover, by \eqref{g 2},
  \begin{equation}
    \label{ias10}
    \H^0(\spt\mu\setminus Z)\le 1\,.
  \end{equation}
 By combining \eqref{ias101} and \eqref{ias10} we find that $K=\spt\mu$ is $\H^n$-rectifiable and such that $\mu=\theta\,\H^n\llcorner K$. Since $K_h=\spt\,\mu_h$ is $\C$-spanning $W$ and $\mu_h\weakstar\mu$, by Lemma \ref{statement K spans} we find that $K$ is $\C$-spanning $W$, and thus  admissible in $\ell$, so that
  \begin{equation}
    \label{ias12}
      \liminf_{\e\to 0^+}\psi(\e)
  =\lim_{h\to\infty}\mu_h(\Om)\ge\mu(\Om)=\int_K\,\theta\,d\H^n\ge\,\min_K\,\theta\,\H^n(K)\ge\,\ell\,\min_K\,\theta\,.
  \end{equation}
  Thus, to complete the proof of \eqref{psi eps lb 2 ell} we just need to show that
  \begin{equation}
    \label{ias11}
    \mbox{$\theta\ge2$ $\H^n$-a.e. on $K$}\,.
  \end{equation}
  Since $\mu_{h,j}=\H^n\llcorner(\Om\cap\pa E_{h,j})\weakstar\mu_h$ as $j\to\infty$, with $\mu_h\weakstar \theta\,\H^n\llcorner K$ as $h\to\infty$, we can extract a diagonal subsequence $j=j(h)$ so that, denoting $E_h^* = E_{h,j(h)}$, $\{E_h^*\}_h\subset\E$, $\Om\cap\pa E_h^*$ $\C$-spanning $W$, and
  \[
  \mu_h^*=\H^n\llcorner(\Om\cap\pa E_h^*)\weakstar \theta\,\H^n\llcorner K\,,\qquad\mbox{as $h\to\infty$}\,.
  \]
  Moreover, $\mu(B_r(x))\ge c(n)\,r^n$ for every $r\in(0,s)$ if $x\in K\cap\{g>s\}$ and, thanks to \eqref{ias festa},
  \[
  \liminf_{h\to\infty}\H^n(B_r(x)\cap\pa E_h^*)\le C(n)\,\liminf_{h\to\infty}\,\H^n(\pa B_r(x)\setminus A_{r,h}^0)\,,
  \]
  where $A_{r,h}^0$ denotes an $\H^n$-maximal connected component of $\pa B_r(x)\setminus\pa E_h^*$, this time for every $x\in K$ and $B_r(x)\cc\Om$. We can thus apply Lemma \ref{lemma llb} with the open set $\Om'=\{g>s\}$ to deduce that
  \[
  \mbox{$\theta\ge 2$ $\H^n$-a.e. on $\{g>s\}\cap K\setminus \pa^*E^*$}
  \]
  where $E^*=\emptyset$ is the $L^1$-limit of the sets $E_h^*$. Since $\pa^*E^*=\emptyset$, taking the union over $s>0$ and recalling \eqref{ias10}, we conclude that \eqref{ias11} holds.

  \medskip

  \noindent {\it Step five}: Now that $\psi(\e)\to 2\,\ell$ as $\e\to 0^+$ has been proved, let $(K_h,E_h)$ be a sequence of generalized minimizers of $\psi(\e_h)$ for an arbitrary sequence $\e_h\to 0^+$. Since the limit of $\psi(\e)$ as $\e\to0^+$ exists, $\e_h$ automatically satisfies \eqref{ias1}, and the arguments of step two to four can be repeated {\it verbatim}. Correspondingly, up to extracting subsequences, \eqref{ias def muh} holds with $\mu=\theta\,\H^n\llcorner K$, $\theta\ge 2$ $\H^n$-a.e. on $K$, and $K$ a relatively compact subset of $\Om$, $\H^n$-rectifiable, and $\C$-spanning $W$. By plugging $\psi(\e)\to 2\,\ell$ as $\e\to 0^+$ in \eqref{ias12}, we find that $\theta=2$ $\H^n$-a.e. on $K$, $2\,\H^n(K)=2\,\ell$, so that $K$ is a minimizer of $\ell$, and thus, looking back at \eqref{ias def muh}, we conclude that \eqref{weak star convergence of gen minimizers} holds.
  \end{proof}

\appendix

\section{A technical fact on sets of finite perimeter}

\begin{lemma}\label{lemma redb}
  If $\Om$ is an open set in $\R^{n+1}$, $E$ is a set of finite perimeter in $\Om$, and $f:\Om\to\Om$ is a diffeomorphism, then $f(E)$ is a set of finite perimeter in $\Om$ with $\pa^*f(E)=f(\pa^*E)$ and
  \begin{equation}
    \label{redb 0}
      \nu_{f(E)}(f(x))=\frac{(\nabla g(f(x)))^T\nu_E(x)}{|(\nabla g(f(x)))^T\nu_E(x)|}\qquad\forall x\in\pa^*E\,,
  \end{equation}
  where $g=f^{-1}$.
\end{lemma}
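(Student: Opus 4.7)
The plan is to argue through the distributional definition of perimeter together with a blow-up analysis. First I would show that $f(E)$ is a set of finite perimeter in $\Om$. For $X\in C^1_c(\Om;\R^{n+1})$, the change of variables $y=f(x)$ yields
\[
\int_{f(E)}\Div X\,dy\,=\,\int_E (\Div X)\circ f\,\,|Jf|\,dx\,,
\]
where $Jf=\det\nabla f$ carries a global sign since $f$ is a diffeomorphism. Introducing the Piola transform
\[
Y_X(x)\,:=\,|Jf(x)|\,(\nabla f(x))^{-1}\,X(f(x))\,,
\]
the classical Piola identity $\Div_x(\mathrm{cof}(\nabla f)^T)=0$ (applied row by row) gives $\Div_xY_X(x)=|Jf(x)|\,(\Div X)(f(x))$. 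Since $Y_X\in C^1_c(\Om;\R^{n+1})$ and $\|Y_X\|_\infty\le C\|X\|_\infty$ with $C$ depending on $f$ alone, the finite perimeter of $E$ gives
\[
\int_{f(E)}\Div X\,dy\,=\,\int_E\Div_xY_X\,dx\,=\,\int_{\pa^*E}Y_X\cdot\nu_E\,d\H^n\,,
\]
whose modulus is bounded by $C\,\|X\|_\infty\,\H^n(\pa^*E)<\infty$. Therefore $|\mu_{f(E)}|(\Om)<\infty$, i.e. $f(E)$ has finite perimeter.

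Next I would identify $\pa^*f(E)$ and the formula \eqref{redb 0} through a blow-up argument. Fix $x_0\in\pa^*E$ with outer normal $\nu_E:=\nu_E(x_0)$, so that $(E-x_0)/r\to H_E:=\{z:z\cdot\nu_E\le0\}$ in $L^1_{\loc}(\R^{n+1})$ as $r\to0^+$. The first-order expansion $f(x_0+rz)=f(x_0)+r\,\nabla f(x_0)z+{\rm o}(r)$, uniform on compact sets in $z$, combined with the invertibility of $\nabla f(x_0)$, implies that $(f(E)-f(x_0))/r\to \nabla f(x_0)(H_E)$ in $L^1_{\loc}$. A direct computation gives
\[
\nabla f(x_0)(H_E)\,=\,\Big\{w\in\R^{n+1}\,:\,w\cdot(\nabla f(x_0))^{-T}\nu_E\le0\Big\}\,,
\]
which is again a half-space. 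By the blow-up characterization of the reduced boundary (see e.g. \cite[Corollary 15.8]{maggiBOOK}), this implies $f(x_0)\in\pa^*f(E)$ with outer normal proportional to $(\nabla f(x_0))^{-T}\nu_E$; differentiating $g\circ f=\id$ gives $\nabla g(f(x_0))=(\nabla f(x_0))^{-1}$, so that $(\nabla f(x_0))^{-T}=(\nabla g(f(x_0)))^T$, and \eqref{redb 0} follows. In particular, $f(\pa^*E)\subset\pa^*f(E)$.

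Finally, the symmetric application of the same argument to the diffeomorphism $g=f^{-1}$ and to the set $f(E)$ yields $g(\pa^*f(E))\subset\pa^*(g(f(E)))=\pa^*E$, i.e. $\pa^*f(E)\subset f(\pa^*E)$. This gives the equality $\pa^*f(E)=f(\pa^*E)$ and completes the proof. No step presents a genuine obstacle: the two technical points requiring routine care are the verification of the Piola identity (which is why we tacitly assume enough regularity on $f$, or argue by approximation from $C^2$ diffeomorphisms) and the uniform-on-compacta Taylor expansion needed for the blow-up, both of which are classical.
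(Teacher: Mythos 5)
Your Piola-transform derivation of finite perimeter is fine and parallels the paper's citation of Proposition 17.1 in Maggi's book, and the computation $(f(E)-f(x_0))/r\to\nabla f(x_0)(H_E)$ in $L^1_{\loc}$ is also correct. The gap is the step ``by the blow-up characterization of the reduced boundary, this implies $f(x_0)\in\pa^*f(E)$'': this converse of De Giorgi's blow-up theorem is false in general (and is not what Corollary 15.8 of Maggi's book asserts). The $L^1_{\loc}$ convergence $(F-y)/r\to H$ does pin down $\mu_F(B_r(y))/r^n\to\om_n\nu_H$, but by itself yields only the one-sided density bound $\liminf_{r\to 0^+}|\mu_F|(B_r(y))/r^n\ge\om_n$ from lower semicontinuity; if the corresponding $\limsup$ is strictly larger, the ratio $\mu_F(B_r(y))/|\mu_F|(B_r(y))$ cannot converge to a unit vector and $y\notin\pa^*F$. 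This actually happens. In $\R^2$, set $H=\{y_2<0\}$, remove from $H$ the pairwise disjoint rectangles $G_k=[2^{-k},2^{-k}+2^{-2k}]\times[-2^{-k},0]$, $k\ge1$, and let $F=H\setminus\bigcup_kG_k$; then $F$ has finite perimeter, $|(F\,\Delta\,H)\cap B_{Rr}|\le C\,(Rr)^3$ gives $(F-0)/r\to H$ in $L^1_{\loc}$, and $\mu_F(B_r(0))=(0,2r)(1+{\rm o}(1))$, yet $|\mu_F|(B_r(0))\ge 2r+2\sum_{k:G_k\subset B_r}2^{-k}\ge(2+\sqrt2)\,r\,(1-{\rm o}(1))$, so the ratio has modulus bounded away from $1$ and $0\notin\pa^*F$. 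Closing the gap therefore requires a sharp density estimate for $|\mu_{f(E)}|$ at $f(x_0)$, and producing it is exactly what the paper's proof does: it works directly from the pushforward formula $\mu_{f(E)}=f_\#\big(Jf\,(\nabla g\circ f)^T\mu_E\big)$, studies $\mu_{f(E)}(B_r(f(x)))/|\mu_{f(E)}|(B_r(f(x)))$ for the non-ball preimages $g(B_r(f(x)))$, and the key technical point, absent from your proposal, is that the ellipsoidal shell $L(\pa B_1(0))$, $L=\nabla g(f(x))$, is transversal to the tangent plane $T_x(\pa^*E)$ and so meets it in an $\H^n$-null set, which is what makes the error terms negligible. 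Supplying the missing density estimate essentially leads you back to the paper's computation, so the two arguments are not genuinely independent.
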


\begin{proof}
  In \cite[Proposition 17.1, Remark 17.2]{maggiBOOK} it is shown that $f(E)$ is a set of finite perimeter with
  \[
  \mu_{f(E)}=f_\#\Big(Jf\,(\nabla g(f))^T\,\mu_E\Big)\,.
  \]
  and that mapping by $f$ preserves essential boundaries (thus just the $\H^n$-equivalence of $\pa^*f(E)$ and $f(\pa^*E)$ is deduced there). In order to prove $\pa^*f(E)=f(\pa^*E)$, we pick a ball $B_r(f(x))\cc\Om$, and look at
  \begin{eqnarray}\nonumber
    \frac{\mu_{f(E)}(B_r(f(x)))}{|\mu_{f(E)}|(B_r(f(x)))}&=&
    \frac{\int_{g(B_r(f(x)))\cap\pa^*E}Jf\,\nabla g(f)^T\nu_E\,d\H^n}
    {\int_{g(B_r(f(x)))\cap\pa^*E}Jf\,|\nabla g(f)^T\nu_E|\,d\H^n}
    \\
    &=&\label{redb 1}
    \frac{\int_{(\pa^*E-x)/r}u_r(z)\nu_E(x+r\,z)\,d\H^n_z}
    {\int_{(\pa^*E-x)/r}|u_r(z)\,\nu_E(x+r\,z)|\,d\H^n_z}\,.
  \end{eqnarray}
  where we have set
  \[
  F_r=\frac{g(B_r(f(x)))-x}r\,,\qquad u_r(z)=1_{F_r}(z)\,Jf(x+r\,z)\,\nabla g(f(x+r\,z))^T\,.
  \]
  If we set $F=L(B_1(0))$ for the linear map $L=\nabla g(f(x))$, then for every $\e>0$ we have
  \begin{eqnarray*}
  L(B_{1-\e}(0))\subset F_r\subset L(B_{1+\e}(0))\qquad\mbox{provided $r$ is small enough}\,,
  \end{eqnarray*}
  and thus, as $r\to 0^+$,
  \[
  1_{F_r}\to 1_F\qquad \mbox{uniformly on $\R^{n+1}\setminus X_\e$}\,,
  \]
  where we have set
  \[
  X_\e= L(B_{1+\e}(0)\setminus B_{1-\e}(0))\,.
  \]
  Since $F_r,F\subset B_{\Lip g}(0)$, and since for every $R>0$
  \[
  Jf(x+r\,z)\,\nabla g(f(x+r\,z))^T\to Jf(x)\,\nabla g(f(x))^T\qquad\mbox{uniformly on $|z|\le R$}\,,
  \]
  as $r\to 0^+$, we conclude that
  \begin{equation}
    \label{lemma pf uniform ur to u}
      u_r(z)\to u(z):=1_F(z)\,Jf(x)\,\nabla g(f(x))^T\qquad\mbox{uniformly on $\R^{n+1}\setminus X_\e$}\,.
  \end{equation}
  We now decompose the integrals over $(\pa^*E-x)/r$ appearing in \eqref{redb 1} through $X_\e$. By \eqref{lemma pf uniform ur to u},
  \begin{eqnarray*}
    &&\Big|\int_{[(\pa^*E-x)/r]\setminus X_\e}u_r(z)\nu_E(x+r\,z)\,d\H^n_z-\int_{[(\pa^*E-x)/r]\setminus X_\e}u(z)\nu_E(x+r\,z)\,d\H^n_z\Big|
    \\
    \\
    &\le&\om(r)\,\H^n(B_{\Lip g}(0)\cap[(\pa^*E-x)/r]\setminus X_\e)
    \\
    &\le&\om(r)\,P(E;B_{r\,\Lip g}(x))\to 0
  \end{eqnarray*}
  as $r\to 0^+$, while $x\in\pa^*E$ gives
  \[
  \lim_{r\to 0^+}\int_{[(\pa^*E-x)/r]\setminus X_\e}u(z)\nu_E(x+r\,z)\,d\H^n_z
  =\int_{T_x(\pa^*E)\setminus X_\e}u(z)\nu_E(x)\,d\H^n_z\,.
  \]
  At the same time, since $|u_r|\le C$ for a constant $C$ independent of $r$, we have
  \[
  \Big|\int_{X_\e\cap[(\pa^*E-x)/r]}u_r(z)\nu_E(x+r\,z)\,d\H^n_z\Big|\le C\,\H^n\Big(X_\e\cap\frac{\pa^*E-x}r\Big)\to C\,\H^n(X_\e\cap T_x(\pa^*E))
  \]
  as $r\to 0^+$. Combining the above estimates with $|u|\le C$ we finally find
  \begin{eqnarray*}
    &&\limsup_{r\to 0^+}\Big|\int_{(\pa^*E-x)/r}u_r(z)\nu_E(x+r\,z)\,d\H^n_z-\int_{T_x(\pa^*E)}u(z)\nu_E(x)\,d\H^n_z\Big|
    \\
    &\le&C\,\H^n(X_\e\cap T_x(\pa^*E))\,,\qquad\forall\e>0\,.
  \end{eqnarray*}
  Letting $\e\to 0^+$, we find $\H^n(X_\e\cap T_x(\pa^*E))\to \H^n(X\cap T_x(\pa^*E))$ where
  \[
  X=L(\pa B_1(0))\,.
  \]
  Since $L$ is invertible, $L(\pa B_1(0))$ intersects transversally any plane through the origin, and in particular $T_x(\pa^*E)$. Therefore $\H^n(X\cap T_x(\pa^*E))=0$ and we have proved
  \begin{eqnarray*}
  \lim_{r\to 0^+}\int_{(\pa^*E-x)/r}u_r(z)\nu_E(x+r\,z)\,d\H^n_z&=&\int_{T_x(\pa^*E)}u(z)\,\nu_E(x)\,d\H^n_z
  \\
  &=&Jf(x)\,L^T\,\nu_E(x)\,\H^n(F\cap T_x(\pa^*E))\,.
  \end{eqnarray*}
  An analogous argument shows
  \[
  \lim_{r\to 0^+}\int_{(\pa^*E-x)/r}|u_r(z)\nu_E(x+r\,z)|\,d\H^n_z
  =Jf(x)\,|L^T\,\nu_E(x)|\,\H^n(F\cap T_x(\pa^*E))\,,
  \]
  and finally we conclude that if $x\in\pa^*E$, then
  \begin{eqnarray*}
    \lim_{r\to 0^+}\frac{\mu_{f(E)}(B_r(f(x)))}{|\mu_{f(E)}|(B_r(f(x)))}=\frac{L^T\,\nu_E(x)}{|L^T\,\nu_E(x)|}\in\SS^n\,.
  \end{eqnarray*}
  In particular, $f(x)\in\pa^*f(E)$ and \eqref{redb 0} holds.
\end{proof}

\section{Boundary density estimates for the Harrison--Pugh minimizers}\label{appendix hp lower density} In this appendix we prove that when $\pa W$ is smooth and $\ell<\infty$, then every minimizer $S$ of $\ell$ satisfies uniform lower density estimates up to the boundary of $\Omega$.

\begin{theorem}\label{thm density for S fine}
  If $\ell<\infty$, $\pa W$ is smooth, and $S$ is a minimizer of $\ell$, then
  \begin{equation}
    \label{hp S lower density estimates}
    \H^n(B_r(x)\cap S)\ge c(n)\,r^n\,,\qquad\forall x\in\cl(S)\,,r\in(0,r_0)\,,
  \end{equation}
  for a value of $r_0$ depending on $W$.
\end{theorem}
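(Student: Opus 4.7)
The proof naturally splits along $\dist(x,\pa W)$. If $\dist(x,\pa W) \ge r$, the estimate is interior in nature and follows from the density lower bounds for Harrison--Pugh minimizers in \cite{DLGM} (established for $x \in S$, then extended to $x \in \cl(S)$ by approximation). If $\dist(x,\pa W) < r$, choose a nearest point $y \in \pa W$: then $B_{2r}(y) \supset B_r(x)$, and the estimate at $x$ reduces, up to a loss of factor $2^n$, to the same estimate at $y \in \pa W$ at scale $2r$. Hence the heart of the theorem is to prove $\H^n(S \cap B_r(x)) \ge c(n)\,r^n$ for $x \in \cl(S) \cap \pa W$ and $r < r_0(W)$.

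The strategy for the boundary case is to locally flatten $\pa W$ to a hyperplane by a bi-Lipschitz homeomorphism with distortion $1+O(r)$, and then rerun the cone-and-cup-competitor argument of \cite{DLGM} in the resulting half-ball. Smoothness of $\pa W$ gives $r_0 = r_0(W)$ such that, for every $x \in \pa W$ and $r < r_0$, $\pa W \cap B_{4r}(x)$ is a $C^1$-graph over $T_x\pa W$ with $C^1$-norm $\le C\,r$; a standard construction produces a homeomorphism $\Psi:\R^{n+1}\to\R^{n+1}$, equal to $\id$ outside $B_{4r}(x)$, straightening $\pa W \cap B_{2r}(x)$ onto a portion of $T_x\pa W$, and satisfying $\Lip(\Psi),\,\Lip(\Psi^{-1}) \le 1 + C\,r$. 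By Lemma \ref{lemma close by Lipschitz at boundary}, $\Psi(S)$ is $\C_\Psi$-spanning $\Psi(W)$, and is a $(1+Cr)^n$-quasi-minimizer in the straightened geometry. Since this factor tends to $1$ as $r \to 0^+$, it suffices to prove the density lower bound in the model problem where $\pa W$ is exactly flat near $x$; the general case then follows at the cost of a constant absorbable into the choice of $r_0$.

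In the flat model problem, where $\pa W \cap B_{2r}(x) \subset H$ for a hyperplane $H$ and $\Om \cap B_{2r}(x)$ is the corresponding open upper half-ball, the DLGM argument adapts cleanly to the half-ball setting. For each $\rho < r$, the cone from $x$ over $S \cap \pa B_\rho(x)$ lies in $\cl(\Om \cap B_\rho(x))$, because the half-ball is star-shaped with respect to $x \in H$; comparison with this cone yields $\H^n(S \cap B_\rho(x)) \le (\rho/n)\,\H^{n-1}(S \cap \pa B_\rho(x))$ and hence the monotonicity of $\rho^{-n}\H^n(S \cap B_\rho(x))$. Dually, the cup competitor built in $B_\rho(x)$ from an $\H^n$-maximal connected component $A$ of $(\pa B_\rho(x) \cap \cl(\Om)) \setminus S$ is supported in the closed half-ball, and, combined with the spherical isoperimetric inequality on the hemisphere (with constants depending only on $n$), yields $f(\rho) \le C(n)\,f'(\rho)^{n/(n-1)}$ for $f(\rho)=\H^n(S\cap B_\rho(x))$; integrating gives the desired $f(r) \ge c(n)\,r^n$.

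The main obstacle will be verifying that these half-ball competitors remain $\C$-spanning $W$. The full-ball proofs in \cite{DLGM} invoke Lemma \ref{lemma 10}, whose hypothesis $B_\rho(x) \cc \Om$ fails at a boundary point; one must instead appeal to the boundary version of that characterization established within step one of the proof of Lemma \ref{lemma close by Lipschitz at boundary}, and verify that any $\g \in \C$ entering $B_\rho(x)$ but missing $S$ outside $B_\rho(x)$ necessarily crosses the hemisphere $\pa B_\rho(x) \cap \Om$ at two endpoints lying in distinct connected components of $\cl(\Om \cap B_\rho(x)) \setminus S$, in a way preserved by the cone and cup substitutions. This is a technically delicate but essentially routine adaptation of the proofs of Lemmas \ref{lemma cup competitor first kind} and \ref{lemma cone competitor}, with the sphere replaced by the closed hemisphere throughout.
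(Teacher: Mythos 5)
Your route is genuinely different from the paper's. The paper constructs no boundary competitors at all: it first upgrades minimality to stationarity up to $\pa\Om$ with the free-boundary (Young's law) condition, i.e. $\int_S\Div^SX\,d\H^n=0$ for every $X$ tangential to $\pa\Om$, and then invokes the Gr\"uter--Jost/Kagaya--Tonegawa \emph{reflected} monotonicity formula, in which the monotone quantity is $r^{-n}\big(\H^n(S\cap B_r(x))+\H^n(S\cap\tilde B_r(x))\big)e^{Cr}$ with $\tilde B_r(x)$ a nonlinear reflection of $B_r(x)$ across $\pa W$. The boundary bound then follows from the interior bound (density $\ge1$ at points of $S$), upper semicontinuity of the reflected density on $\cl(S)\cap I_{r_0/6}(\pa W)$, and the inclusion $\tilde B_{r/5}(x)\subset B_r(x)$ when $\dist(x,\pa W)\le r/5$. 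Your plan --- flatten $\pa W$ and rerun the cup/cone comparison of \cite{DLGM} in half-balls --- is a conceivable alternative, but as written it has two genuine gaps.

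First, the reduction in your opening paragraph is backwards: $B_r(x)\subset B_{2r}(y)$ gives $\H^n(S\cap B_r(x))\le\H^n(S\cap B_{2r}(y))$, which is useless for a lower bound; you would need a ball centered at $y$ \emph{contained in} $B_r(x)$. Worse, the nearest point $y\in\pa W$ need not belong to $\cl(S)$, so ``the same estimate at $y$'' is not available: the differential inequality $f\le C(f')^{n/(n-1)}$ only integrates to $f(r)\ge c\,r^n$ if $f>0$ on all of $(0,r)$. You must therefore prove the boundary estimate for centers in $\cl(S)$ \emph{near} $\pa W$, not only on $\pa W$ --- precisely what the reflected monotonicity formula accomplishes and what your scheme would also have to do after repair. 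Second, the steps you defer as ``essentially routine'' are the actual content. Lemma \ref{lemma close by Lipschitz at boundary} does not justify the flattening: it keeps the pair $(W,\C)$ fixed and only pushes $\Om$ outward into a larger $\Om'$, whereas a straightening map $\Psi$ changes $W$, $\Om$ and $\C$ simultaneously; you would need to define the transported spanning class, show that the image of a minimizer is a quasi-minimizer for the transported problem, and establish a boundary analogue of Lemma \ref{lemma 10} (the characterization of spanning via arcs with endpoints in distinct components of $\Om\cap\cl(B_\rho(x))\setminus S$, proved in the paper only as step one of Lemma \ref{lemma close by Lipschitz at boundary}) in order to verify that the half-ball cup and cone replacements still span. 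Until those verifications are carried out the argument is incomplete, and the paper's variational/monotonicity route avoids them entirely.
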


\begin{proof}
  By Lemma \ref{lemma close by Lipschitz at boundary}, and since $S$ minimizes $\H^n$ with respect to every relatively closed subset of $\Om$ which is $\C$-spanning $W$, recall \eqref{plateau problem}, we have
  \begin{equation}
    \label{hp diffeo min}
    \H^n(S)\le \H^n(f(S))
  \end{equation}
  whenever $f:\cl(\Om)\to\cl(\Om)$ is a homeomorphism with $f(\pa\Om)=\pa\Om$, $\{f\ne\id\}\cc B_{r_0}(x)$ for $x\in\pa \Om$, and $f(B_{r_0}(x)\cap\cl(\Omega))=B_{r_0}(x)\cap\cl(\Omega)$ for $r_0$ depending on $W$. We immediately deduce from \eqref{hp diffeo min}, that
  \begin{equation}
    \label{hp S stazionario al bordo}
    \int_S\Div^SX\,d\H^n=0
  \end{equation}
  for every $X\in C^1_c(B_{r_0}(x);\R^{n+1})$ with $X\cdot\nu_\Om=0$ on $\pa\Om$. Since $S$ is an Almgren minimizer in $\Omega$,   \eqref{hp S stazionario al bordo} also holds for every $X\in C^1_c(\Om;\R^{n+1})$. Finally, we deduce the validity of \eqref{hp S stazionario al bordo} for every $X\in C^1_c(\R^{n+1};\R^{n+1})$ with $X\cdot\nu_\Om=0$ on $\pa\Om$ by a standard covering argument.

  \medskip

  The validity of \eqref{hp S stazionario al bordo} for every $X\in C^1_c(\R^{n+1};\R^{n+1})$ with $X\cdot\nu_\Om=0$ on $\pa\Om$ is a distributional formulation of Young's law, which has been extensively studied in the classical work of Gr\"uter and Jost \cite{gruterjost}, and has been recently extended to arbitrary contact angles by Kagaya and Tonegawa \cite{kagayatone}. The main consequence of \eqref{hp S stazionario al bordo} we shall need here is an adapted monotonicity formula which takes care of the local geometry of $\pa W$. We now introduce this tool and then complete the proof.

  \medskip

  Let $r_0$ be sufficiently small, so that $I_{r_0}(\pa W)$ admits a well-defined nearest point projection map $\Pi \colon I_{r_0}(\pa W) \to \pa W$ of class $C^1$. By \cite[Theorem 3.2]{kagayatone}, there exists a constant $C = C(n,r_0)$ such that for any $x \in I_{r_0/6}(\pa W) \cap {\rm cl}(\Om)$ the map
\begin{equation} \label{monotonicity_formula}
r \in \left( 0, r_0/6 \right) \mapsto \frac{\H^n(S\cap B_r(x)) +\H^n(S\cap \tilde B_r(x))}{\omega_n \, r^n} \,e^{C\,r}
\end{equation}
is increasing, where
\begin{equation} \label{reflection}
\tilde B_r(x) := \left\lbrace y \in \R^{n+1} \, \colon \, \tilde y \in B_r(x) \right\rbrace \,, \qquad \tilde y := \Pi(y) + (\Pi(y) - y)
\end{equation}
denotes a sort of nonlinear reflection of $B_r(x)$ across $\pa W$. In particular, the limit
\begin{equation} \label{boundary_density}
\s(x)=\lim_{r\to 0^+} \frac{\H^n(S\cap B_r(x)) +\H^n(S\cap \tilde B_r(x))}{\omega_n \, r^n}
\end{equation}
exists for every $x \in I_{r_0/6}(\pa W) \cap {\rm cl}(\Om)$ , and the map $x \mapsto \sigma(x)$ is upper semicontinuous in there; see \cite[Corollary 5.1]{kagayatone}.

 \medskip

 Next, we recall from \cite[Lemma 4.2]{kagayatone} a simple geometric fact: if $x \in I_{r_0}(\pa W)$, and $\rho > 0$ is such that $\dist(x,\pa W) \leq \rho$ and $B_{\rho}(x) \subset I_{r_0}(\pa W)$, then
\begin{equation} \label{ovvio2}
\tilde B_{\rho}(x) \subset B_{5\rho}(x)\,.
\end{equation}
We are now in the position to prove \eqref{hp S lower density estimates}. First of all we recall that, since $S$ defines a multiplicity one stationary varifold in $\Omega$, we have
\begin{equation} \label{interior_dlb}
\H^n(S\cap B_r(x))\ge \om_n\,r^n\,,\qquad\forall x\in S\,,B_r(x)\cc\Om\,.
\end{equation}
In particular, \eqref{hp S lower density estimates} holds with $c=\omega_n$ for all $x \in S \setminus I_{r_0/6}(\pa W)$ as soon as $r <r_0/6$. Therefore we can assume that
\begin{equation}
  \label{x once}
x \in \cl(S) \cap I_{r_0/6}(\pa W)\,.
\end{equation}
We first notice that we have $\s(x)\ge 1$: by upper semicontinuity of $\s$ on $\cl(S) \cap I_{r_0/6}(\pa W)$ we just need to show this when, in addition to \eqref{x once}, we have  $x\in S$, and indeed in this case,
\[
\s(x) \geq \lim_{\rho\to 0^+} \frac{\H^n(S\cap B_\rho(x))}{\omega_n\, \rho^n}\ge1
\]
thanks to \eqref{interior_dlb}; this proves $\s(x)\ge1$. Now we fix $r<5\,r_0/6$ and distinguish two cases depending on the validity of
\begin{equation}
  \label{x validity}
  \dist(x,\pa W)> \frac{r}5\,.
\end{equation}
If \eqref{x validity} holds, then by \eqref{interior_dlb}
\[
\H^n(S \cap B_r(x)) \geq \H^n(S \cap B_{r/5}(x)) \ge \omega_n \, \Big(\frac{r}{5} \Big)^n\,,
\]
thus proving \eqref{hp S lower density estimates}. If $\dist(x,\pa W)\le r/5$, then, thanks to the obvious inclusion $B_r(x) \subset I_{r_0}(\pa W)$,  we can apply \eqref{ovvio2} with $\rho=r/5$ to find $\tilde B_{r/5}(x) \subset B_r(x)$. In this way, by exploiting $\s(x)\ge1$ and \eqref{monotonicity_formula}, we get
\begin{eqnarray*}
  c(n)\,r^n&\le&\s(x)\,\om_n\,\Big(\frac{r}5\Big)^n
  \\
  &\le& \Big(\H^n(S\cap B_{r/5}(x)) +\H^n(S\cap \tilde B_{r/5}(x))\Big) \,e^{C\,r/5}
  \\
  &\le& 2\,\H^n(S\cap B_r(x)) \,e^{C\,r_0}\le 4\,\H^n(S\cap B_r(x))
\end{eqnarray*}
up to further decreasing $r_0$.
\end{proof}

\section{A classical variational argument}\label{memme} Let $(K,E)$ be a generalized minimizer of $\psi(\e)$. In Theorem \ref{thm basic regularity}, we have proved that if $f:\Om\to\Om$ is a diffeomorphism such that $|f(E)|=|E|$, then
 \begin{equation}
   \label{basic tesi app}
   \F(K,E)\le\F(f(K),f(E))\,.
 \end{equation}
Here we show how to deduce from \eqref{basic tesi app} the existence of $\l\in\R$ such that
  \begin{equation}
    \label{basic stationary main app}
    \l\,\int_{\pa^*E}X\cdot\nu_E\,d\H^n=\int_{\pa^*E}\Div^K\,X\,d\H^n+2\,\int_{K\setminus\pa^*E}\Div^K\,X\,d\H^n\,,
  \end{equation}
  for every $X\in C^1_c(\R^{n+1};\R^{n+1})$ with $X\cdot\nu_\Om=0$ on $\pa\Om$. This is proved following a classical argument, see e.g. \cite[Theorem 17.20]{maggiBOOK}. We first treat the case when we also have
  \begin{equation}
    \label{corso cv per m}
      \int_{\pa^*E}X\cdot\nu_E\, d\H^n=0\,.
  \end{equation}
  In this case, let $Y\in C^1_c(\Om;\R^{n+1})$ be such that
  \[
  \int_{\pa^*E}Y\cdot\nu_E\,d\H^n=1\,,
  \]
  and set
  \[
  f_{t,s}(x)=x+t\,X(x)+s\,Y(x)\,,\qquad x\in\Omega\,.
  \]
  Given that $X\cdot\nu_\Om=0$ on $\pa\Om$ and that $\pa\Om$ is smooth, it is easily seen that for $t$ and $s$ sufficiently small, $f_{t,s}$ is a diffeomorphism from $\Omega$ to $\Omega$. In particular, the map
  \[
  \vphi(t,s)=|f_{t,s}(E)|
  \]
  is such that $\vphi(0,0)=|E|$, $(\pa\vphi/\pa t)(0,0)=0$ by \eqref{corso cv per m} and $(\pa\vphi/\pa s)(0,0)=1$ by the assumption on $Y$, so that, by the implicit function theorem we have $\vphi(t,s(t))=|E|$ for every $t$ sufficiently small and for $s(t)={\rm O}(t^2)$. Setting $g_t=f_{t,s(t)}$, by \eqref{basic tesi app}, we find that
  \begin{eqnarray*}
  m(t)=2\,\H^n(g_t(K)\setminus \pa^* g_t(E))+\H^n(\Om\cap \pa^*g_t(E))
  \end{eqnarray*}
  has a minimum at $t=0$. By Lemma \ref{lemma redb}, we can write
  \[
  m(t)=2\,\H^n(g_t(K\setminus\pa^*E))+\H^n(g_t(\Om\cap \pa^* E))\,.
  \]
  By the area formula, and since $s(t)={\rm O}(t^2)$ gives $(\pa g_t/\pa t)|_{t=0}=X$, we deduce the validity of \eqref{basic stationary main app} when \eqref{corso cv per m} holds. Let us now consider two fields $X_k\in C^1_c(\R^{n+1};\R^{n+1})$, $k=1,2$, with $X_k\cdot\nu_\Om=0$ on $\pa\Om$ and set
  \[
  X=X_1-\frac{\int_{\pa^*E}X_1\cdot\nu_E\,d\H^n}{\int_{\pa^*E}X_2\cdot\nu_E\,d\H^n}\,X_2\,.
  \]
  In this way $X$ satisfies \eqref{corso cv per m}, and thus \eqref{basic stationary main app}; as a consequence the quantity
  \[
\frac{\int_{\pa^*E}\Div^K\,X_k\,d\H^n+2\,\int_{K\setminus\pa^*E}\Div^K\,X_k\,d\H^n}{\int_{\pa^*E}X_k\cdot\nu_E\,d\H^n}
  \]
  has the same value for $k=1,2$.

\bibliographystyle{alpha}
\bibliography{references_mod}
\end{document}